\documentclass[11pt]{amsart}
\usepackage{amscd,amssymb}
\usepackage[all]{xy}
\usepackage{graphicx,calrsfs,verbatim} 
\usepackage[colorlinks,plainpages,backref,urlcolor=blue,breaklinks=true]{hyperref}
\usepackage{bbm}
\usepackage{mathrsfs}
\usepackage{xstring}
\usepackage{tikz}
\usetikzlibrary{matrix,arrows,decorations.pathmorphing,calc,shapes}
\usepackage{array}
\usepackage{breakurl}

 \newtheorem{theorem}{Theorem}[section]
\newtheorem{corollary}[theorem]{Corollary}
\newtheorem{lemma}[theorem]{Lemma}
\newtheorem{prop}[theorem]{Proposition}

\theoremstyle{definition}

\newtheorem{example}[theorem]{Example}
\newtheorem{remark}[theorem]{Remark}

\newtheorem{question}[theorem]{Question}

\DeclareMathOperator{\im}{im}
\DeclareMathOperator{\initial}{in}
\DeclareMathOperator{\id}{id}

\DeclareMathOperator{\Ann}{Ann}
\DeclareMathOperator{\rank}{rank}
\DeclareMathOperator{\Ext}{Ext}

\DeclareMathOperator{\ab}{ab}

\DeclareMathOperator{\gr}{gr}
\DeclareMathOperator{\Sym}{Sym}
\DeclareMathOperator{\Hilb}{Hilb} 
\DeclareMathOperator{\Poin}{Poin}

\DeclareMathOperator{\Tr}{Tr}
\DeclareMathOperator{\QTr}{QTr}

\DeclareMathOperator{\Conf}{Conf}

\newcommand{\R}{\mathbb{R}}

\newcommand{\C}{\mathbb{C}}
\newcommand{\Z}{\mathbb{Z}}

\newcommand{\bG}{\mathbf{G}}

\newcommand{\fB}{\mathfrak{B}}
\newcommand{\fC}{\mathfrak{C}}

\newcommand{\fh}{\mathfrak{h}}
\newcommand{\fg}{\mathfrak{g}}
\newcommand{\g}{\mathfrak{g}}

\newcommand{\fm}{\mathfrak{m}}
\newcommand{\fL}{\mathfrak{L}}

\newcommand{\h}{\mathfrak{h}}

\newcommand{\bL}{\mathbf{L}}

\newcommand{\cA}{\mathcal{A}}

\newcommand{\cR}{\mathcal{R}}

\newcommand{\vB}{vB}
\newcommand{\vP}{vP}

\newcommand{\PV}{vP}

\newcommand{\surj}{\twoheadrightarrow}
\newcommand{\inj}{\hookrightarrow}

\def\dot{\mathchar"013A"}  
\newcommand{\hdot}{{\raise1pt\hbox to 0.35em{\Huge $\dot$}}}

\topmargin=0.1in
\textwidth5.9in
\textheight7.75in
\oddsidemargin=0.3in
\evensidemargin=0.3in

\begin{document}


\title[Pure virtual braids, resonance, and formality]%
{Pure virtual braids, resonance, and formality}

\author[Alexander~I.~Suciu]{Alexander~I.~Suciu$^1$}
\address{Department of Mathematics,
Northeastern University,
Boston, MA 02115, USA}
\email{\href{mailto:a.suciu@neu.edu}{a.suciu@neu.edu}}
\urladdr{\href{http://www.northeastern.edu/suciu/}%
{www.northeastern.edu/suciu/}}
\thanks{$^1$Supported in part by the National Security 
Agency (grant H98230-13-1-0225) and the Simons Foundation 
(collaboration grant for mathematicians 354156)}

\author{He Wang}
\address{Department of Mathematics,
Northeastern University,
Boston, MA 02115, USA}
\curraddr{Department of Mathematics and Statistics,
University of Nevada, Reno, NV 89557, USA}
\email{\href{mailto:wanghemath@gmail.com}%
{wanghemath@gmail.com}}
\urladdr{\href{http://wolfweb.unr.edu/homepage/hew/}%
{http://wolfweb.unr.edu/homepage/hew/}}

\subjclass[2010]{Primary
20F36. 
Secondary
16S37,  
20F14,  
20F40,  
20J05,  
55P62,  
57M07.  
}

\keywords{Pure virtual braid groups, lower central series, Chen ranks,  
Alexander invariants, resonance varieties, holonomy Lie algebra, 
$1$-formality, graded-formality, filtered-formality.}

\begin{abstract}
We investigate the resonance varieties, lower central series ranks, 
and Chen ranks of the pure virtual braid groups and their upper-triangular 
subgroups.   As an application, we give a complete answer to the 
$1$-formality question for this class of groups.  In the process, 
we explore  various connections between the Alexander-type 
invariants of a finitely generated group and several of the graded Lie algebras 
associated to it, and discuss possible extensions of the resonance-Chen 
ranks formula in this context.
\end{abstract}

\maketitle
\setcounter{tocdepth}{1}
\tableofcontents

\section{Introduction}
\label{sect:intro}

\subsection{Background}
\label{intro:pvb}
Virtual knot theory, as introduced by Kauffman in \cite{Kauffman99}, 
is an extension of classical knot theory. This new theory studies embeddings 
of knots in thickened surfaces of arbitrary genus, while the classical 
theory studies the embeddings of circles in thickened spheres. 
Another motivation comes from the representation of knots by 
Gauss diagrams.  In \cite{Goussarov-P-V00}, Goussarov, Polyak, and Viro
showed that the usual knot theory embeds into virtual knot theory, 
by realizing any Gauss diagram by a virtual knot.
Many knot invariants, such as knot groups, the bracket polynomial, 
and finite-type Vassiliev invariants can be extended to invariants 
of virtual knots,  see \cite{Kauffman99, Goussarov-P-V00}.

The virtual braid groups $\vB_n$ were introduced in \cite{Kauffman99} and
further studied in \cite{Bar-Natan14,Bardakov-B14, Kauffman-L07, 
Bardakov04,Kamada07}. As shown by Kamada in \cite{Kamada07},  
any virtual link can be constructed as the closure of a virtual braid,
which is unique up to certain Reidemeister-type moves.
In this paper, we will be mostly interested in the kernel of the canonical 
epimorphism $\vB_n\to S_n$, called the {\em pure virtual 
braid group}, $vP_n$, and a certain subgroup of this group, $\vP_n^+$, 
which we call the {\em upper pure virtual braid group}.

In \cite{Bartholdi-E-E-R}, Bartholdi, Enriquez, Etingof, and Rains 
independently defined and studied the group $\PV_n$, which they 
called the $n$-th quasitriangular groups $\QTr_n$, as a group-theoretic 
version of the set of solutions to the Yang--Baxter equations. 
Their work was developed in a deep way by P.~Lee in \cite{Lee}.  
The authors of \cite{Bartholdi-E-E-R, Lee} construct a 
classifying space for $\vP_n$ with finitely many cells, and find a presentation 
for the cohomology algebra of $\vP_n$, which they show is a Koszul algebra. 
They also obtain parallel results for a quotient group of $\vP_n$, called the 
the $n$-th triangular group, which has the same generators as $\vP_n$, 
and one more set of relations. It is readily seen that the triangular group 
$\Tr_n$ is isomorphic to $\vP_n^+$.

We refer to \cite{SW2} for further context on the pure virtual braid groups and 
related subgroups of basis-conjugating automorphisms of free groups.  

\subsection{Presentations and associated graded Lie algebras}
\label{intro:pres} 

Bardakov gave in \cite{Bardakov04} a presentation for the pure virtual braid group
$\vP_n$, much simpler than the usual presentation of the pure braid group $P_n$.
As shown in \cite{Bartholdi-E-E-R}, there exists a monomorphism from $P_n$ to $\vP_n$.
Moreover, there are split injections 
$\PV_n\rightarrow \PV_{n+1}$, $\PV_n^+\rightarrow \PV_{n+1}^+$ and 
$\PV_n^+\rightarrow \PV_{n}$. 

The pure braid group $P_n$ has center $\Z$, so there is a decomposition 
$P_n\cong \overline{P}_n\times \Z$. Using a decomposition of $\vP_3$ given 
by Bardakov, Mikhailov, Vershinin, and Wu in \cite{Bardakov-M-V-W}, 
we show  that $\vP_3\cong \overline{P}_4\ast \Z$.   In this context, it is 
worth noting that the center of $\vP_n$ is trivial for $n\geq 2$, and
the center of $\vP_n^+$ is trivial for $n\geq 3$, 
with one possible exception; see Dies and Nicas  \cite{Dies-Nicas14}.

In a different vein, Labute \cite{Labute85} and Anick \cite{Anick87} defined 
the notion of a `mild' presentation for a finitely presented group $G$. 
If the group $G$ admits such a presentation, then a presentation 
for the (complex) associated graded Lie algebra $\gr(G)$ 
can be obtained from the classical Magnus expansion.
In general, though, finding a presentation for this Lie algebra is 
an onerous task.  In \S\ref{subsec:mild}, we prove  the following result.

\begin{prop}
\label{prop:intro}
The pure braid groups $P_n$ and the pure virtual braid groups 
$\vP_n$ and $\vP_n^+$  admit mild presentations if and only $n\leq 3$. 
\end{prop}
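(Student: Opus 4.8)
The engine of the argument is the numerical constraint that mildness places on the lower central series ranks $\phi_k(G)$. Recall from Labute \cite{Labute85} and Anick \cite{Anick87} that if $G$ admits a mild presentation with $g$ generators and relators whose initial forms have degrees $d_1,\dots,d_m$ (all $\ge 2$), then those initial forms are strongly free in the free Lie algebra $\mathfrak{L}$, the Lie algebra $\gr(G)\otimes\Q$ is their quotient, and
\[
\prod_{k\ge 1}\bigl(1-t^k\bigr)^{\phi_k(G)}\;=\;1-g\,t+\sum_{i=1}^{m}t^{d_i}.
\]
The right-hand side is a polynomial with constant term $1$, linear coefficient $-g\le 0$, and, in each degree $k\ge 2$, a non-negative coefficient counting the relators with $d_i=k$. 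Hence the obstruction I exploit: \emph{if $\prod_{k}(1-t^k)^{\phi_k(G)}$ has a negative coefficient in some degree $\ge 2$, then $G$ is not mild.}

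To settle the ``only if'' direction I feed the known ranks into this criterion. For the pure braid group the Kohno--Falk--Randell formula gives $\prod_k(1-t^k)^{\phi_k(P_n)}=\prod_{j=1}^{n-1}(1-jt)$, whose coefficient of $t^k$ is $(-1)^k e_k(1,\dots,n-1)$; for $n\ge 4$ the degree is $\ge 3$ and the $t^3$-coefficient $-e_3(1,\dots,n-1)$ is negative, so $P_n$ is not mild. The group $\vP_n^+$ is an iterated almost-direct product of free groups of ranks $1,\dots,n-1$ (the fibre of $\vP_n^+\to\vP_{n-1}^+$ is free of rank $n-1$, generated by the $\lambda_{in}$, with homologically trivial monodromy), so by Falk--Randell it satisfies the same product formula and the same $t^3$-obstruction applies for $n\ge 4$. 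For $\vP_n$ I invoke the lower central series ranks computed elsewhere in the paper: the resulting product acquires a negative coefficient in some degree $\ge 2$ as soon as $n\ge 4$. Conceptually this is forced by Koszulity: since $H^*(\vP_n;\Q)$ is a Koszul algebra of top degree $n-1$, a quadratic mild presentation would make $\gr(\vP_n)\otimes\Q$ graded-formal, whence $\prod_k(1-t^k)^{\phi_k(\vP_n)}=\Hilb(H^*(\vP_n),-t)$ would be a polynomial of degree $2$, contradicting its degree $n-1\ge 3$.

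For the ``if'' direction I produce mild presentations when $n\le 3$. The cases $n\le 2$ are immediate, the groups being trivial, infinite cyclic, or free. For $n=3$ I use two stable operations. First, an almost-direct product of two free groups $F_a\rtimes F_b$ is mild: by Falk--Randell its associated graded Lie algebra is the semidirect product $\mathfrak{L}(x_1,\dots,x_a)\rtimes\mathfrak{L}(y_1,\dots,y_b)$ of free Lie algebras, so $\gr(\Q[F_a\rtimes F_b])$ has Hilbert series $\bigl((1-at)(1-bt)\bigr)^{-1}$, which is the maximal series compatible with $a+b$ degree-one generators and $ab$ quadratic relations; by Anick's criterion the degree-$2$ initial forms of the conjugation relators are therefore strongly free. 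Second, free product with a free group preserves mildness, since adjoining free generators keeps a strongly free sequence strongly free. It then remains to recognize the decompositions $P_3\cong F_2\times\Z=F_2\rtimes F_1$, $\vP_3^+\cong F_2\rtimes\Z$ as an almost-direct product, and $\vP_3\cong\overline{P}_4\ast\Z$ with $\overline{P}_4\cong F_3\rtimes F_2$; the last arises by pushing the pure braid fibration $P_4\to P_3$ through the central quotient, since the full twist $\zeta_4$ maps to $\zeta_3$, so $\overline{P}_4\to\overline{P}_3=F_2$ has free kernel of rank $3$ with homologically trivial action.

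I expect the main obstacle to lie in the ``if'' direction rather than in the numerology. Establishing the isomorphism $\overline{P}_4\cong F_3\rtimes F_2$ and, crucially, checking that the monodromy of each of these fibrations acts trivially on the homology of the fibre is what pins down the associated graded Lie algebra as a semidirect product of free Lie algebras, so that the Hilbert-series computation and Anick's strong-freeness criterion apply; the higher-order terms of the conjugation relators do not affect mildness, but one must be sure they leave the quadratic initial data intact. On the ``only if'' side, the input I take for granted is the explicit determination of the lower central series ranks of $\vP_n$---equivalently, the structure of its holonomy Lie algebra together with the failure of graded-formality for $n\ge 4$---which is itself one of the paper's substantial computations.
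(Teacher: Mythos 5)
Your overall strategy is reasonable, and in the ``only if'' direction your sign criterion (a mild presentation forces $\prod_{k}(1-t^k)^{\phi_k(G)}=1-g\,t+\sum_i t^{d_i}$, hence non-negative coefficients in all degrees $\ge 2$) is a clean repackaging of the paper's argument, which instead uses Koszul duality to identify this product with $\Poin(G_n,-t)$ and then observes that $b_3(G_n)>0$ for $n\ge 4$. However, there is a concrete factual error that breaks your treatment of $\vP_n^+$ in both directions: $\vP_n^+$ is \emph{not} an iterated almost-direct product of free groups of ranks $1,\dots,n-1$, and in particular $\vP_3^+\not\cong F_2\rtimes\Z$. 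Indeed, $\vP_3^+\cong \Z^2 * \Z$ (see \S\ref{subsec:presentations}), which has Euler characteristic $-1$ and $b_2=1$, whereas an almost-direct product $F_2\rtimes\Z$ has Euler characteristic $0$ and $b_2=2$; more generally $\Poin(\vP_n^+,t)=\sum_i S(n,n-i)t^i$, which is not $\prod_{j=1}^{n-1}(1+jt)$. So the Falk--Randell product formula you invoke for $\phi_k(\vP_n^+)$ is false. The conclusion survives, but only after replacing the input: for $n\ge 4$ one gets $\prod_k(1-t^k)^{\phi_k(\vP_n^+)}=\Poin(\vP_n^+,-t)$ from graded-formality and Koszulity (Theorem \ref{thm:beer lee} and Proposition \ref{prop:kd}), whose $t^3$-coefficient is $-S(n,n-3)<0$; and for $n=3$ one should use the decomposition $\Z^2*\Z=(F_1\times F_1)*F_1$, to which your two stable operations do apply.

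A secondary gap: to certify mildness of an almost-direct product $F_a\rtimes F_b$ you need the \emph{upper} bound $\Hilb(U(\fL(G)),t)\le \bigl((1-at)(1-bt)\bigr)^{-1}$, and neither fact you cite supplies it. The Anick/Golod--Shafarevich inequality gives $\Hilb(U(\fL(G)),t)\ge \bigl(1-(a+b)t+abt^2\bigr)^{-1}$, so the target series is the \emph{minimal} one compatible with the presentation, not the maximal one; and the canonical surjection $\fL(G)\surj\gr(G)$ gives $\Hilb(U(\fL(G)),t)\ge\Hilb(U(\gr(G)),t)$ --- both inequalities point the wrong way. One must either verify directly that $\fL(G)\cong\bL(\C^a)\rtimes\bL(\C^b)$, or argue as the paper does: all relators have weight exactly $2$, so $\fL(G)=\fh(G)$, graded-formality gives $\fh(G)=\gr(G)$, and Koszulity of $U(\gr(G))$ pins down the Hilbert series, whence Anick's criterion \eqref{eq:anick criterion} applies. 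Your remaining decompositions ($P_3\cong F_2\times\Z$, and $\vP_3\cong\overline{P}_4*\Z$ with $\overline{P}_4\cong F_3\rtimes F_2$) are correct and give a genuinely more structural route to the $n=3$ cases than the paper's purely numerical verification.
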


Nevertheless, explicit, quadratic presentations for the associated 
graded Lie algebras of the groups $G_n=P_n$, $\vP_n$, 
and $\vP_n^+$ were given in \cite{Kohno85, Falk-Randell85} 
for $P_n$, and in \cite{Bartholdi-E-E-R, Lee} for $\vP_n$ and $\vP_n^+$.  
These computations also show that, for each of these groups, 
the universal enveloping algebra $U(\gr(G_n))$ is a Koszul algebra.
Using Koszul duality and some combinatorial manipulations, we 
show in \S\ref{subsec:coho ring vpb} 
that the LCS ranks of the groups $G_n$ are given by 
\begin{equation}
\label{eq:phi ranks}
\phi_k(G_n)=\dfrac{1}{k} \sum_{d|k}\mu\left(\frac{k}{d}\right)\left[
\sum_{m_1+2m_2+\cdots+nm_n=d} (-1)^{s_n}  d(m!) 
\prod_{j=1}^{n} \frac{(b_{n,n-j})^{m_j}}{ (m_j) !}  \right],
\end{equation}
where $m_j$ are non-negative integers, $s_n=\sum_{i=1}^{[n/2]}m_{2i}$, 
$m=\sum_{i=1}^{n}m_{i}-1$, and $\mu$ is the M\"{o}bius function,
while $b_{n,j}$ are the (unsigned) Stirling numbers of the first kind 
(for $G_n=P_n$), the Lah numbers (for $G_n=\vP_n$), or the Stirling 
numbers of the second kind (for $G_n=\vP_n^+$).

\subsection{Resonance and formality}
\label{intro:formal}

Consider now a group $G$ admitting a finite-type classifying space 
$K(G,1)$.   The cohomology algebra $H^*(G,\C)$ may be turned  
into a family of cochain complexes parametrized by the affine space    
$H^1(G,\C)$, from which one may define the  {\em resonance 
varieties}\/ of the group, $\cR^i_d(G)$, as the loci where 
the cohomology of those cochain complexes jumps.   
After a brief review of these notions, we study in \S\ref{sec:res} the behavior 
of resonance under product and coproducts, obtaining formulas which 
generalize those from \cite{Papadima-Suciu10P}, see 
Propositions \ref{prop:resProd} and \ref{prop:res}.

The formality and partial formality properties of spaces and groups are 
basic notions in homotopy theory, allowing one to describe the rational 
homotopy type of a simply-connected space, or the tower of nilpotent 
quotients of a group in terms of the rational cohomology ring of the 
object in question.  More concretely, a finitely generated group $G$ 
is said to be {\em $1$-formal}\/ if its Malcev Lie algebra admits a 
quadratic presentation. 
Following \cite{SW1}, we will separate the $1$-formality property 
into graded-formality and filtered-formality.

The work of Bartholdi et al.~\cite{Bartholdi-E-E-R} and Lee \cite{Lee} 
mentioned above shows that the pure virtual braid group $\PV_n$ 
and its subgroup $\PV^+_n$ are graded-formal, for all $n$.
Furthermore, Bartholdi, Enriquez, Etingof, and Rains state 
that the groups $\PV_n$ and $\PV^+_n$ are not $1$-formal for $n\geq 4$, 
and sketch a proof of this claim.  One of the aims of this paper (indeed, 
the original motivation for this work) is to provide a detailed proof of 
this fact.  Our main result, which is proved in \S \ref{sec: Formality}, 
reads as follows.

\begin{theorem}
\label{thm:intro formal}
The groups $\PV_n$ and $\PV^+_n$ are both $1$-formal if $n\le 3$, 
and they are both non-$1$-formal (and thus, not filtered formal) if $n\ge 4$.
\end{theorem}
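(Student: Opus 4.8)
The plan rests on the fact, recalled above, that $\vP_n$ and $\vP_n^+$ are graded-formal for every $n$. Since a finitely generated group is $1$-formal exactly when it is simultaneously graded-formal and filtered-formal, for these groups $1$-formality is \emph{equivalent} to filtered-formality; this is what makes the parenthetical ``(and thus, not filtered formal)'' automatic once non-$1$-formality is established. Moreover, $1$-formality is inherited by retracts, and the split injections $\vP_n\hookrightarrow\vP_{n+1}$ and $\vP_n^+\hookrightarrow\vP_{n+1}^+$ of \S\ref{intro:pres} exhibit $\vP_4$ and $\vP_4^+$ as retracts of $\vP_n$ and $\vP_n^+$ for every $n\ge 4$. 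Hence the negative direction reduces to the two base cases $\vP_4$ and $\vP_4^+$, while the positive direction need only be checked for $n\le 3$.

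For $n\le 3$, I would present each group as assembled from $1$-formal blocks by operations that preserve $1$-formality. The cases $n\le 2$ are immediate, since $\vP_2\cong F_2$ and $\vP_2^+\cong\Z$. For $\vP_3$ I would invoke the isomorphism $\vP_3\cong\overline{P}_4\ast\Z$ from \S\ref{intro:pres}: because $P_4\cong\overline{P}_4\times\Z$ is $1$-formal (the pure braid groups being formal), the central quotient $\overline{P}_4$ is $1$-formal as a retract, and a coproduct of $1$-formal groups is again $1$-formal. A parallel structural description of $\vP_3^+$ disposes of the remaining case.

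For the base case $n=4$ I would contrast the Chen ranks forced by $1$-formality with the genuine ones. Starting from the cohomology ring of \cite{Bartholdi-E-E-R,Lee}, I would compute $\cR^1_1(\vP_4)$, aided by the product and coproduct formulas for resonance (\S\ref{sec:res}, Propositions \ref{prop:resProd} and \ref{prop:res}); the outcome is a union of linear subspaces, whose component dimensions I would record. Since $U(\gr(\vP_4))$ is Koszul, the resonance--Chen ranks formula converts these dimensions into the holonomy Chen ranks $\theta_k(\fh)$. Were $\vP_4$ $1$-formal, these would coincide with the true Chen ranks, because $1$-formality yields the Chen Lie algebra identification $\gr(\vP_4/\vP_4'')\otimes\Q\cong\fh/\fh''$. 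I would then compute the actual $\theta_k(\vP_4)$ from the Alexander invariant read off Bardakov's presentation, and locate a degree $k$ at which it outstrips the holonomy value, thereby ruling out filtered-formality. The identical scheme applied to $\vP_4^+$ completes the negative direction.

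The crux is the computation of the true Chen ranks $\theta_k(\vP_4)$. These are \emph{not} determined by $\gr(\vP_4)$ alone: they depend on the Alexander invariant $\vP_4'/\vP_4''$ regarded as a module over $\Z[H_1(\vP_4)]$, and it is precisely the gap between this module and its infinitesimal (holonomy) counterpart that encodes the failure of filtered-formality. Making that gap visible in a specific degree---rather than merely estimating it---is where the real work lies; I would expect the first discrepancy to surface in low degree (around $k=4$), which both pins down the obstruction and, through the retractions above, propagates non-$1$-formality to all $n\ge 4$.
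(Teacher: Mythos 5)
Your positive direction ($n\le 3$) and the reduction of the negative direction to a base case via split injections both match the paper (which in fact needs only the single base case $\vP_4^+$, since $\vP_n^+$ is a retract of $\vP_n$ by Corollary \ref{cor:split}). The genuine gap is in your treatment of the base case. The obstruction you propose --- a discrepancy between the true Chen ranks $\theta_k(G)$ and the holonomy Chen ranks $\theta_k(\fh(G))$ --- simply does not exist for these groups: Proposition \ref{prop:ChenranksvP} computes that $\theta_k(\vP_4^+)=\theta_k(\fh(\vP_4^+))$ for \emph{all} $k$, so the Chen ranks are blind to the failure of filtered-formality here (this is precisely what motivates Question \ref{que:eq}). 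Two intermediate steps are also unsound. First, converting resonance component dimensions into Chen ranks via the Chen ranks formula requires the isotropicity, disjointness, and reducedness hypotheses of Theorem \ref{thm:Chenrankformula}, not merely Koszulness of $U(\gr(G))$; Proposition \ref{prop:chenvp3+} and Remark \ref{rem:isotropicity} show these groups violate isotropicity and fail the formula even when they \emph{are} $1$-formal ($n=3$). Second, your expectation that $\cR^1_1$ decomposes into linear subspaces whose dimensions you can ``record'' is false for $\vP_4^+$.

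That last falsity is, in fact, the whole proof. Lemma \ref{lem:resonancePV4+} shows that $\cR^1_1(\vP_4^+)$ is an \emph{irreducible}, non-linear, $4$-dimensional subvariety of degree $6$ in $H^1(\vP_4^+,\C)=\C^6$. By the Tangent Cone Theorem (Theorem \ref{thm:tangentcone}), every irreducible component of a degree-one resonance variety of a $1$-formal group is a rationally defined linear subspace; hence $\vP_4^+$ is not $1$-formal, with no Alexander invariant or Chen rank computation needed. Once this is in hand, your retraction argument propagates non-$1$-formality to all $\vP_n^+$ and $\vP_n$ with $n\ge 4$, and graded-formality converts non-$1$-formality into non-filtered-formality, exactly as you say.
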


The $1$-formality property of groups is preserved under split injections 
and (co)products, see \cite{SW1, DPS}.  
Consequently, the fact that we have split injections between the various 
pure virtual braid groups allows us to reduce the proof of 
Theorem \ref{thm:intro formal} to verifying the $1$-formality 
of $\vP_3$ and the non-$1$-formality of $\vP_4^+$.
To prove the first statement, we use the free product decomposition 
$\vP_3\cong \Z\ast \overline{P}_4$.  For the second statement, 
we compute the resonance variety $\cR^1_1(\PV_4^+)$, and use 
the geometry of this variety, together 
with the Tangent Cone Theorem from \cite{DPS} 
to reach the desired conclusion.

\subsection{Chen ranks and Alexander invariants}
\label{intro:alexinv}

Given a finitely generated group $G$, we let 
$\theta_k(G)=\dim \gr_k(G/G^{\prime\prime})$ be the LCS ranks of the maximal 
metabelian quotient of $G$.  These ranks were first studied 
by K.-T. Chen in \cite{Chen51}, and are named after him. In \S\ref{sec:Chen}, 
we give the generating functions for the Chen ranks of the free groups $F_n$
and the pure braid groups $P_n$, based on computations from 
\cite{Chen51, Cohen-Suciu95}. 

As shown by W. Massey in \cite{Massey80}, the Chen ranks can be computed 
from the Alexander invariant, $B(G)=G^{\prime}/G^{\prime\prime}$.  More precisely, 
if we view this abelian group as a module over $\C[H]$, where $H=G/G'$, then 
filter it by powers of the augmentation ideal, and  take the 
associated graded module, $\gr(B(G))$, viewed as a module over 
the symmetric algebra $S=\Sym(H\otimes \C)$, we have that 
$\theta_{k+2}(G)=\dim \gr_{k}(B(G))$ for all $k\geq 0$.  

In a similar fashion, we define the Chen ranks of a finitely generated, 
graded Lie algebra $\fg$ to be $\theta_k(\fg)=\dim (\fg/\fg'')_k$, and
the infinitesimal Alexander invariant of $\fg$ to be the graded 
$\Sym(\fg_1)$-module $\fB(\fg)=\fg^{\prime}/\fg^{\prime\prime}$, 
after which we show that $\theta_{k+2}(\fg)=\dim \fB(\fg)_k$ for all $k\geq 0$.  

Our next main result (a combination of Propositions \ref{prop:alexinv compare}, 
\ref{prop:gamma compare}, and \ref{thm:gradedinf}), relates the various Alexander-type 
invariants associated to a group, as follows.

\begin{theorem}
\label{thm:compare}
Let $G$ be a finitely generated group with abelianization $H$, 
and set $S=\Sym(H\otimes \C)$.  There exists then surjective morphisms 
of graded $S$-modules,
\begin{equation}
\label{eq:surj bees}
\xymatrix{ \fB(\fh(G)) \ar@{->>}[r]^{\psi} &\fB(\gr(G)) \ar@{->>}[r]^{\varphi} &\gr(B(G))}.
\end{equation} 
Moreover, if $G$ is graded-formal, then $\psi$ is an isomorphism, and 
if $G$ is filtered-formal, then $\varphi$ is an isomorphism.
\end{theorem}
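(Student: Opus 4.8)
The plan is to construct both maps from the functoriality of the infinitesimal Alexander invariant and then upgrade each to an isomorphism under the corresponding formality hypothesis. For $\psi$ I would begin with the canonical surjection of graded Lie algebras $\fh(G) \surj \gr(G)$, which is an isomorphism in degree $1$ since both sides equal $H\otimes\C$ there. A surjection $p\colon \fg \surj \fk$ of graded Lie algebras carries $\fg'$ onto $\fk'$ and $\fg''$ onto $\fk''$, hence induces a surjection $\fB(\fg)=\fg'/\fg'' \surj \fk'/\fk''=\fB(\fk)$; because $p$ is an isomorphism in degree $1$, the algebras $\Sym(\fg_1)$ and $\Sym(\fk_1)$ are both identified with $S$, and the induced map is $S$-linear, the module structure being degree-$1$ bracketing, which $p$ respects. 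Applying this to $\fh(G)\surj\gr(G)$ produces $\psi$. If $G$ is graded-formal, the canonical map $\fh(G)\to\gr(G)$ is by definition an isomorphism, so $\psi=\fB(\text{iso})$ is an isomorphism.

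For $\varphi$ I would pass through the metabelian quotient $M=G/G''$. First, $\gr(M)$ is metabelian: a double bracket in $\gr(M)$ is represented by an iterated commutator landing in $M''=1$, so $(\gr M)''=0$. Thus the surjection $\gr(G)\surj\gr(M)$ annihilates $(\gr G)''$ and factors through $(\gr G)/(\gr G)'' \surj \gr(M)$. Restricting this surjection to derived subalgebras gives $\fB(\gr G)=(\gr G)'/(\gr G)'' \surj (\gr M)'$. Since $\gr(M)$ is generated in degree $1$, one has $(\gr M)'=\bigoplus_{k\ge 2}(\gr M)_k$, which coincides with the associated graded of $M'=G'/G''=B(G)$ for the lower central series filtration. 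Finally I would identify this filtration with Massey's filtration by powers of the augmentation ideal $I$: the action of $\bar h-1$ on $B(G)$ is the commutator $[h,-]$, which raises lower central series degree by one, so $I^j B(G)$ is exactly the image of $\gamma_{j+2}(G)$. This matches the two filtrations up to the shift by $2$ and shows the $S$-actions agree, yielding the surjective $S$-module map $\varphi\colon\fB(\gr G)\surj\gr(B(G))$.

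To promote $\varphi$ to an isomorphism under filtered-formality, I would compare dimensions in each degree using the two Massey-type identities already available, namely $\dim\fB(\gr G)_k=\theta_{k+2}(\gr G)$ and $\dim\gr_k B(G)=\theta_{k+2}(G)$. Being a degreewise surjection of finite-dimensional spaces, $\varphi$ is an isomorphism precisely when $\theta_k(\gr G)=\theta_k(G)$ for all $k$, equivalently when the surjection $(\gr G)/(\gr G)''\surj\gr(M)$ of the previous paragraph is itself an isomorphism. The content of filtered-formality is that the Malcev Lie algebra $\fm(G)$ is filtered-isomorphic to the completion $\widehat{\gr(G)}$; I would use this to identify the Chen Lie algebra $\gr(G/G'')$ with the metabelianization $(\gr G)/(\gr G)''$, forcing the Chen ranks to agree and hence $\varphi$ to be an isomorphism.

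I expect the main obstacle to be exactly this last implication: translating the filtered isomorphism $\fm(G)\cong\widehat{\gr(G)}$ into the assertion that metabelianization commutes with passage to the associated graded, that is, $\gr(G/G'')\cong(\gr G)/(\gr G)''$. This requires tracking completions carefully and verifying that the second derived subgroup $G''$ and the second derived subalgebra $(\gr G)''$ correspond under the filtered isomorphism, a compatibility that is built into the full Malcev/associated-graded correspondence but must be checked to survive the restriction to the metabelian quotient and its derived subobject.
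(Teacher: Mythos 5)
Your proposal is correct and follows essentially the same route as the paper: $\psi$ is induced by the canonical surjection $\fh(G)\surj\gr(G)$, and $\varphi$ is obtained by factoring through the Chen Lie algebra of $G/G''$ and identifying the $I$-adic filtration on $B(G)$ with the (shifted) lower central series filtration via Massey's isomorphism, exactly as in Propositions \ref{prop:alexinv compare}, \ref{prop:gamma compare}, and \ref{thm:gradedinf}. The step you single out as the main obstacle --- that filtered-formality forces $\gr(G)/\gr(G)''\cong\gr(G/G'')$ --- is precisely Theorem \ref{thm:sw1}, which the paper imports from \cite[Theorem 8.4]{SW1} rather than reproving, so your reduction lands exactly on the citation the paper relies on.
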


This result yields the following inequalities between the various types of 
Chen ranks associated to a finitely generated  group $G$:
\begin{equation}
\label{eq:theta ineq}
\theta_k(\fh(G)) \geq \theta_k(\gr(G)) \geq \theta_k(G), 
\end{equation}
with  the first inequality holding as equality if $G$ is graded-formal, 
and the second inequality holding as equality if $G$ is filtered-formal.

\subsection{The Chen ranks formula}
\label{intro:res chen}

Now suppose $G$ is a finitely presented, commutator-relators 
group.  Then, as shown in \cite{Papadima-Suciu04}, the infinitesimal 
Alexander invariant $\fB(\fh(G))$ coincides with the `linearization' of the Alexander 
invariant $B(G)$.  Furthermore, as shown in \cite{Matei-Suciu00}, the 
resonance variety $\cR_1^1(G)$ coincides, away from the origin $0\in H^1(G;\C)$, 
with the support variety of the annihilator of $\fB(\fh(G))$.

Under certain conditions, the \emph{Chen ranks formula}\/ reveals a close 
relationship between the first resonance variety of $G$ and the Chen ranks,
namely, 
\begin{equation}
\label{eq:cranks}
\theta_k(G)= \sum_{m\geq 2}h_m(G)\cdot \theta_k(F_m)
\end{equation}
for $k\gg 1$, 
where $h_m(G)$ is the number of $m$-dimensional irreducible 
components of $\cR_1^1(G)$.  This formula was conjectured in \cite{Suciu01} 
for arrangement groups and proved by Cohen and Schenck in \cite{Cohen-Schenck15} 
for $1$-formal groups satisfying certain restrictions on their resonance components. 

In \S\ref{sec:Chen}, we analyze the Chen ranks formula in a wider setting, 
with a view towards comparing the Chen ranks and the resonance varieties 
of the pure virtual braid groups.   We start by noting that formula \eqref{eq:cranks} 
may hold even for non-$1$-formal groups, such as the fundamental groups of 
complements of suitably chosen arrangements of planes in $\R^4$.  

Next, we look at the way the Chen ranks formula behaves well with respect 
to products and coproducts of groups.  The conclusion may be summarized as follows.  

\begin{prop}
\label{prop:introproductChenformula}
If both $G_1$ and $G_2$ satisfy the Chen ranks formula \eqref{eq:cranks},
then $G_1\times G_2$ also satisfies the Chen ranks formula, but 
$G_1* G_2$ may not.
\end{prop}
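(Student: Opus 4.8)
The plan is to handle the two halves separately: the product assertion follows from additivity of both sides of \eqref{eq:cranks}, while the coproduct assertion is settled by a single counterexample. For the product, I would first observe that the left-hand side is additive. Since $(G_1\times G_2)'=G_1'\times G_2'$ and therefore $(G_1\times G_2)''=G_1''\times G_2''$, the maximal metabelian quotient splits as a direct product $(G_1/G_1'')\times(G_2/G_2'')$; as the lower central series of a direct product is the termwise product of those of the factors, the associated graded Lie algebras add, giving $\theta_k(G_1\times G_2)=\theta_k(G_1)+\theta_k(G_2)$ for all $k$. For the right-hand side, the product formula for resonance (Proposition \ref{prop:resProd}) gives $\cR^1_1(G_1\times G_2)=\bigl(\cR^1_1(G_1)\times\{0\}\bigr)\cup\bigl(\{0\}\times\cR^1_1(G_2)\bigr)$ inside $H^1(G_1;\C)\oplus H^1(G_2;\C)$. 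The two pieces lie in complementary coordinate subspaces meeting only at $0$, so no positive-dimensional component of one can merge with a component of the other, whence $h_m(G_1\times G_2)=h_m(G_1)+h_m(G_2)$ for every $m\ge2$. Adding \eqref{eq:cranks} for $G_1$ and $G_2$ then produces \eqref{eq:cranks} for $G_1\times G_2$ in the range $k\gg1$.

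For the coproduct, I would exhibit the counterexample $G=\Z\ast\Z^2$. Both factors are abelian and hence $1$-formal, their resonance varieties reduce to the origin, and all their Chen ranks $\theta_k$ with $k\ge2$ vanish, so each satisfies \eqref{eq:cranks} trivially. In a free product the cross-terms of the cup product $H^1(G_1;\C)\otimes H^1(G_2;\C)\to H^2$ vanish; a direct check (or Proposition \ref{prop:res}) then shows that $\cR^1_1(\Z\ast\Z^2)$ is the entire space $\C^3$, which is irreducible of dimension $3$. Consequently the right-hand side of \eqref{eq:cranks} equals $h_3\cdot\theta_k(F_3)=\theta_k(F_3)$, exactly as if $G$ were the free group $F_3$.

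It remains to compute the left-hand side and see it disagrees. As a free product of $1$-formal groups, $G$ is $1$-formal, so by Theorem \ref{thm:compare} and Massey's formula the Chen ranks are governed by the infinitesimal Alexander invariant: $\theta_{k+2}(G)=\dim\fB(\fh(G))_k$. The holonomy Lie algebra of a free product is the coproduct of the holonomy Lie algebras, whence $\fh(\Z\ast\Z^2)=\mathrm{Lie}(x_1,x_2,x_3)/([x_2,x_3])$, and its infinitesimal Alexander invariant is the quotient of $\fB(F_3)$ by the cyclic $S$-submodule generated by the class of $[x_2,x_3]$, where $S=\C[x_1,x_2,x_3]$. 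The one step requiring real care—and the main obstacle of the argument—is pinning down this submodule: using the Koszul presentation $\fB(F_3)=\coker(\Lambda^3\C^3\otimes S\to\Lambda^2\C^3\otimes S)$, one checks that the annihilator of $[x_2,x_3]$ is zero, so the submodule is free of rank one with Hilbert function $\binom{k+2}{2}$. Subtracting from $\dim\fB(F_3)_k=(k+1)(k+3)$ gives $\theta_{k+2}(\Z\ast\Z^2)=(k+1)(k+4)/2$, that is, $\theta_k=(k-1)(k+2)/2$ for $k\ge2$. This is strictly smaller than $\theta_k(F_3)=(k-1)(k+1)$ for every $k\ge2$, and grows like $k^2/2$ rather than $k^2$; hence \eqref{eq:cranks} fails for $\Z\ast\Z^2$, establishing the second assertion. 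Every other step is structural, so this Hilbert-function computation is the only genuine calculation in the proof.
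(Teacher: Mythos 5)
Your proposal is correct and follows essentially the same route as the paper: the product half is exactly the paper's argument (additivity of $\theta_k$ for direct products plus additivity of $h_m$ via Proposition \ref{prop:resProd}), and your counterexample $\Z*\Z^2$ is the $n=3$ case of the paper's family $\Z*\Z^{n-1}$, with $\cR^1_1=\C^3$ forcing the predicted answer $\theta_k(F_3)$. The only difference is that the paper obtains $\theta_k(\Z*\Z^{n-1})$ by citing the right-angled Artin group formula of \cite{PS-mathann}, whereas you compute $\theta_k(\Z*\Z^2)=(k-1)(k+2)/2$ directly from the infinitesimal Alexander invariant (using $1$-formality and the freeness of the submodule generated by $[x_2,x_3]$ in $\fB(F_3)$); your value agrees with the paper's generating function, so the self-contained calculation is a valid substitute for the citation.
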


Finally, we analyze the Chen ranks and the degree $1$ resonance 
varieties of the groups $\PV_n$ and $\PV^+_n$.  
In a preprint version of \cite{Bardakov-M-V-W},  
Bardakov et al.~compute some of the 
degree $1$ and $2$ resonance varieties of $\PV_3$. 
Using the decomposition $\vP_3\cong \Z\ast \overline{P}_4$, 
we compute all the resonance varieties of  this group.  We also compute 
the ranks $\theta_k(\vP_3)$, and show that formula \eqref{eq:cranks} 
does not hold in this case, although $vP_3$ satisfies all but one of the 
hypothesis of \cite[Theorem A]{Cohen-Schenck15}.  
Likewise, we compute the Chen ranks and degree $1$ resonance 
varieties of the groups $\vP_n^+$ for low values of $n$, and 
conclude that $\vP_4^+$ and $\vP_5^+$ also do not 
satisfy the Chen ranks formula. 

We would like to thank the referee for helpful comments and suggestions. 

\section{Pure virtual braid groups}
\label{sect:triangular}

In this section, we look at the pure virtual braid groups and their 
upper-triangular subgroups from the point view of combinatorial 
group theory.

\subsection{Presentations}
\label{subsec:presentations}

Let $P_n$ be the Artin pure braid group on $n$ strings.  
As is well-known, the center of $P_n$ ($n\ge 2$) is infinite cyclic, and so 
we have a direct product decomposition of the form 
$P_n\cong \overline{P}_n\times \Z$.  
The first few groups in this series are easy to describe: 
$P_1=\{1\}$, $P_2=\Z$, and $P_3\cong F_2\times \Z$, 
where $F_n$ denotes the free group on $n$ generators.

As shown in \cite{Bardakov04}, the \emph{pure virtual braid group}\/ $\PV_n$ 
has presentation with generators
$x_{ij}$ with $1\leq i\neq j \leq n$ (see Figure \ref{fig:purebraids} for a  
description of the corresponding virtual braids), and relations  
\begin{align} 
&x_{ij}x_{ik}x_{jk}=x_{jk}x_{ik}x_{ij} &&\text{for distinct $i,j,k$}, \\  \notag
&x_{ij}x_{kl}=x_{kl}x_{ij} &&\text{for distinct $i,j,k,l$}.
\end{align}

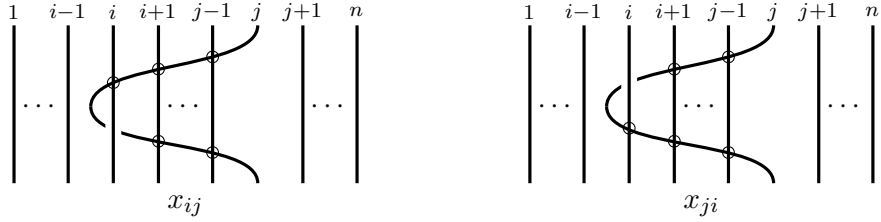
\begin{figure}[t]
\centering
\begin{tikzpicture}[scale=0.6]
\draw (1, 0.3) node {$_{1}$};
\draw (2.2, 0.3) node {$_{i-1}$};
\draw (3.2, 0.3) node {$_{i}$};
\draw (4.2, 0.3) node {$_{i+1}$};
\draw (5.4, 0.3) node {$_{j-1}$};
\draw (6.4, 0.3) node {$_{j}$};
\draw (7.4, 0.3) node {$_{j+1}$};
\draw (8.6, 0.3) node {$_{n}$};
\draw (1.6, -1.8) node {$\cdots$};
\draw (8, -1.8) node {$\cdots$};
\draw (4.8, -1.8) node {$\cdots$};
 
\draw[ very thick] (1,0) -- (1,-3.5);
\draw[ very thick] (2.2,0) -- (2.2,-3.5);
 
\draw[very thick] (2.7,-1.8) .. controls +(0,-1) and +(0,1) .. (6.4,-3.5);

\draw[white,   line width=6pt] (3.2,0) -- (3.2,-3.5);
\draw[ very thick] (3.2,0) -- (3.2,-3.5);
\draw[very thick] (6.4,0) .. controls +(0,-1) and +(0,1) .. (2.7,-1.8);

\draw[ very thick] (4.2,0) -- (4.2,-3.5);
\draw[ very thick] (5.4,0) -- (5.4,-3.5);
\draw[ very thick] (7.4,0) -- (7.4,-3.5);
\draw[ very thick] (8.6,0) -- (8.6,-3.5);

\draw  (5.4,-0.7) circle (0.13cm);
\draw  (4.2,-0.97) circle (0.13cm);
\draw  (5.4,-2.82) circle (0.13cm);
\draw  (4.2,-2.57) circle (0.13cm);
\draw  (3.2,-1.27) circle (0.13cm);
\draw (4.8, -4) node {$x_{ij}$};
\end{tikzpicture}
\hspace{1.6cm}
\begin{tikzpicture}[scale=0.6]
\draw (1, 0.3) node {$_{1}$};
\draw (2.2, 0.3) node {$_{i-1}$};
\draw (3.2, 0.3) node {$_{i}$};
\draw (4.2, 0.3) node {$_{i+1}$};
\draw (5.4, 0.3) node {$_{j-1}$};
\draw (6.4, 0.3) node {$_{j}$};
\draw (7.4, 0.3) node {$_{j+1}$};
\draw (8.6, 0.3) node {$_{n}$};
\draw (1.6, -1.8) node {$\cdots$};
\draw (8, -1.8) node {$\cdots$};
\draw (4.8, -1.8) node {$\cdots$};

\draw[very thick] (6.4,0) .. controls +(0,-1) and +(0,1) .. (2.7,-1.8); 
\draw[ very thick] (1,0) -- (1,-3.5);
\draw[ very thick] (2.2,0) -- (2.2,-3.5);

\draw[white,   line width=6pt] (3.2,0) -- (3.2,-3.5);
\draw[ very thick] (3.2,0) -- (3.2,-3.5);
\draw[very thick] (2.7,-1.8) .. controls +(0,-1) and +(0,1) .. (6.4,-3.5);

\draw[ very thick] (4.2,0) -- (4.2,-3.5);
\draw[ very thick] (5.4,0) -- (5.4,-3.5);
\draw[ very thick] (7.4,0) -- (7.4,-3.5);
\draw[ very thick] (8.6,0) -- (8.6,-3.5);

\draw  (5.4,-0.7) circle (0.13cm);
\draw  (4.2,-0.97) circle (0.13cm);
\draw  (5.4,-2.82) circle (0.13cm);
\draw  (4.2,-2.57) circle (0.13cm);
\draw  (3.2,-2.28) circle (0.13cm);
\draw (4.8, -4) node {$x_{ji}$};
\end{tikzpicture}
\caption{ The virtual pure braids $x_{ij}$ and $x_{ji}$ for $i<j$. \label{fig:purebraids}}
\end{figure} 

The {\em upper-triangular pure virtual braid group}\/, $\PV_n^+$, 
is the subgroup of $\PV_n$ generated by those elements $x_{ij}$ 
with $1\leq i<j \leq n$.  Its defining relations are  
\begin{align} 
&x_{ij}x_{ik}x_{jk}=x_{jk}x_{ik}x_{ij} &&\text{for $i<j<k$}, \\ \notag
&x_{ij}x_{kl}=x_{kl}x_{ij} &&\text{for $i\neq j\neq k\neq l$, $i<j$, and $k<l$}.
\end{align}

Of course, both $\PV_1$ and $\PV_1^+$ are the trivial group. 
It is readily seen that $\PV_2^+=\Z$, while 
$\PV_3^+\cong \Z\ast \Z^2$.  Likewise, $\PV_2$ is 
isomorphic to $F_2$.

\subsection{Split monomorphisms}

In \cite{Bartholdi-E-E-R}, the group $\PV_n$ is called the {\em quasi-triangular 
group}, and is denoted by $\QTr_n$, while the quotient group of $\QTr_n$ by the 
relations of the form $x_{ij}=x_{ji}$ for $i\neq j$ is called the {\em triangular 
group}, and is denoted by $\Tr_n$.

\begin{lemma}
\label{lem:iso}
The group $\Tr_n$ is isomorphic to $\PV_n^+$.
\end{lemma}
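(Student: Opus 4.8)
The plan is to prove the isomorphism by comparing presentations: I will show that the defining presentation of $\vP_n^+$ recorded above is obtained from a presentation of $\Tr_n$ by Tietze transformations. By definition $\Tr_n$ is the quotient of $\QTr_n=\vP_n$ by the symmetrization relations, so it has a presentation with generators $x_{ij}$ (all $i\neq j$) and relators those of $\vP_n$ together with the symmetrization relators relating $x_{ij}$ and $x_{ji}$. The first step is to use these symmetrization relators to eliminate, for each pair with $i>j$, the generator $x_{ij}$ in favor of the generator attached to the increasing pair. After this elimination the generating set is exactly $\{x_{ij}:i<j\}$, which is the generating set of $\vP_n^+$; the candidate isomorphism is then $x_{ij}\mapsto x_{ij}$ for $i<j$, whose inverse is visibly the composite $\vP_n^+\inj\vP_n\surj\Tr_n$ (this composite is a well-defined homomorphism with no relation-checking required, since it is an inclusion followed by a projection).

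It remains to track what the relators of $\vP_n$ become under this substitution, and to match them with the relators of $\vP_n^+$. The far-commutativity relators $x_{ij}x_{kl}=x_{kl}x_{ij}$ present no difficulty: for disjoint pairs they turn into commutation relations between generators indexed by disjoint increasing pairs, and selecting the increasing representatives reproduces exactly the far-commutativity relations of $\vP_n^+$, the remaining orderings being automatic consequences (if $x$ commutes with $y$, then so does $x^{-1}$). The quasitriangular relators are the substantial part: for a fixed triple $\{i,j,k\}$ the relators coming from the several orderings, once rewritten through the elimination substitution, must all reduce to the single upper-triangular relation $x_{ij}x_{ik}x_{jk}=x_{jk}x_{ik}x_{ij}$ with $i<j<k$. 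I would carry this out by rewriting each of the remaining per-triple relators and checking, by direct group-theoretic manipulation, that it is a consequence of the chosen one.

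This collapse of the several per-triple relations to a single one is the step I expect to be the main obstacle: a priori the different orderings impose genuinely different-looking relations among the three generators attached to the triple, and the content of the lemma is precisely that the symmetrization forces them to coincide up to consequence. The saving feature is that the computation is entirely local to one triple and is performed once and for all; for general $n$ the far-commutativity relators guarantee that generators belonging to different triples commute past one another, so that no global interference arises and it suffices to combine the triple-local check above with the analogous, routine quadruple-local check for far-commutativity. Once every relator of the eliminated presentation is identified with a relator of $\vP_n^+$, and conversely, the two presentations coincide and the lemma follows; in particular this is consistent with the already-recorded low-rank cases, where $\Tr_2\cong\vP_2^+=\Z$ and $\Tr_3\cong\vP_3^+\cong\Z\ast\Z^2$.
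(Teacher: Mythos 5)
Your strategy---eliminate the generators $x_{ij}$ with $i>j$ via the symmetrization relations and then match the surviving relators against those of $\vP_n^+$---is essentially the same reduction the paper performs with its pair of mutually inverse homomorphisms $\phi,\psi$ defined on generators, and, like the paper, you leave the decisive verification to the reader. But the step you yourself single out as ``the main obstacle'' is not merely laborious: as you have set it up, it fails. Fix a triple $i<j<k$ and write $u=x_{ij}$, $v=x_{ik}$, $w=x_{jk}$. After eliminating the lower-triangular generators, the six relators of $\vP_n$ attached to this triple become the three relations $uvw=wvu$, $uwv=vwu$, $vuw=wuv$, and your plan requires the last two to be consequences of the first. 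For $n=3$ there are no far-commutativity relators to help, so this would have to hold already in $\langle u,v,w\mid uvw=wvu\rangle$; it does not. In $S_5$ the elements $u=(1\,2)$, $v=(1\,2\,3)$, $w=(1\,3\,2)(4\,5)$ satisfy $uvw=wvu=(1\,2)(4\,5)$, yet $vuw=(2\,3)(4\,5)$ while $wuv=(1\,3)(4\,5)$. Equivalently, the initial forms of the three relators are $[U,V]+[U,W]+[V,W]$, $[U,V]+[U,W]-[V,W]$ and $-[U,V]+[U,W]+[V,W]$, which span all of $\bigwedge^2\C^3$, so the group presented by all three relations has $\gr_2=0$, whereas $\langle u,v,w\mid uvw=wvu\rangle\cong\Z\ast\Z^2$ has $\dim\gr_2=2$. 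The split injections $\vP_3^+\to\vP_n^+$ show the same failure persists for all $n\ge 3$, so the far-commutativity relations cannot rescue the triple-local check.

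The upshot is that the proof cannot be completed along the announced lines, and the gap is not cosmetic: the collapse of the three per-triple relations to one is exactly where the content of the lemma would have to lie, and it is false at the level of the presentations as recorded. Either one takes as the definition of $\Tr_n$ the presentation with a single relation per triple (one for each $i<j<k$), in which case it coincides verbatim with the recorded presentation of $\vP_n^+$ and there is nothing to prove; or one works with $\vP_n^+$ as a subgroup of $\vP_n$ and must prove by other means that the composite $\vP_n^+\hookrightarrow\vP_n\twoheadrightarrow\Tr_n$ is injective, which no amount of formal manipulation of the listed relators will accomplish. Your one unproblematic observation is that this composite is a well-defined homomorphism; the map in the other direction, which both you and the paper's own two-line argument treat as routine, is precisely where the difficulty sits.
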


\begin{proof}
Let $\phi\colon \Tr_n\rightarrow \PV_n^+$ be the homomorphism defined 
by $\phi(x_{ij})=x_{ij}$ for $i<j$ and $\phi(x_{ij})=x_{ji}$ for $i>j$, 
and let $\psi\colon \PV_n^+ \rightarrow \Tr_n$  be the homomorphism 
defined  by $\psi(x_{ij})=x_{ij}$ for $i<j$. It is easy to show that $\phi$ 
and $\psi$ are well-defined homomorphisms, 
and $\phi\circ\psi= {\rm id}$ and $\psi\circ\phi= {\rm id}$. 
Thus, $\phi$ is an isomorphism.
\end{proof}

\begin{corollary}
\label{cor:split}
The inclusion $j_n\colon \PV_n^+\inj \PV_{n}$ is a split monomorphism. 
\end{corollary}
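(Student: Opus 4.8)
The plan is to produce a group homomorphism $r_n\colon \PV_n \to \PV_n^+$ that restricts to the identity on the subgroup $\PV_n^+$; since any morphism admitting such a left inverse is automatically injective, this will at once establish both that $j_n$ is a monomorphism and that it is split. So the whole task reduces to exhibiting the right retraction.

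The key observation is that $\PV_n^+$ appears not only as a subgroup of $\PV_n$ (via $j_n$), but also as a \emph{quotient} of it. Indeed, by definition the triangular group $\Tr_n$ is the quotient of $\QTr_n = \PV_n$ by the normal closure of the elements $x_{ij}x_{ji}^{-1}$ for $i\neq j$; let $q_n\colon \PV_n \surj \Tr_n$ denote the resulting canonical epimorphism. Composing with the isomorphism $\phi\colon \Tr_n \isom \PV_n^+$ furnished by Lemma \ref{lem:iso}, I set $r_n = \phi\circ q_n\colon \PV_n \to \PV_n^+$. Tracing through generators, $r_n$ sends $x_{ij}\mapsto x_{ij}$ for $i<j$ and $x_{ij}\mapsto x_{ji}$ for $i>j$.

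It then remains only to check that $r_n\circ j_n = \id_{\PV_n^+}$, and since both sides are homomorphisms it suffices to verify this on the generators $x_{ij}$ with $i<j$. For such a generator, $j_n(x_{ij})$ is the corresponding element $x_{ij}\in \PV_n$, whose image under $q_n$ is the class of $x_{ij}$ in $\Tr_n$, which $\phi$ sends back to $x_{ij}\in \PV_n^+$. Hence $r_n\circ j_n$ fixes every generator, so it is the identity, and the corollary follows.

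I expect no genuine obstacle here: the entire content is packaged in the fact—already recorded in Lemma \ref{lem:iso}—that the same generator–relation data presents $\PV_n^+$ both as a subgroup and as a quotient of $\PV_n$. The only point requiring a moment's care is confirming that $r_n$ respects the defining relations of $\PV_n$, but this is automatic because $r_n$ factors as the honest quotient map $q_n$ followed by the isomorphism $\phi$ of Lemma \ref{lem:iso}.
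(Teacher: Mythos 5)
Your proposal is correct and is exactly the paper's argument: the retraction is the quotient map $\PV_n\surj \Tr_n$ followed by the isomorphism $\phi\colon \Tr_n\to \PV_n^+$ of Lemma \ref{lem:iso}. You simply spell out the verification on generators that the paper leaves implicit.
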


\begin{proof}
The split surjection is defined by the composition of the quotient surjection 
$\PV_n \surj \Tr_n$ and the map $\phi\colon \Tr_n\to \PV_n^+$ from Lemma \ref{lem:iso}.
\end{proof}
 
There are several other split monomorphisms between the aforementioned 
groups. 

\begin{lemma}
\label{lem:split}
For each $n\ge 2$, there are split monomorphisms 
$\iota_n\colon \PV_n\rightarrow \PV_{n+1}$ and
$\iota_n^+\colon \PV_n^+\rightarrow \PV_{n+1}^+$.
\end{lemma}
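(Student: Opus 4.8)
The plan is to construct, for each $n\ge 2$, explicit homomorphisms realizing the inclusions and their splittings, and to verify they are well-defined by checking the defining relations. First I would define $\iota_n\colon \PV_n\to \PV_{n+1}$ on generators by $\iota_n(x_{ij})=x_{ij}$ for all $1\le i\ne j\le n$; since the generators $x_{ij}$ with indices in $\{1,\dots,n\}$ form a subset of the generators of $\PV_{n+1}$, and every defining relation of $\PV_n$ (both the braid-type relation for distinct $i,j,k\le n$ and the commuting relation for distinct $i,j,k,l\le n$) is literally one of the defining relations of $\PV_{n+1}$, this map is a well-defined homomorphism. The same recipe defines $\iota_n^+\colon \PV_n^+\to \PV_{n+1}^+$, restricting to generators $x_{ij}$ with $i<j$, and again each relation of $\PV_n^+$ is a relation of $\PV_{n+1}^+$.

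Next I would produce the splittings by writing down retractions $\rho_n\colon \PV_{n+1}\to \PV_n$ (and $\rho_n^+$ in the upper-triangular case) with $\rho_n\circ\iota_n=\id$. The natural candidate is the map that ``forgets the last strand'': send $x_{ij}\mapsto x_{ij}$ whenever $i,j\le n$, and send every generator involving the index $n+1$ to $1$. To see this is well-defined, I would run through each type of relation in $\PV_{n+1}$ and confirm it maps to a valid relation (or to a trivial identity) in $\PV_n$: a commuting relation maps either to a commuting relation of $\PV_n$ or, if an index equals $n+1$, to a trivial identity $1=1$; a braid relation $x_{ij}x_{ik}x_{jk}=x_{jk}x_{ik}x_{ij}$ with all of $i,j,k\le n$ maps to the corresponding relation of $\PV_n$, while if exactly one index equals $n+1$ the relation collapses to something of the form $x_{ij}=x_{ij}$ after killing the two generators that involve $n+1$. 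This case analysis is the technical heart of the argument, and I expect it to be the main obstacle: one must check that in the braid relation exactly the right pair of generators carries the index $n+1$ so that both sides reduce consistently, rather than producing a contradictory relation that would obstruct well-definedness.

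Having verified $\rho_n$ is a homomorphism, the identity $\rho_n\circ\iota_n=\id_{\PV_n}$ is immediate on generators, so $\iota_n$ is a split monomorphism; the upper-triangular statement follows identically using $\rho_n^+$. Alternatively, and perhaps more cleanly, one could invoke the known structure from \cite{Bartholdi-E-E-R, Lee}, where the classifying spaces for the $\PV_n$ are built inductively and the strand-forgetting maps are realized geometrically; but I would prefer the direct combinatorial verification above since it keeps the argument self-contained and makes the splitting explicit. The only genuinely delicate point remains the braid-relation bookkeeping in the retraction, and I would present that case distinction carefully while treating the remaining, purely routine relation checks briefly.
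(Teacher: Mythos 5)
Your proposal is correct and follows exactly the same route as the paper: the inclusion sends generators to generators with the same indices, and the retraction $\pi_n$ kills every generator involving the index $n+1$ while fixing the rest. The paper states this without carrying out the relation checks you describe, but those checks do go through (in each braid relation with one index equal to $n+1$, precisely the two generators carrying that index die, leaving a tautology), so your more detailed verification simply fills in what the paper leaves as routine.
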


\begin{proof}
The maps $\iota_n$ and $\iota_n^+$ are defined by sending the generators of 
$\PV_n$ to the generators of $\PV_{n+1}$ with the same indices. 
The split surjection $\pi_n \colon \PV_{n+1}\surj \PV_n$ sends $x_{ij}$ 
to zero for $i=n+1$ or $j=n+1$, and sends $x_{ij}$ to $x_{ij}$ otherwise. 
The split surjection $\pi_n^+ \colon \PV_{n+1}^+ \surj \PV_n^+$ is 
defined similarly. 
\end{proof}

As noted by Bardakov in \cite[Lemma 6]{Bardakov04}, the pure virtual braid 
group $\PV_n$ admits a semi-direct product decomposition of the form  
$\PV_n\cong F_{q(n)}\rtimes \PV_{n-1}$, where $q(1)=2$ and 
$q(n)$ is infinite for $n\geq 2$. Furthermore, as shown in \cite{Bartholdi-E-E-R}, 
there exists a monomorphism from $P_n$ to $\vP_n$.

\subsection{A free product decomposition for $\PV_3$}
\label{subsec:pv23}
The pure virtual braid group $\PV_3$ is generated by 
$x_{12}, x_{21},x_{13}$, $x_{31},x_{23},x_{32}$, subject to the relations
\begin{align*}
& x_{12} x_{13}x_{23} = x_{23}x_{13} x_{12},
&x_{21} x_{23}x_{13} = x_{13} x_{23} x_{21}, 
&& x_{13} x_{12} x_{32}=x_{32}x_{12}x_{13},\\ 
&x_{31} x_{32}x_{12}   = x_{12}x_{32} x_{31},
&x_{23} x_{21} x_{31} =x_{31}x_{21}x_{23},
&& x_{32} x_{31}x_{21}=x_{21}x_{31}x_{32}.
\end{align*}

The next lemma gives a free product decomposition for this group,  
which will play an important role in the proof that $\PV_3$ is $1$-formal.

\begin{lemma}
\label{lem:isogroups}
There is a free product decomposition  $\PV_3\cong \overline{P}_4\ast \Z$.
\end{lemma}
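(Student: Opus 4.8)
The plan is to exhibit an explicit isomorphism by reducing the six-generator, six-relator presentation of $\vP_3$ to a recognizable free product. First I would exploit the relevant relations to perform a Tietze transformation that eliminates variables. Looking at the relations, the three-term relations of the form $x_{ij}x_{ik}x_{jk}=x_{jk}x_{ik}x_{ij}$ each say that one generator can be solved for as a conjugate of another; the idea is to use these to express, say, $x_{32}$, $x_{31}$, or one of the six generators in terms of a smaller set. The reference to the decomposition of $\vP_3$ by Bardakov, Mikhailov, Vershinin, and Wu in \cite{Bardakov-M-V-W} suggests that the cleanest route is to quote their result that $\vP_3$ splits off a free $\Z$ factor, and then identify the complementary factor.

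Concretely, I would proceed as follows. Using the semi-direct product decomposition $\vP_3\cong F_{q(3)}\rtimes \vP_2$ (or the $\Z$-splitting from \cite{Bardakov-M-V-W}), I would first isolate the free $\Z$ factor, realized by a central or split-off element $z$ built as a suitable product of the generators (a natural candidate is a ``total twist'' word). The complementary factor must then be shown to be isomorphic to $\overline{P}_4$. For this, I would recall that $P_4\cong \overline{P}_4\times\Z$, so $\overline{P}_4$ has a presentation on generators indexed by the $\binom{4}{2}=6$ pairs, modulo the center; after quotienting by the central $\Z$, the presentation of $\overline{P}_4$ has exactly the right number of generators and relations to match the complementary factor. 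The key step is to write down the map on generators explicitly in both directions and verify it is well-defined on relations, exactly as in the proof of Lemma \ref{lem:iso}.

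The main obstacle I anticipate is the bookkeeping required to match the two presentations relation-by-relation. The pure braid relations for $P_4$ are the classical ones (the ``commutator-type'' relations among the $A_{ij}$), whereas the $\vP_3$ relations are the simpler Bardakov relations displayed above; translating between these two conventions, and checking that the central element of $P_4$ is carried precisely to a word that becomes trivial in the $\overline{P}_4$ factor while the split $\Z$ generator lands on an independent element, is where the real work lies. I would organize this by fixing explicit Magnus-style or geometric generators for $\overline{P}_4$, defining $\phi\colon \overline{P}_4\ast\Z \to \vP_3$ and $\psi$ in the reverse direction on generators, and then checking $\phi\circ\psi=\id$ and $\psi\circ\phi=\id$ on the finite generating sets. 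Because both sides are finitely presented and the candidate maps are mutually inverse on generators, verifying that relations map to relations suffices to conclude that $\phi$ is an isomorphism, giving $\vP_3\cong\overline{P}_4\ast\Z$.
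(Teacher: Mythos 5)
Your strategy is the one the paper actually follows: invoke the Bardakov--Mikhailov--Vershinin--Wu decomposition $\vP_3\cong G_3\ast\Z$ and then identify the complementary factor $G_3$ with $\overline{P}_4$ by exhibiting mutually inverse homomorphisms between explicit presentations. So the route is right. The problem is that your write-up stops exactly where the proof begins: you never record the presentation of $G_3$ (five generators $a_1,a_2,b_1,b_2,c_1$, given as explicit words in the $x_{ij}$, with two commutation relations and four conjugation relations), never fix a presentation of $\overline{P}_4$ to compare it with, and never specify the maps $\phi$ and $\psi$. For an argument of this type the explicit assignment \emph{is} the content: the paper's choice sends four of the five generators to generators of $\overline{P}_4$ essentially identically, but must send the remaining one to a conjugate $z_2^{-1}z_4z_2$, and finding that twist (after first replacing $c_1$ by the inverse of a new generator so that the conjugation relations take a symmetric ``cyclic'' form) is the nontrivial step. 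Saying that you would write down the maps and check the relations does not discharge it.

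Two smaller points. First, the split-off $\Z$ factor in the BMVW decomposition is generated by the single element $x_{13}$, not by a ``total twist'' word, and it is certainly not central: the paper notes (citing Dies and Nicas) that $\vP_3$ has trivial center, and a free factor of a nonabelian free product is never central. Second, the semidirect product $\vP_3\cong F_{q(3)}\rtimes\vP_2$ that you propose as an alternative starting point has $q(3)$ infinite, so it does not help isolate a $\Z$ free factor; the free product splitting really does have to come from the BMVW analysis, which you would then need to quote in full rather than in outline.
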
 

\begin{proof}
As shown in \cite{Bardakov-M-V-W}, if we set $a_1 =
x_{13} x_{23}$, $b_1 =
x_{13} x_{12}$,
$b_2 =x_{21} x_{31}$, $a_2 =
x_{32} x_{31},
c_1 = x_{13} x_{31}$, $c_2 =
x_{13}$, then there is a free product decomposition,  
$\PV_3\cong G_3\ast \Z$, where $G_3$ is generated by 
$\{a_1, a_2, b_1, b_2, c_1\}$, subject to the 
relations
\[
[a_1, b_1] = [a_2, b_2] = 1,\quad 
b_1^{c_1} = b_1^{a_2}, \quad 
a_1^{c_1} = a_1^{b_2}, \quad 
b_2^{c_1} = b_2^{a_1 b_2}, \quad 
a_2^{c_1} = a_2^{b_1 a_2}, 
\]
where $y^x =  x^{-1} y x$. Replacing the generators in the presentation 
of $G_3$ by $x_1$, $x_2$, $x_3$, $x_4$, $x_5^{-1}$, respectively, 
and simplifying the relations, we obtain a new presentation 
for the group $G_3$, with generators $x_1,\dots,x_5$ and relations
\[
x_1 x_3=x_3x_1,\  
x_2 x_4=x_4x_2,\  
x_5x_3x_2=x_3x_2x_5=x_2x_5x_3,\
x_1x_4x_5=x_4x_5x_1=x_5x_1x_4.
\]

On the other hand, as noted for instance in \cite{Cohen-Suciu95}, 
the group $\overline{P}_4$ has a presentation with generators 
$z_1,\dots ,z_5$ and relations
\[
z_2 z_3=z_3z_2,\:
z_2^{-1}z_4z_2z_1=z_1z_2^{-1}z_4z_2,\:   
z_5z_3z_1=z_3z_1z_5=z_1z_5z_3,\:
z_5z_4z_2=z_4z_2z_5=z_2z_5z_4.
\]
Define a homomorphism $\phi\colon G_3\to \overline{P}_4$ 
by sending $x_1\mapsto z_2$, $x_2\mapsto z_1$, $x_3\mapsto z_3$, 
$x_4\mapsto z_2^{-1}z_4z_2$ and $x_5\mapsto z_5$. A routine check 
shows that $\phi$ is a well-defined homomorphism, with  inverse 
$\psi\colon \overline{P}_4\to G_3$ sending 
$z_1\mapsto x_2$, $z_2\mapsto x_1$, $z_3 \mapsto x_3$, 
$z_4 \mapsto x_1x_4x_1^{-1}$, and $z_5\mapsto x_5$. 
This completes the proof.
\end{proof} 

As a quick application of this lemma,
we obtain the following corollary, which was first proved
by Bardakov et al.~\cite{Bardakov-M-V-W} using a different method.

\begin{corollary}
\label{cor:rtfn}
The pure virtual braid group $\vP_3$ is a residually torsion-free nilpotent group.
\end{corollary}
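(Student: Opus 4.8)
The plan is to deduce the statement directly from the free product decomposition $\vP_3\cong \overline{P}_4\ast \Z$ of Lemma \ref{lem:isogroups}, combined with two standard closure properties of the class of residually torsion-free nilpotent groups. Recall that a group $G$ lies in this class precisely when every non-trivial element of $G$ has non-trivial image in some torsion-free nilpotent quotient of $G$; equivalently, the intersection of the isolators $\sqrt[G]{\gamma_k(G)}$ of the terms of the lower central series is trivial. The two features I would exploit are that this class is closed under passage to subgroups and, crucially, under forming free products.

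First I would verify that each of the two free factors is residually torsion-free nilpotent. For the factor $\Z$ this is immediate. For the factor $\overline{P}_4$, I would invoke the theorem of Falk and Randell \cite{Falk-Randell85} that the Artin pure braid groups $P_n$ are residually torsion-free nilpotent. Since $P_4\cong \overline{P}_4\times \Z$, the group $\overline{P}_4$ sits inside $P_4$ as a direct factor, hence as a subgroup; as the property descends to subgroups, it follows that $\overline{P}_4$ is residually torsion-free nilpotent.

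With both factors in hand, I would apply the closure of the residually torsion-free nilpotent class under free products to conclude that $\overline{P}_4\ast \Z$, and therefore $\vP_3$, is residually torsion-free nilpotent. The one point carrying genuine weight — and the step I expect to be the main obstacle — is this free-product closure: ordinary residual nilpotence is notoriously \emph{not} preserved under free products, so the torsion-free refinement must be used in an essential way. I would treat this as a known theorem, extractable from the classical work of Gruenberg and Mal'cev on residual properties of free products, or provable directly by analyzing the rational associated graded Lie algebra of a free product. Everything else being formal, the proof then reduces to correctly quoting this closure result together with the pure braid group input of \cite{Falk-Randell85}.
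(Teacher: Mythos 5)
Your proposal is correct and follows essentially the same route as the paper: decompose $\vP_3\cong \overline{P}_4\ast\Z$, get residual torsion-free nilpotence of $\overline{P}_4$ from the Falk--Randell result for $P_n$ (the relevant reference is \cite{Falk-Randell88}, not \cite{Falk-Randell85}) via passage to the direct factor, and invoke Mal'cev's theorem \cite{Malcev49} that the class is closed under free products. You correctly identify the free-product closure as the step carrying the real weight, which is exactly the result the paper quotes.
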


\begin{proof}
It is readily seen that two groups $G_1$ and $G_2$ are residually 
torsion-free nilpotent if and only their direct product, $G_1\times G_2$, 
is residually torsion-free nilpotent.  Now, as shown by Falk and Randell  
in \cite{Falk-Randell88}, the pure braid groups $P_n$ are 
residually torsion-free nilpotent.  Hence, the subgroup 
$\overline{P}_4\subset P_4$ is also residually torsion-free nilpotent.   

On the other hand, Malcev \cite{Malcev49} showed that if $G_1$ and $G_2$ 
are residually torsion-free nilpotent groups, then the free product $G_1*G_2$ is also 
residually torsion-free nilpotent. The claim follows from the decomposition 
$\vP_3\cong \overline{P}_4*\Z$.
\end{proof}
 
A more general question was asked by Bardakov and Bellingeri in \cite{Bardakov-B09}:
Are the groups $vP_n$ or $\vP_n^+$ residually torsion-free nilpotent? 

\section{Cohomology rings and Hilbert series}
\label{sec:coho}

In this section we discuss what is known about the cohomology rings 
of the pure (virtual) braid groups, and the corresponding Hilbert series.

\subsection{Hilbert series and generating functions}
\label{subsec:hilb}

Recall that the (ordinary) {\em generating function}\/ for a 
sequence of power series $\mathbf{P}=\{p_n(t)\}_{n\geq 1}$ is defined by 
$F(u,t):=\sum_{n=0}^{\infty}p_n(t)u^n$.
Likewise, the {\em exponential generating function}\/ 
for $\mathbf{P}$ is defined by 
$E(u,t):=\sum_{n=0}^{\infty}p_n(t)\frac{u^n}{n!}$.

Now let $\mathbf{G}=\{G_n \}_{n\geq 1}$ be a sequence of groups 
admitting classifying spaces $K(G_n,1)$ with finitely many cells in each dimension.  
We then define the exponential generating function for the 
corresponding Poincar\'{e} polynomials by 
\begin{equation}
\label{eq:poinseries}
\Poin(\mathbf{G},u,t):=1+\sum_{n=1}^{\infty}\Poin(G_n,t)\frac{u^n}{n!}.
\end{equation}
In particular, if we set $t=-1$, we obtain the exponential generating function for 
the Euler characteristics of the groups $G_n$, denoted by $\chi(\bG)$.

For instance, the Poincar\'{e} polynomial of a free group of rank $n$ 
is $\Poin(F_n,t)=1+nt$.  Thus, the exponential generating function for the 
sequence $\mathbf{F}=\{F_n \}_{n\geq 1}$ is $\Poin(\mathbf{F},u,t)=(1+tu)e^u$.

\subsection{Pure braid groups}
\label{subsec:coho pn}
A classifying space for the pure braid group $P_n$ is the configuration space 
$\Conf(\C,n)$ of $n$ distinct points on the complex line. This 
space has the homotopy type of a finite, $(n-1)$-dimensional CW-complex.  
As shown by Arnold in \cite{Arnold69}, 
the cohomology algebra $A_n=H^*(P_n;\C)$ is the 
skew-commutative ring generated by degree $1$ elements 
$a_{ij}$ ($1\leq i<j\leq n$), subject to the relations
\begin{equation}
\label{eq:arnold}
a_{ik}a_{jk}=a_{ij}(a_{jk}-a_{ik}) \:\text{ for }\: i<j<k.
\end{equation}

Clearly, this algebra is quadratic.    
In fact, $A_n$ is a Koszul algebra, that is to say, $\Ext^i_{A_n}(\C,\C)_j=0$ for $i\ne j$.  
Furthermore, the Poincar\'{e} polynomial of $\Conf(\C,n)$, or, equivalently, the Hilbert 
series of $A_n$, is given by 
\begin{equation}
\label{eq:hilbseriesP}
\Poin(P_n,t) = \prod_{k=1}^{n-1}(1+kt)=\sum_{i=0}^{n-1}c(n,n-i)\, t^i,
\end{equation}
where $c(n,m)$ are the (unsigned) Stirling numbers of the first kind, 
counting the number of permutations of $n$ elements which 
contain exactly $m$ permutation cycles. 

\begin{prop}
\label{prop:genf pn}
The exponential generating function for the Poincar\'{e} polynomials 
of the pure braid groups $P_n$ is given by
\[
\Poin(\mathbf{P}, u,t)=\exp\left(-\frac{\log(1-tu)}{t}\right).
\]
\end{prop}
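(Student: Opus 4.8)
The plan is to convert the product formula \eqref{eq:hilbseriesP} into a first-order linear differential equation satisfied by the generating function, and then solve it. Write $p_n=\Poin(P_n,t)=\prod_{k=1}^{n-1}(1+kt)$, with the conventions $p_0=p_1=1$ coming from the empty product, and set $F(u)=\Poin(\bP,u,t)=\sum_{n\ge 0}p_n u^n/n!$. The key structural observation is the recursion $p_{n+1}=(1+nt)\,p_n$, which is immediate from \eqref{eq:hilbseriesP}.

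First I would differentiate $F$ term by term and reindex: since $F'(u)=\sum_{n\ge 0}p_{n+1}u^n/n!$, substituting the recursion gives $F'(u)=\sum_{n\ge 0}(1+nt)p_n u^n/n!$. Splitting the sum and recognizing $\sum_{n\ge 0} n\,p_n u^n/n! = uF'(u)$, I obtain the identity $F'(u)=F(u)+tu\,F'(u)$, that is, $(1-tu)F'(u)=F(u)$. This separable equation integrates to $\log F(u)=-\tfrac1t\log(1-tu)+C$, and the initial condition $F(0)=p_0=1$ forces $C=0$, yielding $F(u)=\exp\!\big(-\tfrac1t\log(1-tu)\big)$, as claimed.

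Alternatively, and perhaps more transparently, I could identify the right-hand side directly as a binomial series. Rewriting the target as $\exp(-\tfrac1t\log(1-tu))=(1-tu)^{-1/t}$ and applying the generalized binomial theorem $(1-x)^{-\alpha}=\sum_{n\ge0}(\alpha)_n x^n/n!$ with $\alpha=1/t$ and $x=tu$, where $(\alpha)_n=\alpha(\alpha+1)\cdots(\alpha+n-1)$ is the rising factorial, the coefficient of $u^n/n!$ becomes $t^n(1/t)_n=\prod_{k=0}^{n-1}(1+kt)=\prod_{k=1}^{n-1}(1+kt)=p_n$. Matching coefficients against \eqref{eq:hilbseriesP} then finishes the argument.

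The computation is essentially routine; there is no serious obstacle, only bookkeeping. The one point that requires care is the treatment of the low-index terms and the empty-product convention: one must check that $p_0=1$ (the constant term built into the definition \eqref{eq:poinseries}) and $p_1=\Poin(P_1,t)=1$ are consistent with $\prod_{k=1}^{n-1}(1+kt)$, so that the reindexing in the differential-equation step (or the factor $1+0\cdot t$ in the binomial step) is valid. Everything else—convergence as a formal power series in $u$ over the field $\C(t)$, and the vanishing of the integration constant—is automatic.
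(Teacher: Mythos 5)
Your proof is correct, but it takes a genuinely different route from the paper's. The paper starts from the Stirling-number expansion $\Poin(P_n,t)=\sum_{i=0}^{n-1}c(n,n-i)t^i$ of \eqref{eq:hilbseriesP} and simply quotes the classical exponential generating function $\exp(-x\log(1-z))=\sum_{n,k}c(n,k)x^k z^n/n!$ from Stanley, substituting $x=t^{-1}$, $z=tu$ and matching coefficients. You instead work from the product form $\prod_{k=1}^{n-1}(1+kt)$: the recursion $p_{n+1}=(1+nt)p_n$ yields the first-order equation $(1-tu)F'(u)=F(u)$, which integrates to $(1-tu)^{-1/t}$; your binomial-series variant, expanding $(1-tu)^{-1/t}$ via the rising factorial $t^n(1/t)_n=\prod_{k=0}^{n-1}(1+kt)$, amounts to re-deriving the Stirling identity in exactly the special case needed. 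Both arguments are sound; yours is self-contained and avoids the external citation, while the paper's is shorter given the known identity and keeps the combinatorial interpretation of the Betti numbers of $P_n$ as Stirling numbers in view. Your attention to the empty-product conventions $p_0=p_1=1$ and to the constant term $1$ in \eqref{eq:poinseries} is exactly the right bookkeeping, and all manipulations are valid in the ring of formal power series in $u$ over $\C[t]$ since $-\log(1-tu)/t$ has polynomial coefficients and no constant term.
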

\begin{proof}
It is known (see, e.g.~\cite{Stanley}) that the exponential generating 
function for the unsigned Stirling numbers $c(n,k)$ is given by
\begin{equation}
\label{eq:stir}
\exp(-x\cdot {\log(1-z)})=\sum_{n=0}^{\infty}\sum_{k=0}^{n}c(n,k)x^k\frac{z^n}{n!}.
\end{equation}

Setting $x=t^{-1}$ and $z=tu$, we obtain
\begin{equation}
\exp\left(-\frac{\log(1-tu)}{t}\right)=
\sum_{n=0}^{\infty}\sum_{k=0}^{n}c(n,k)t^{-k}\frac{(tu)^n}{n!}=
1+\sum_{n=1}^{\infty}\sum_{i=0}^{n-1}c(n,n-i)t^i \frac{u^n}{n!},
\end{equation}
where we used $c(0,0)=1$ and $c(n,0)=0$ for $n\geq 1$. This completes the proof.
\end{proof}

\subsection{Pure virtual braid groups}
\label{subsec:coho}
In \cite{Bartholdi-E-E-R}, Bartholdi et al.~describe  
classifying spaces for the pure virtual braid group $\vP_n$ and $\vP_n^+$. 
Let us note here that both these spaces are finite, $(n-1)$-dimensional 
CW-complexes.

The following theorem provides presentations for the cohomology algebras  
of the pure virtual braid groups and their upper triangular subgroups.

\begin{theorem}[\cite{Bartholdi-E-E-R, Lee}]
\label{thm:BEER}
For each $n\ge 2$, the following hold.
\begin{enumerate}
\item \label{an}
The cohomology algebra $A_n=H^*(\PV_n;\C)$ is the skew-commutative algebra 
 generated by degree $1$ elements $a_{ij}$ $(1\leq i\neq j\leq n)$  
subject to the relations   
$a_{ij}a_{ik} =a_{ij}a_{jk}-a_{ik}a_{kj}$, 
$a_{ik}a_{jk} =a_{ij}a_{jk}-a_{ji}a_{ik}$, and 
$a_{ij}a_{ji}=0$ for $i, j, k$ all distinct.
\item \label{an+}
The cohomology algebra $A^+_n=H^*(\PV^+_n;\C)$ is the skew-commutative algebra
generated by degree $1$ elements $a_{ij}$ $(1\leq i\neq j\leq n)$,   
subject to the relations $a_{ij}=-a_{ji}$ and  $a_{ij}a_{jk}=a_{jk}a_{ki}$ 
for $i\neq j\neq k$.
\end{enumerate}
\end{theorem}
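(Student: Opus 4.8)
The plan is to read both cohomology rings off the group presentations of Section~\ref{subsec:presentations}, by first pinning down the degree-$1$ generators, then extracting the quadratic relations from the relators, and finally showing that no higher relations occur via a Koszulity-plus-Hilbert-series comparison. First I would identify $H^1$. Abelianizing the presentation of $\vP_n$, both families of relations $x_{ij}x_{ik}x_{jk}=x_{jk}x_{ik}x_{ij}$ and $x_{ij}x_{kl}=x_{kl}x_{ij}$ collapse to trivial identities, so the abelianization is free of rank $n(n-1)$ on the classes of the $x_{ij}$; dually $H^1(\vP_n;\C)$ has basis $\{a_{ij}\}_{i\ne j}$. For $\vP_n^+$ the same computation gives a free abelianization of rank $\binom{n}{2}$, and one introduces the redundant symbols $a_{ji}:=-a_{ij}$ ($i<j$) purely to state the relations cyclically, which is exactly the content of the relation $a_{ij}=-a_{ji}$.

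Next I would extract the quadratic relations via Fox calculus (the Magnus expansion). Substituting $x_{ij}\mapsto 1+X_{ij}$ into the free associative algebra and reading off the degree-$2$ part of each relator produces the defining relation of the holonomy Lie algebra $\fh(\vP_n)$, which is dual to the cup-product relation in $H^2$. A short calculation shows that the relator attached to an ordered triple $(i,j,k)$ contributes the Lie relation $[X_{ij},X_{ik}]+[X_{ij},X_{jk}]+[X_{ik},X_{jk}]=0$, while $x_{ij}x_{kl}=x_{kl}x_{ij}$ contributes $[X_{ij},X_{kl}]=0$. Dualizing under the pairing $\bigwedge^2 H^1\times\bigwedge^2 H_1\to\C$ yields precisely the stated quadratic relations among the $a_{ij}$ (and likewise for $\vP_n^+$, restricting to $i<j<k$). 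In particular these relations hold in cohomology, so there is a well-defined algebra map from the candidate quadratic algebra $B_n$ (resp.\ $B_n^+$) to $H^*(\vP_n;\C)$ (resp.\ $H^*(\vP_n^+;\C)$) sending $a_{ij}\mapsto a_{ij}$.

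It then remains to prove these maps are isomorphisms. Surjectivity reduces to generation in degree $1$, which I would obtain from the finite $(n-1)$-dimensional classifying spaces of \cite{Bartholdi-E-E-R}: the cellular cochain complex is supported in the right range and $H^1$ is already exhausted by the $a_{ij}$. For injectivity I would compare Hilbert series. The algebra $B_n$ is visibly quadratic, and I would verify it is Koszul by exhibiting a quadratic Gr\"obner (PBW) basis for its relations, a finite combinatorial check on the monomials $a_{ij}a_{kl}$. Koszulity then determines $\Hilb(B_n,t)$ through the duality identity $\Hilb(B_n,t)\cdot\Hilb(B_n^{!},-t)=1$, where $B_n^{!}=U(\fh(\vP_n))$ is the enveloping algebra of the holonomy Lie algebra computed above; matching this against the Poincar\'e polynomial $\Poin(\vP_n,t)$ read off from the cell structure forces the surjection $B_n\surj H^*(\vP_n;\C)$ to be an isomorphism in every degree, and the argument for $\vP_n^+$ is identical.

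The main obstacle is this final dimension count, namely proving that the quadratic relations are exhaustive. Everything through degree $2$ is forced by the relators, but ruling out higher relations requires both the Koszulity of $B_n$ (so that its Hilbert series is governed by purely quadratic data) and an independent computation of $\Poin(\vP_n,t)$ from the classifying space. The Koszulity step is the delicate one: the monomial order must be simultaneously compatible with the ``triangle'' relations coming from $x_{ij}x_{ik}x_{jk}=x_{jk}x_{ik}x_{ij}$ and with the commuting relations $x_{ij}x_{kl}=x_{kl}x_{ij}$, and verifying that all overlap ambiguities resolve is where the combinatorics of the two relation families must be handled with care.
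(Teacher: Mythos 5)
This statement is imported verbatim from \cite{Bartholdi-E-E-R, Lee}; the paper supplies no proof of its own, so there is nothing internal to compare your argument against, and your proposal must be judged as a reconstruction of the cited results. Your opening steps are sound: abelianizing the presentations does give $H_1(\vP_n)\cong\Z^{n(n-1)}$ and $H_1(\vP_n^+)\cong\Z^{\binom{n}{2}}$, and the Magnus/Fox computation of the degree-two parts of the relators correctly identifies the span of $\im\bigl(\partial\colon H_2\to\bigwedge^2 H_1\bigr)$, hence verifies that the stated quadratic expressions lie in the kernel of the cup product and that the map from the candidate quadratic algebra $B_n$ to $H^*(\vP_n;\C)$ is well defined.

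The gaps are in the two steps you delegate to the classifying space. First, surjectivity: the observation that the $K(\vP_n,1)$ of \cite{Bartholdi-E-E-R} is a finite $(n-1)$-dimensional complex and that $H^1$ is spanned by the $a_{ij}$ does not imply that $H^*(\vP_n;\C)$ is generated in degree one --- a finite aspherical complex with the correct $H^1$ can perfectly well have cohomology classes in degree $\ge 2$ that are not products of degree-one classes. Establishing degree-one generation requires actually computing cup products against the explicit cell structure (or an inductive fibration-type argument), and this is one of the main theorems of \cite{Bartholdi-E-E-R}, not a formal consequence of dimension bounds. Second, your injectivity argument needs $\Poin(\vP_n,t)$ as independent input; that Hilbert series (the Lah and Stirling generating polynomials of \eqref{eq:hilbseriesvP}) is itself a nontrivial computation from the same cell structure, so together with the first point essentially the entire content of the theorem has been pushed into the black box. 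There is also a circularity hazard in the way you invoke Koszul duality: you propose to determine $\Hilb(B_n,t)$ from $\Hilb(B_n^!,-t)$ with $B_n^!=U(\fh(\vP_n))$, but the Hilbert series of $U(\fh(\vP_n))$ is no more accessible a priori than that of $B_n$ (identifying it with $\Hilb(U(\gr(\vP_n)),t)$ would require the graded-formality of $\vP_n$, which is precisely Lee's theorem). If a quadratic Gr\"obner basis for $B_n$ exists it would give you $\Hilb(B_n,t)$ directly from the normal monomials, making the duality detour unnecessary --- but exhibiting such a basis compatible with both the triangle relations and the commuting relations, and resolving all overlaps, is itself a substantive claim that you assert rather than carry out.
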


\begin{corollary}
\label{cor:presentationCoho}
The cohomology algebra $H^*(\PV^+_n;\C)$ has a simplified presentation 
with generators $e_{ij}$ in degree $1$ for $1\leq i< j\leq n$, 
and relations $e_{ij}(e_{ik}-e_{jk})$ and  $(e_{ij}-e_{ik})e_{jk}$ for $i< j< k$.
\end{corollary}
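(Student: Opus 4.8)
The plan is to derive the stated presentation directly from the one in Theorem \ref{thm:BEER}\eqref{an+} by a Tietze-type elimination of redundant generators. The algebra $A_n^+=H^*(\vP_n^+;\C)$ is a quotient of the skew-commutative (hence, over $\C$, exterior) algebra on the generators $a_{ij}$ ($1\le i\ne j\le n$) by the relations $a_{ij}=-a_{ji}$ together with the quadratic relations $a_{pq}a_{qr}=a_{qr}a_{rp}$ for $p,q,r$ distinct. First I would use the linear relations $a_{ij}=-a_{ji}$ to eliminate every generator whose indices are in decreasing order: set $e_{ij}:=a_{ij}$ for $i<j$, and record $a_{ji}=-e_{ij}$. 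This is an invertible change of generators (one recovers $a_{ij}=e_{ij}$ and $a_{ji}=-e_{ij}$ for $i<j$), so no information is lost, and after it the algebra is generated by the $\binom{n}{2}$ elements $e_{ij}$ ($i<j$). It then remains only to rewrite the quadratic relations in terms of the $e$'s.

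The key step is a computation carried out one unordered triple at a time. Fix $i<j<k$; the six orderings of $\{i,j,k\}$ produce six instances of $a_{pq}a_{qr}=a_{qr}a_{rp}$, each involving only $e_{ij}$, $e_{ik}$, $e_{jk}$ and their negatives. Substituting $a_{ji}=-e_{ij}$, etc., and using graded-commutativity of degree-$1$ elements ($xy=-yx$ and $x^2=0$), each instance collapses to an equality between two of the three degree-two monomials $e_{ij}e_{ik}$, $e_{ij}e_{jk}$, $e_{ik}e_{jk}$. For example, the ordering $(i,j,k)$ gives $a_{ij}a_{jk}=a_{jk}a_{ki}$, that is, $e_{ij}e_{jk}=-e_{jk}e_{ik}=e_{ik}e_{jk}$, which is precisely $(e_{ij}-e_{ik})e_{jk}=0$; another ordering yields $e_{ij}e_{ik}=e_{ik}e_{jk}$, and the remaining ones reproduce these. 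Collecting all six instances, I expect them to be equivalent to the single chain of equalities $e_{ij}e_{ik}=e_{ij}e_{jk}=e_{ik}e_{jk}$, i.e. exactly the two relations $e_{ij}(e_{ik}-e_{jk})=0$ and $(e_{ij}-e_{ik})e_{jk}=0$. Finally, I would observe that the relations among disjoint pairs of indices carry no content here, since they are already built into the skew-commutative structure, so the three-index relations are the only substantive ones.

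The main obstacle is the sign bookkeeping in the per-triple computation: one must verify that across all six orderings of $\{i,j,k\}$ the resulting equalities are mutually consistent and together equivalent to the two stated relations. The delicate point is to confirm that they do not collapse to a \emph{third}, independent relation forcing all three monomials to vanish (which would contradict the Hilbert series of $A_n^+$), while still capturing both of the stated relations and not merely one of them. Once this finite sign check is done for a single triple, the argument applies verbatim to every $i<j<k$, and the elimination being invertible yields the claimed isomorphism of graded algebras.
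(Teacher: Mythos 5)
Your proposal is correct and takes essentially the same route as the paper, which simply defines the map $\phi\colon A_n^+\to \widetilde{A}_n^+$ by $\phi(a_{ij})=e_{ij}$ for $i<j$ and $\phi(a_{ij})=-e_{ji}$ for $i>j$ and asserts it is ``easily checked'' to be an isomorphism. Your per-triple sign check --- that the six orderings of $\{i,j,k\}$ all reduce to the chain $e_{ij}e_{ik}=e_{ij}e_{jk}=e_{ik}e_{jk}$ and nothing more --- is precisely the verification the paper leaves to the reader, and it goes through as you anticipate.
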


\begin{proof}
Let $\widetilde{A}_n^+$ be the algebra given by the above presentation.  
The morphism $\phi\colon A_n^+\to \widetilde{A}_n^+$ defined by 
$\phi(a_{ij})=e_{ij}$ for $i<j$ and $\phi(a_{ij})=-e_{ij}$ for $i>j$ is easily 
checked to be an isomorphism.
\end{proof}

In \cite{Bartholdi-E-E-R}, Bartholdi et al.~also showed that both $A_n$ 
and $A^+_n$ are Koszul algebras, and computed the Hilbert series of 
these graded algebras, as follows:
\begin{equation}
\label{eq:hilbseriesvP}
\Poin(vP_n, t) =\sum_{i=0}^{n-1}L(n,n-i)\, t^i , \qquad 
\Poin(vP_n^+, t) =\sum_{i=0}^{n-1}S(n,n-i)\, t^i.
\end{equation}
Here $L(n,n-i)$ are the Lah numbers, i.e.,
the number of ways of partitioning $[n]$ into $n-i$ nonempty ordered subsets,
while $S(n,n-i)$ are the Stirling numbers of the second kind, 
i.e., the number of ways of partitioning $[n]$ into $n-i$ nonempty (unordered) sets. 
Explicitly,
\begin{align}
\label{eq:lah}
L(n,n-i)&= \binom{n-1}{i} \dfrac{n!}{(n-i)!}, \\ 
\label{eq:stirling2}
S(n,n-i)&= \dfrac{1}{(n-i)!}\sum\limits_{j=0}^{n-i-1}(-1)^j
\binom{n-i}{j}(n-i-j)^n.
\end{align}
The polynomial $\Poin(vP_n^+, t)$ is the rank-generating 
function for the partition lattice $\Pi_n$, 
see e.g.~\cite[Exercise 3.10.4]{Stanley}.
All the roots of such a polynomial are negative real numbers.  

\begin{prop}[\cite{Bartholdi-E-E-R}]
\label{prop:expgen vpn}
The exponential generating function 
for the polynomials $\Poin(vP_n,t)$ and $\Poin(vP_n^+,t)$ are 
given by
\[
\Poin(\mathbf{vP}, u,t)=\exp\!\left(\frac{u}{1-tu}\right), \qquad 
\Poin(\mathbf{vP^+}, u,t)=\exp\!\left(\frac{\exp(tu)-1}{t}\right).
\]
\end{prop}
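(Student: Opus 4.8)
The plan is to obtain both identities exactly as in the proof of Proposition~\ref{prop:genf pn} for the pure braid groups: recognize each of the two desired exponential generating functions as a specialization of a standard \emph{bivariate} exponential generating function, here the one for the Lah numbers (for $\mathbf{vP}$) and the one for the Stirling numbers of the second kind (for $\mathbf{vP^+}$). By definition, $\Poin(\mathbf{vP},u,t)=1+\sum_{n\ge1}\Poin(vP_n,t)\,u^n/n!$ and similarly for $\mathbf{vP^+}$, so in view of the Hilbert series \eqref{eq:hilbseriesvP} the problem reduces to repackaging the arrays $L(n,k)$ and $S(n,k)$ into two-variable generating functions and then substituting.

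First I would record the relevant bivariate generating functions via the exponential formula. Since $L(n,k)$ counts the partitions of $[n]$ into $k$ nonempty \emph{linearly ordered} blocks, and a single block of size $m\ge1$ carries $m!$ orderings (exponential generating function $\sum_{m\ge1}m!\,z^m/m!=z/(1-z)$), marking the block count by $x$ gives
\begin{equation*}
\sum_{n=0}^{\infty}\sum_{k=0}^{n}L(n,k)\,x^k\frac{z^n}{n!}=\exp\!\left(\frac{xz}{1-z}\right).
\end{equation*}
Likewise, $S(n,k)$ counts the partitions of $[n]$ into $k$ nonempty unordered blocks (block generating function $e^z-1$), so
\begin{equation*}
\sum_{n=0}^{\infty}\sum_{k=0}^{n}S(n,k)\,x^k\frac{z^n}{n!}=\exp\!\left(x(e^z-1)\right).
\end{equation*}
Both are classical and may simply be cited from \cite{Stanley}, or derived from the closed forms \eqref{eq:lah} and \eqref{eq:stirling2}.

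Next I would substitute $x=t^{-1}$ and $z=tu$, precisely as was done with the Stirling numbers of the first kind in Proposition~\ref{prop:genf pn}. This sends the Lah exponent to $t^{-1}(tu)/(1-tu)=u/(1-tu)$ and rewrites the left-hand side as $\sum_{n,k}L(n,k)\,t^{\,n-k}u^n/n!$; reindexing by $i=n-k$ and using $L(0,0)=1$ together with $L(n,0)=0$ for $n\ge1$ collapses this to $1+\sum_{n\ge1}\big(\sum_{i=0}^{n-1}L(n,n-i)\,t^i\big)u^n/n!=\Poin(\mathbf{vP},u,t)$. The same substitution sends the Stirling exponent to $(\exp(tu)-1)/t$ and, by the identical reindexing with $S(0,0)=1$ and $S(n,0)=0$, recovers $\Poin(\mathbf{vP^+},u,t)$.

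I do not expect a genuine obstacle here. The only substantive inputs are the two bivariate generating functions, which are standard consequences of the exponential formula applied to ordered and unordered set partitions. The single point requiring care is the bookkeeping---the shift $i=n-k$ and the vanishing of the $k=0$ terms---which is exactly what aligns the power of $t$ in \eqref{eq:hilbseriesvP} with the exponent produced by the substitution $x=t^{-1}$, $z=tu$, mirroring the pure braid computation.
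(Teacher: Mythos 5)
Your proof is correct. The paper itself gives no proof of this proposition (it is cited from \cite{Bartholdi-E-E-R}), but your argument --- the bivariate exponential generating functions $\exp(xz/(1-z))$ for the Lah numbers and $\exp(x(e^z-1))$ for the Stirling numbers of the second kind, followed by the substitution $x=t^{-1}$, $z=tu$ and the reindexing $i=n-k$ --- is exactly the method the paper uses to prove the analogous Proposition \ref{prop:genf pn} for the pure braid groups, and all the steps check out.
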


Finally, let us note that Dies and Nicas \cite{Dies-Nicas14} 
showed that the Euler characteristic
of $vP_n$ is non-zero for all $n\geq 2$, while the Euler characteristic
of $vP_n^+$ is non-zero for all $n\geq 3$, with one possible exception 
(and no exception if Wilf's conjecture is true). 

\section{Resonance varieties}
\label{sec:res}

In this section we study the resonance varieties of the pure (virtual) braid groups. 
We start with a discussion of how resonance behaves with respect to products and 
wedges. 

\subsection{Resonance varieties of graded algebras}
\label{sub:resonance alg}

Let $A=\bigoplus_{i\ge 0} A^i$ be a graded, graded-commu\-tative $\C$-algebra.  
We will assume throughout that $A$ is connected (i.e., $A^0=\C$), 
and locally finite (i.e., the Betti numbers $b_i:=\dim A^i$ are finite, 
for all $i\ge 0$). By definition, the (degree $i$, depth $d$) 
\emph{resonance varieties}\/ of $A$ 
are the algebraic sets
\begin{equation}
\label{eq:resvars}
\cR^i_d(A)=\{a\in A^1\mid b_i(A,a) \geq d\}, 
\end{equation}
where $(A,a)$ is the cochain complex (known as the {\em Aomoto complex}) 
with differentials $\delta^i_{a}\colon A^i\to A^{i+1}$ given by 
$\delta^i_{a}=a\cdot u$, and $b_i(A,a):=\dim H^i(A,a)$.  

Observe that $b_i(A,0)=b_i(A)$.  Thus, $\cR_d^i(A)$ is empty if either $d>b_i$ 
or $d\geq 0$ and $b_i=0$. Furthermore, $0\in \cR_d^i(A)$ if and 
only if $d\le b_i$. In degree zero, we have that $\cR_d^0(A)=\{0\}$ 
for $d=1$  and $\cR_d^0(A)=\emptyset$ for $d\geq 2$.
We use the convention that $\cR^i_d(A)=A^1$ for $d\leq 0$.  
The following simple lemma will be useful in computing the 
resonance varieties of the algebra $A=H^*(\vP_3,\C)$.

\begin{lemma}
\label{lem:resonance2}
Suppose $A^i\neq 0$ for $i\leq 2$ and $A^i=0$ for $i\geq 3$.  
Then $\cR_{d}^2(A)=\cR_{d-\chi}^1(A)$ for $d\leq b_2$, where 
$\chi=1-b_1+b_2$ is the Euler characteristic of $A$.
\end{lemma}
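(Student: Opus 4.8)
The plan is to exploit the fact that, under the stated hypotheses, the Aomoto complex $(A,a)$ has only three nonzero terms,
\[
0 \to A^0 \xrightarrow{\,\delta^0_a\,} A^1 \xrightarrow{\,\delta^1_a\,} A^2 \to 0,
\]
so that its homological Euler characteristic is pinned down regardless of the point $a\in A^1$. First I would record the identity
\[
b_0(A,a)-b_1(A,a)+b_2(A,a)=\dim A^0-\dim A^1+\dim A^2=1-b_1+b_2=\chi,
\]
valid for \emph{every} $a$, since the alternating sum of the dimensions of the homology of a finite cochain complex equals the alternating sum of the dimensions of its terms.

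Next I would compute the zeroth Betti number pointwise. As $A^0=\C\cdot 1$, the differential $\delta^0_a$ sends $c\cdot 1$ to $c\cdot a$, hence is injective precisely when $a\neq 0$; therefore $b_0(A,a)=0$ for all $a\neq 0$. Substituting into the Euler characteristic identity yields the key relation
\[
b_2(A,a)=b_1(A,a)+\chi \qquad \text{for all } a\neq 0.
\]
It follows at once that, away from the origin, $b_2(A,a)\geq d$ if and only if $b_1(A,a)\geq d-\chi$; that is, $\cR^2_d(A)\setminus\{0\}$ and $\cR^1_{d-\chi}(A)\setminus\{0\}$ coincide for every $d$.

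The one point requiring care---and the reason for the hypothesis $d\leq b_2$---is the behavior at the origin, where $\delta^0_0=0$ forces $b_0(A,0)=1$ and the relation above fails. Here I would invoke the elementary fact recorded just before the lemma, namely that $0\in\cR^i_d(A)$ if and only if $d\leq b_i$. Thus $0\in\cR^2_d(A)$ exactly when $d\leq b_2$, whereas $0\in\cR^1_{d-\chi}(A)$ exactly when $d-\chi\leq b_1$, i.e.\ $d\leq b_2+1$. These two membership conditions differ only at the single value $d=b_2+1$; restricting to $d\leq b_2$ guarantees that the origin lies in both varieties (indeed $d-\chi\leq b_1-1<b_1$), so it contributes identically to the two sides. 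Combining this with the off-origin agreement gives $\cR^2_d(A)=\cR^1_{d-\chi}(A)$ for $d\leq b_2$, as claimed.

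I do not anticipate a genuine obstacle here: the entire argument is a three-term Euler-characteristic count, and the only delicate point is the bookkeeping at $a=0$, which the constraint $d\leq b_2$ is precisely designed to absorb. (As a sanity check, one may verify that the conventions $\cR^i_d(A)=A^1$ for $d\leq 0$ are also consistent, since for $a\neq 0$ one has $b_2(A,a)=b_1(A,a)+\chi\geq\chi$, so $\cR^2_d(A)$ is all of $A^1$ whenever $\cR^1_{d-\chi}(A)$ is.)
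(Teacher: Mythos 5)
Your proposal is correct and follows essentially the same route as the paper: the off-origin identity $b_2(A,a)=b_1(A,a)+\chi$ coming from the three-term Euler characteristic count, plus the separate check that the origin lies in both $\cR^2_d(A)$ and $\cR^1_{d-\chi}(A)$ precisely because $d\leq b_2$ forces $d-\chi\leq b_1-1$. The only difference is that you spell out the derivation of $b_2(A,a)=b_1(A,a)+\chi$ via $b_0(A,a)=0$ for $a\neq 0$, which the paper leaves implicit.
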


\begin{proof}
By the above discussion,  $0\in \cR_{d}^2(A)$ if and only if 
$d\leq b_2$.  But this is equivalent to 
$0\in \cR_{d-\chi}^1(A)$, since $d-\chi\leq b_2-\chi\leq b_1-1$.
Now let $a\in A^1\setminus \{0\}$.  Then 
$b_2(A,a) =b_1(A,a)+\chi$. Hence,
$a\in \cR_{d}^2(A)$ if and only if $a\in \cR_{d-\chi}^1(A)$, 
and we are done.
\end{proof}

We will be mostly interested here in the degree $1$ resonance varieties, 
$\cR^1_d(A)$.  Equations for these varieties can be obtained as follows 
(see for instance \cite{Suciu12}). 
Let $\{e_1,\dots,e_n\}$ be a basis for the complex vector space 
$A^1=H^1(G;\C)$, and let $\{x_1,\dots,x_n\}$ be the dual basis for 
$A_1=H_1(G;\C)$. Identifying the symmetric algebra $\Sym(A_1)$ 
with the polynomial ring $S=\C[x_1,\dots,x_n]$, we obtain 
a cochain complex of free $S$-modules,
\begin{equation}
\label{eq:cc}
\xymatrix{A^0\otimes_{\C} S \ar^{\delta^0}[r]& A^1\otimes_{\C} 
S \ar^{\delta^1}[r]& A^2\otimes_{\C} S \ar^(.6){\delta^2}[r] &  \cdots},
\end{equation}
with differentials given by
$\delta^i(u\otimes 1)=\sum_{j=1}^ne_ju\otimes x_j$ 
for $u\in A^i$ and extended by $S$-linearity.  
The resonance variety $\cR^1_d(A)$, then, is the zero locus of the 
ideal of codimension $d$ minors of the matrix $\delta^1$.

Now suppose $X$ is a connected, finite-type CW-complex. 
One defines then the resonance varieties of $X$ to be the sets 
$\cR^i_d(X):=\cR^i_d(H^*(X,\C))$.  Likewise, the resonance varieties
of a group $G$ admitting a finite-type classifying space are defined as 
$\cR^i_d(G):=\cR^i_d(H^{*}(G,\C))$.

\subsection{Resonance varieties of products and coproducts}
\label{sub:resonance coprod}

The next two results are generalizations of Propositions 13.1 and 13.3 
from \cite{Papadima-Suciu10P}. We will use these results 
to compute the resonance varieties of the group $\vP_3$.

\begin{prop}
\label{prop:resProd}
Let $A=B \otimes C$ be the product of two connected, finite-type 
graded algebras. Then, 
for all $ i\ge 1$, 
\begin{align*}
\cR^1_d(B \otimes C)&=\cR^1_d(B)\times \{0\} \cup \{0\}\times \cR^1_d(C),\\
\cR^i_1(B \otimes C)&=\bigcup\limits_{s+t=i} \cR^s_1(B)\times  \cR^t_1(C).
\end{align*}
\end{prop}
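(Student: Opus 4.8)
The plan is to recognize the Aomoto complex of the tensor product as a tensor product of complexes, and then to extract the resonance varieties from the Künneth formula. Since $B$ and $C$ are connected, $A^1=(B^1\otimes 1)\oplus(1\otimes C^1)$, so each $a\in A^1$ is uniquely $a=b+c$ with $b\in B^1$ and $c\in C^1$. First I would check that the cochain complex $(A,a)$ coincides with the tensor product of complexes $(B,b)\otimes(C,c)$: in the graded tensor product of graded-commutative algebras, left multiplication by $a=b\otimes 1+1\otimes c$ sends a homogeneous element $u\otimes v$ (with $u\in B$, $v\in C$) to $(bu)\otimes v+(-1)^{|u|}u\otimes(cv)$, which is exactly the Koszul-signed differential of the tensor product of the two Aomoto complexes. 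This identification is the structural core of the proof; everything else is formal.

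Next I would invoke the Künneth formula over the field $\C$, which gives $H^i(A,a)\cong\bigoplus_{s+t=i}H^s(B,b)\otimes H^t(C,c)$ and hence
\[
b_i(A,a)=\sum_{s+t=i}b_s(B,b)\,b_t(C,c).
\]
The second formula is then immediate: $a\in\cR^i_1(A)$ iff $b_i(A,a)\geq 1$ iff some summand is nonzero, i.e.\ iff there is a splitting $s+t=i$ with $b_s(B,b)\geq 1$ and $b_t(C,c)\geq 1$, which says precisely that $b\in\cR^s_1(B)$ and $c\in\cR^t_1(C)$. This yields $\cR^i_1(A)=\bigcup_{s+t=i}\cR^s_1(B)\times\cR^t_1(C)$, and a quick check against $A^i=\bigoplus_{s+t=i}B^s\otimes C^t$ shows the equality holds at the origin as well.

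For the first formula I would specialize to $i=1$, where the dimension formula reads $b_1(A,a)=b_0(B,b)\,b_1(C,c)+b_1(B,b)\,b_0(C,c)$. The needed auxiliary fact is the value of $b_0$: since $B^0=\C$ and $\delta^0_b(1)=b$, one has $b_0(B,b)=1$ if $b=0$ and $b_0(B,b)=0$ if $b\neq 0$, and similarly for $C$. A four-way case analysis on the vanishing of $b$ and $c$ then shows, for $a\neq 0$, that $b_1(A,a)$ equals $b_1(B,b)$ when $c=0$, equals $b_1(C,c)$ when $b=0$, and vanishes when both $b$ and $c$ are nonzero. Hence a nonzero $a$ lies in $\cR^1_d(A)$ exactly when it lies in $\cR^1_d(B)\times\{0\}$ or in $\{0\}\times\cR^1_d(C)$, which is the asserted decomposition.

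The step I expect to be the main obstacle is pinning down the behavior at the origin in the first identity. There $b_1(A,0)=b_1(B)+b_1(C)$, so membership of $0$ must be compared directly against the convention that $0\in\cR^1_d$ iff $d\leq b_1$; this is the one place where the factor-wise description must be reconciled carefully, whereas away from $0$ the case analysis above settles the equality outright.
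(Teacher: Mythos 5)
Your proof follows essentially the same route as the paper's: split $a=(a_1,a_2)\in B^1\oplus C^1$, identify the Aomoto complex $(A,a)$ with the tensor product of complexes $(B,a_1)\otimes(C,a_2)$, apply the K\"unneth formula to get $b_i(A,a)=\sum_{s+t=i}b_s(B,a_1)\,b_t(C,a_2)$, and finish with the case analysis on the vanishing of $a_1$ and $a_2$ in degree one. The subtlety you flag at the origin of the first identity (membership of $0$ for $\max(b_1(B),b_1(C))<d\le b_1(B)+b_1(C)$) is a genuine point, but the paper's own proof passes over it just as quickly with ``the first formula now easily follows,'' so your treatment is, if anything, the more careful of the two.
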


\begin{proof}
Let $a = (a_1, a_2)$ be an element in $A^1=B^1\oplus C^1$. 
The cochain complex $(A,a)$ splits as a tensor product of cochain 
complexes, $(B,a_1)\otimes (C,a_2)$. Therefore, 
\begin{equation}
\label{eq:bettiAomoto}
b_i(A,a)=\sum_{s+t=i} b_s(B,a_1)b_t(C,a_2),
\end{equation}
and the second formula follows.
In particular, we have
$b_1(A,(0,0))=b_1(B, 0)+b_1(C, 0)$, $b_1(A,(0,a_2))=b_1(C, a_2)$ if $a_2\neq 0$, 
$b_1(A,(a_1,0))=b_1(B, a_1)$ if $a_1\neq 0$, and  $b_1(A,a)=0$ if $a_1\neq 0$ and 
$a_2\neq 0$.
The first formula now easily follows.
\end{proof}

\begin{prop}
\label{prop:res}
Let $A=B \vee C$ be the coproduct of two connected, finite-type 
graded algebras. Then, 
for all $ i\ge 1$, 
\begin{align*}
& \cR_d^1(B \vee  C)=
 \bigg(\bigcup\limits_{j+k=d-1}(\cR_j^1(B)\backslash \{0\}) \times 
(\cR_k^1(C)\backslash \{0\})\bigg)\cup  \\[-6pt]
&\hspace*{2in}\big( \{0\}\times \cR^1_{s}(C)\big)\cup 
\big( \cR^1_{t}(B)\times\{0\}\big), \\  
&\cR^i_d(B\vee C)=\bigcup\limits_{j+k=d} \cR^i_j(B)\times  \cR^i_k(C),
 \quad \textrm{ if } i\geq 2,
\end{align*}
where $s=d-\dim B^1$ and $t=d-\dim C^1$.
\end{prop}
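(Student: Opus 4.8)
The plan is to dissect the Aomoto complex $(A,a)$ of the coproduct $A=B\vee C$ at a cohomology class $a=(a_1,a_2)\in A^1=B^1\oplus C^1$. Recall that, by definition of the coproduct of connected graded algebras, one has $A^i=B^i\oplus C^i$ for all $i\ge 1$, with $A^0=\C$, and that every product of a positive-degree element of $B$ with a positive-degree element of $C$ vanishes.

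The key structural observation I would record first is that, for each $i\ge 1$ and each $u=(u_B,u_C)\in B^i\oplus C^i$, the cross terms $a_1u_C$ and $a_2u_B$ fall among these vanishing products, so $\delta_a^i(u)=(a_1u_B,\,a_2u_C)$. Hence in every positive degree the Aomoto differential splits as a direct sum $\delta_a^i=\delta_{a_1}^i\oplus\delta_{a_2}^i$, and the only map that does \emph{not} split is $\delta^0\colon\C\to B^1\oplus C^1$, $1\mapsto(a_1,a_2)$, whose image is the \emph{diagonal} line $\C\cdot(a_1,a_2)$ rather than a split image.

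For $i\ge 2$, both $\delta^i$ and $\delta^{i-1}$ split, so $H^i(A,a)=H^i(B,a_1)\oplus H^i(C,a_2)$ and therefore $b_i(A,a)=b_i(B,a_1)+b_i(C,a_2)$. The second formula then follows from the elementary fact that, for nonnegative integers $\beta,\gamma$, one has $\beta+\gamma\ge d$ if and only if there exist integers $j+k=d$ with $\beta\ge j$ and $\gamma\ge k$; applied with $\beta=b_i(B,a_1)$, $\gamma=b_i(C,a_2)$ and the convention $\cR^i_d=A^1$ for $d\le 0$, this yields $\cR^i_d(A)=\bigcup_{j+k=d}\cR^i_j(B)\times\cR^i_k(C)$.

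The degree-one case is where the non-split $\delta^0$ must be handled with care, and this is the main obstacle. Writing $\epsilon_1=1$ if $a_1\neq 0$ and $\epsilon_1=0$ otherwise (so that $\dim\ker\delta_{a_1}^1=b_1(B,a_1)+\epsilon_1$), and similarly for $\epsilon_2$, I would compute from $H^1(A,a)=(\ker\delta_{a_1}^1\oplus\ker\delta_{a_2}^1)/\C\cdot(a_1,a_2)$ the single uniform formula $b_1(A,a)=b_1(B,a_1)+b_1(C,a_2)+\epsilon_1+\epsilon_2-\eta$, where $\eta=1$ if $a\neq 0$ and $\eta=0$ if $a=0$. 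The characteristic $+1$ in the statement is now visible: when both $a_1,a_2$ are nonzero the diagonal line kills only one dimension rather than two, so $b_1(A,a)=b_1(B,a_1)+b_1(C,a_2)+1$. Stratifying $A^1$ into the open set where both coordinates are nonzero, the slice $\{0\}\times C^1$, and the slice $B^1\times\{0\}$, and translating the condition $b_1(A,a)\ge d$ on each stratum (on the first, $b_1(B,a_1)+b_1(C,a_2)\ge d-1$; on the second, $b_1(C,a_2)\ge d-\dim B^1=s$; on the third, $b_1(B,a_1)\ge d-\dim C^1=t$) yields exactly the three pieces of the first formula. The last bookkeeping point I would check is that the origin is absorbed correctly: it lies in $\{0\}\times\cR^1_s(C)$ precisely when $s\le\dim C^1$, i.e. when $d\le\dim B^1+\dim C^1=b_1(A)$, which is exactly the condition $0\in\cR^1_d(A)$; the same holds symmetrically for $\cR^1_t(B)\times\{0\}$, so no origin is spuriously added or omitted.
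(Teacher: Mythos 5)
Your proof is correct and follows essentially the same route as the paper's: split the Aomoto complex of $B\vee C$ in positive degrees, derive the Betti number formula $b_1(A,a)=b_1(B,a_1)+b_1(C,a_2)+1$ when $a_1\ne 0$ and $a_2\ne 0$ (and without the $+1$ otherwise, and additivity in degrees $i\ge 2$), then stratify $A^1$ and translate the rank conditions. Your write-up simply makes explicit the bookkeeping with the non-split $\delta^0$ and the absorption of the origin, which the paper leaves as a "case-by-case analysis."
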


\begin{proof}
Pick an element $a=(a_1, a_2)$ in $A^1=B^1\oplus C^1$.  The Aomoto complex of $A$ 
splits (in positive degrees) as a direct sum of chain complexes, 
$(A^{+},a) \cong (B^{+},a_1) \oplus (C^{+},a_2)$. 
We then have formulas relating the Betti numbers of the respective 
Aomoto complexes:
\[
b_i(A,a)=\begin{cases}
b_i(B, a_1)+b_i(C, a_2)+1&\text{if $i=1$, 
and $a_1\ne 0$, $a_2\ne 0$,}\\
b_i(B, a_1)+b_i(C, a_2)&\text{otherwise.}
\end{cases}
\]
The claim follows by a case-by-case analysis of the 
above formula.
\end{proof}

\subsection{Pure braid groups}
\label{subsec:res pn}
Since $P_n$ admits a classifying space of dimension $n-1$, 
the resonance varieties $\cR_d^i(P_n)$ are empty for $i\geq n$.
In degree $i=1$,  the resonance varieties $\cR_d^i(P_n)$
are either trivial, or a union of $2$-dimensional subspaces.  

\begin{prop} [\cite{Cohen-Suciu99}]
\label{prop:res pn}
The first resonance variety of the pure braid group $P_n$
has decomposition into irreducible components given by
\begin{equation*}
 \cR_1^1(P_n)=    \bigcup_{1\leq i<j<k\leq n} L_{ijk}\cup 
 \bigcup_{1\leq i<j<k<l\leq n} L_{ijkl}, 
\end{equation*}
where 
\begin{align*}
&L_{ijk}=\big\{x_{ij}+x_{ik}+x_{jk}=0 \text{ and } x_{st}=0 
\textrm{~if~} \{s,t\}\not\subset \{i,j,k\}  \big\}, \\
&L_{ijkl}=\left\{
\begin{array}{l}
\sum_{\{p,q\}\subset \{i,j,k,l\}}x_{pq}=0,\:
x_{ij}=x_{kl},\: x_{jk}=x_{il},\: x_{ik}=x_{jl},\\[2pt]
x_{st}=0 \textrm{~if~}\{s,t\}\not\subset \{i,j,k,l\}
\end{array}
\right\}. 
\end{align*}
Furthermore, $\cR_d^1(P_n)=\{0\}$ for $2\le d\le \binom{n}{2}$, 
and  $\cR_d^1(P_n)=\emptyset$ for $d> \binom{n}{2}$.
\end{prop}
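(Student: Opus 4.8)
The plan is to read off $\cR^1_1(P_n)$ directly from Arnold's presentation \eqref{eq:arnold} of the Orlik--Solomon algebra $A_n=H^*(P_n;\C)$. Writing a general degree-one class as $a=\sum_{i<j}x_{ij}\,a_{ij}\in A_n^1$, recall from \S\ref{sub:resonance alg} that, for $a\ne 0$,
\[
b_1(A_n,a)=\dim_{\C}\ker\!\big(a\cdot\colon A_n^1\to A_n^2\big)-1,
\]
so $a\in\cR^1_1(P_n)$ exactly when some class $b\in A_n^1$ independent of $a$ satisfies $a\,b=0$ in $A_n^2$. The whole computation thus reduces to locating the failure of injectivity of the multiplication operator $a\cdot$, which is governed entirely by the quadratic Arnold relations. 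I would also exploit the $S_n$-symmetry permuting the $a_{ij}$: the families $L_{ijk}$ and $L_{ijkl}$ are the $S_n$-orbits of $L_{123}$ and $L_{1234}$, so it suffices to treat one representative of each.

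The inclusion ``$\supseteq$'' is an explicit verification. For a triple $\{i,j,k\}$, the classes $a_{ij},a_{ik},a_{jk}$ span the Orlik--Solomon algebra of a pencil of three concurrent lines, whose degree-two part is cut out by the single relation \eqref{eq:arnold}; a direct calculation shows that on the plane $x_{ij}+x_{ik}+x_{jk}=0$ the operator $a\cdot$ drops rank, producing a two-dimensional kernel, so $L_{ijk}\subseteq\cR^1_1(P_n)$. For a quadruple $\{i,j,k,l\}$, the six classes $a_{pq}$ span the Orlik--Solomon algebra of the four-strand braid arrangement, and I would produce an explicit $b$ in the plane $L_{ijkl}$ with $a\,b=0$. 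Conceptually this vanishing reflects the fact that this arrangement carries a net with the three classes $\{a_{ij},a_{kl}\}$, $\{a_{ik},a_{jl}\}$, $\{a_{il},a_{jk}\}$, whose four base points are the triple intersections; the pencil associated to that net yields the extra two-dimensional component $L_{ijkl}$. Both checks are finite computations in the relevant degree-two piece.

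The substance of the argument is the reverse inclusion, together with maximality of each listed plane. For this I would invoke two structural facts: first, that the arrangement complement is $1$-formal, so every component of $\cR^1_1$ is a linear subspace; and second, the classification of resonance components of Orlik--Solomon algebras via multinets on sub-arrangements. It then remains to determine which sub-arrangements of the braid arrangement support a multinet, and a direct inspection shows the only ones are the pencils of three concurrent lines (the rank-two flats $z_i=z_j=z_k$, giving $L_{ijk}$) and the four-strand braid sub-arrangements with the net above (giving $L_{ijkl}$). A more self-contained alternative, avoiding the multinet machinery, is a combinatorial elimination: expressing $a\cdot$ in the \emph{nbc}-monomial basis of $A_n^2$, one shows that a kernel element independent of $a$ forces the support $\{\{p,q\}:x_{pq}\ne 0\}$ to be closed under the Arnold circuits, after which a case analysis on the number of strands the support meets isolates exactly the two families. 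The hard part will be precisely this support analysis---ruling out supports spanning five or more strands, where one must track how the Arnold relations couple the coordinates and show that no kernel can survive.

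Finally, the depth statement follows from a uniform bound: for every $a\ne 0$ one has $\dim_{\C}\ker(a\cdot)\le 2$, i.e.\ $b_1(A_n,a)\le 1$. This I would establish by a localization argument---any kernel class independent of $a$ is already detected by the restriction of $a$ to a single pencil, so distinct pencils cannot contribute simultaneously---whence no nonzero class has $b_1\ge 2$. Consequently $\cR^1_d(P_n)=\{0\}$ whenever $d\ge 2$ and the origin still lies in the variety; since $b_1(A_n,0)=\dim A_n^1=\binom{n}{2}$, this gives $\cR^1_d(P_n)=\{0\}$ for $2\le d\le\binom{n}{2}$ and $\cR^1_d(P_n)=\emptyset$ for $d>\binom{n}{2}$.
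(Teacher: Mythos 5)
The paper does not prove this proposition; it is quoted verbatim from \cite{Cohen-Suciu99}, so there is no in-paper argument to measure you against. Your outline is nonetheless a correct and viable route, and it is a genuinely different one from the cited source. Cohen and Suciu compute $\cR^1_1(P_n)$ essentially by brute force on the quadratic data: they write down the linearized Alexander matrix (equivalently, the matrix of $\delta^1$ in the Arnold/nbc basis, as this paper does for $\PV_4^+$ in Lemma \ref{lem:resonancePV4+}), take Fitting ideals, and organize the answer via Falk's neighborly partitions; no formality or pencil machinery enters. You instead propose to use the linearity of components (via $1$-formality and Theorem \ref{thm:tangentcone}, or Libgober--Yuzvinsky) together with the Falk--Yuzvinsky classification of positive-dimensional components by multinets on sub-arrangements, reducing everything to the combinatorial fact that the only sub-arrangements of the braid arrangement supporting a multinet are the triple-point pencils and the four-strand sub-arrangements with their $(3,2)$-net. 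That buys conceptual clarity and explains \emph{why} the two families appear, at the cost of importing theorems that postdate (and partly rely on) the very computation being proved; the elementary elimination in the nbc basis that you offer as an alternative is much closer in spirit to the original proof.

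Two places where your sketch is thinner than it should be. First, the entire content of the reverse inclusion is concentrated in the step you defer: ruling out kernel elements whose support meets five or more strands (equivalently, showing the braid arrangement carries no further multinets). Second, your justification of the depth statement via ``localization to a single pencil'' is not accurate as phrased: the components $L_{ijkl}$ are not pencils (they sit over rank-three sub-arrangements, not rank-two flats), so the correct statement is that the components are pairwise transverse isotropic planes and that $\ker(a\cdot)$ equals the unique component through $a$ for every $a\neq 0$; establishing that equality is again the same hard computation, not a consequence of the inclusion $\supseteq$. With those two steps actually carried out, the argument is complete.
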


Recall that $P_n\cong \overline{P}_n\times \Z$, where $\overline{P}_n$ 
is the quotient of $P_n$ by its (infinite cyclic) center. 
Thus, the resonance varieties of the group $\overline{P}_n$ can be 
described in a similar manner, using Proposition \ref{prop:resProd}. 

\subsection{Resonance varieties of $\vP_3$}
\label{subsec:res vp3}

A partial computation of the resonance varieties $\cR_d^i(\vP_3)$
was done in a preprint version of \cite{Bardakov-M-V-W} for $i=1$ 
and $d=1, 5, 6$, as well as $i=2$ and $d=2, 6$.  
We use the preceding discussion to give a 
complete computation of all these varieties.

\begin{prop}
\label{prop:vP3}
For $d\geq 1$, the resonance varieties of the pure virtual braid group $\vP_3$ 
are given by
\begin{equation*}
\cR_d^i(\vP_3)\cong
\begin{cases}
  \cR_{d-1}^1(\overline{P}_4)\times \C,  & \textrm{ for } i=1, d\leq 5\\  
    \{0\} &\textrm{ for } i=1, d=6,\\  
 \cR_{d-2}^1(\overline{P}_4)\times \C   & \textrm{ for } i=2, d\leq 6 \\  
 \emptyset & otherwise.
\end{cases}
\end{equation*}
Consequently, $\cR_1^1(\vP_3)=\C^6$, while $\cR_2^1(\vP_3)$ 
is a union of five $3$-dimensional subspaces, pairwise intersecting 
in the $1$-dimensional subspace 
$\cR_3^1(\vP_3)=\cR_4^1(\vP_3)=\cR_5^1(\vP_3)$.
\end{prop}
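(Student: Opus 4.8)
The plan is to exploit the free product decomposition $\vP_3 \cong \overline{P}_4 \ast \Z$ from Lemma \ref{lem:isogroups}, which gives $H^*(\vP_3;\C) = B \vee C$ with $B = H^*(\overline{P}_4;\C)$ and $C = H^*(\Z;\C)$, and then feed this into the coproduct resonance formula of Proposition \ref{prop:res}. First I would record the elementary data. The algebra $C$ is the exterior algebra on a single degree-one generator, so a direct inspection of its Aomoto complex gives $\cR_1^1(C) = \{0\}$, $\cR_d^1(C) = \emptyset$ for $d \ge 2$, and $\cR_d^i(C) = \emptyset$ for all $i \ge 2$, $d \ge 1$ (together with the convention $\cR_d^\bullet(C) = C^1$ for $d \le 0$). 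For $B$, I would use $P_4 \cong \overline{P}_4 \times \Z$ to obtain $\Poin(\overline{P}_4, t) = \Poin(P_4, t)/(1+t) = (1+2t)(1+3t) = 1 + 5t + 6t^2$; thus $b_1 = 5$, $b_2 = 6$, $\overline{P}_4$ has cohomological dimension $2$, and $\chi(\overline{P}_4) = 2$. Combining Proposition \ref{prop:res pn} with the product formula of Proposition \ref{prop:resProd} (applied to $P_4 = \overline{P}_4 \times \Z$, where the center contributes $\cR^1_\bullet(\Z) \subseteq \{0\}$), I would read off that $\cR_1^1(\overline{P}_4)$ is a union of five $2$-planes meeting pairwise only at the origin, while $\cR_d^1(\overline{P}_4) = \{0\}$ for $2 \le d \le 5$ and $\emptyset$ for $d \ge 6$.

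For the degree-one varieties, I would substitute these values into the first formula of Proposition \ref{prop:res}. Using the nesting $\cR_1^1(B) \supseteq \cR_2^1(B) \supseteq \cdots$, the mixed union $\bigcup_{j+k=d-1}(\cR_j^1(B) \setminus \{0\}) \times (\cR_k^1(C) \setminus \{0\})$ collapses to $(\cR_{d-1}^1(\overline{P}_4) \setminus \{0\}) \times \C^*$, since only $k \le 0$ contributes a nonempty $\cR_k^1(C) \setminus \{0\} = \C^*$. For $d \le 5$ the remaining two terms are $\{0\} \times \C$ and $\cR_{d-1}^1(\overline{P}_4) \times \{0\}$, and a routine set-theoretic check shows the three pieces assemble into $\cR_{d-1}^1(\overline{P}_4) \times \C$, as claimed.

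The delicate case, and the one I expect to be the main obstacle, is $d = 6$. Here the naive pattern would predict $\cR_5^1(\overline{P}_4) \times \C = \{0\} \times \C$, a whole line, whereas the correct answer is the single point $\{0\}$. The discrepancy is entirely a statement about the jump locus at the origin, and the set-level formula of Proposition \ref{prop:res} does not by itself resolve it; I would instead return to the Betti-number identity established in the proof of that proposition, namely $b_1(A, a) = b_1(B, a_1) + b_1(C, a_2) + 1$ when $a_1, a_2 \ne 0$ (and the sum without the $+1$ otherwise). Since $b_1(\vP_3) = b_1(\overline{P}_4) + b_1(\Z) = 6$, one has $0 \in \cR_6^1$; but for any $a \ne 0$ the bounds $b_1(\overline{P}_4, a_1) \le 4$ off the origin (because $\cR_5^1(\overline{P}_4) = \{0\}$) and $b_1(\Z, a_2) \le 1$ force $b_1(\vP_3, a) \le 5$, so no nonzero point survives and $\cR_6^1(\vP_3) = \{0\}$.

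For $i \ge 2$, the second formula of Proposition \ref{prop:res}, combined with the vanishing of $\cR^i_\bullet(C)$ in positive depth for $i \ge 2$, collapses to $\cR_d^i(\vP_3) = \cR_d^i(\overline{P}_4) \times \C$. Since $\overline{P}_4$ has cohomology only in degrees $\le 2$, the groups with $i \ge 3$ vanish, and for $i = 2$ I would invoke Lemma \ref{lem:resonance2} with $\chi(\overline{P}_4) = 2$ and $b_2(\overline{P}_4) = 6$ to rewrite $\cR_d^2(\overline{P}_4) = \cR_{d-2}^1(\overline{P}_4)$ for $d \le 6$, yielding $\cR_d^2(\vP_3) = \cR_{d-2}^1(\overline{P}_4) \times \C$ and $\emptyset$ for $d > 6$. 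Finally, the consequently assertions follow by plugging in the description of $\cR^1_\bullet(\overline{P}_4)$ from the first paragraph: $\cR_1^1(\vP_3) = \C^5 \times \C = \C^6$; the variety $\cR_2^1(\vP_3) = \cR_1^1(\overline{P}_4) \times \C$ is a union of five $3$-planes, whose pairwise intersections are $\{0\} \times \C$ because the five $2$-planes in $\cR_1^1(\overline{P}_4)$ meet only at $0$; and this common line equals $\cR_3^1(\vP_3) = \cR_4^1(\vP_3) = \cR_5^1(\vP_3) = \{0\} \times \C$, since $\cR_d^1(\overline{P}_4) = \{0\}$ for $2 \le d \le 5$.
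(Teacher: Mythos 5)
Your argument is correct and follows essentially the same route as the paper: the free product decomposition $\vP_3\cong\overline{P}_4\ast\Z$ fed into Proposition \ref{prop:res}, with Lemma \ref{lem:resonance2} handling degree $2$ and Proposition \ref{prop:res pn} (via $P_4\cong\overline{P}_4\times\Z$) supplying the components; you simply make explicit the bookkeeping that the paper's proof leaves implicit. One small correction: the set-level formula of Proposition \ref{prop:res} \emph{does} resolve the case $d=6$ on its own, since there $s=d-\dim B^1=1$ and the corresponding term is $\{0\}\times\cR^1_1(C)=\{0\}$ rather than the $\{0\}\times\C$ occurring for $d\le 5$, so your detour through the Betti-number identity, while perfectly valid, is not actually needed.
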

 
\begin{proof}
By Lemma \ref{lem:isogroups}, we have an isomorphism $\vP_3\cong\overline{P}_4*\Z$,
which yields an isomorphism $H^1(\vP_3;\C)\cong H^1(\overline{P}_4;\C)\oplus \C$.
Under this identification, Proposition \ref{prop:res} shows that 
$\cR^1_d(\vP_3)\cong \cR^1_{d-1}(\overline{P}_4)\times \C$ for $d\leq 5$, and 
$\cR^1_6(\vP_3)=\{0\}$. 

The same proposition also shows that  
$\cR^2_d(\vP_3)\cong \cR^2_{d}(\overline{P}_4)\times \C$. 
On the other hand, by Lemma \ref{lem:resonance2},
we have that $\cR_d^2(\overline{P}_4)=\cR_{d-2}^1(\overline{P}_4)$ 
for $d\leq 6$, since $\overline{P}_4$ admits a $2$-dimensional classifying 
space, and $\chi(\overline{P}_4)=2$.
Finally, the description of the resonance varieties $\cR^1_d(\vP_3)$ for $d\le 6$ 
follows from  Proposition \ref{prop:res pn}.   
 \end{proof}

Let $a_{12}, a_{13}, a_{23},a_{21}, a_{31}, a_{32}$ be 
the basis of $H^1(\PV_3,\C)$ specified in Theorem \ref{thm:BEER}, 
and let $x_{ij}$ the corresponding coordinate functions on this 
affine space. 
Tracing through the isomorphisms  
$H^1(\PV_3,\C)\cong H^1(\overline{P}_4,\C)\times \C \cong 
H^1(P_4,\C)$, we see that the components of $\cR^1_2(\PV_3)$ 
have equations 
\begin{align*}
&\{x_{12}-x_{23}=x_{12}+x_{32}=x_{12}+x_{21}=0\}, 
&\{x_{13}+x_{23}=x_{12}+x_{32}=x_{21}+x_{31}=0\}, \\[-2pt]
&\{x_{13}+x_{23}=x_{13}-x_{32}=x_{13}+x_{31}=0\}, 
&\{x_{12}+x_{13}=x_{12}+x_{21}=x_{12}-x_{31}=0\}, \\[-2pt]
&\{x_{12}+x_{13}=x_{23}+x_{21}=x_{31}+x_{32}=0\}, 
\end{align*} 
while their common intersection is the line 
$\{x_{12}=-x_{21}=-x_{13}=x_{31}=x_{23}=-x_{32}\}$.

\subsection{Resonance varieties of $\PV^+_4$}
\label{subsec:res vp4plus}

We now switch to the upper-triangular group $\PV^+_4$, and 
compute its degree $1$ resonance varieties.   Let 
$e_{12}, e_{13}, e_{23},e_{14}, e_{24}, e_{34}$ be 
the basis of $H^1(\PV_4^+,\C)$ specified in Corollary \ref{cor:presentationCoho}, 
and let $x_{ij}$ be the corresponding coordinate functions on this 
affine space. 

\begin{lemma}
\label{lem:resonancePV4+}
The depth $1$ resonance variety $\cR_1^1(\PV_4^+)$ is the irreducible, 
$4$-dimensional subvariety of degree $6$ inside 
$H^1(\PV_4^+,\C)=\C^6$ defined by the equations
\[
\left\{
\begin{array}{ll}
x_{12}x_{24}(x_{13}+x_{23})+
x_{13}x_{34}(x_{12}-x_{23})-x_{24}x_{34}(x_{12}+x_{13})=0,\\
x_{12}x_{23}(x_{14}+x_{24})+x_{12}x_{34}(x_{23}-x_{14})+
x_{14}x_{34}(x_{23}+x_{24})=0,\\
x_{13}x_{23}(x_{14}+x_{24})+x_{14}x_{24}(x_{13}+x_{23}) 
+x_{34}(x_{13}x_{23}-x_{14}x_{24})=0,\\
x_{12}(x_{13}x_{14}-x_{23}x_{24})+x_{34}(x_{13}x_{23}-x_{14}x_{24})=0.
\end{array}\right.
\]
The depth $2$ resonance variety $\cR^1_2(\PV_4^+)$ consists of $13$ lines in $\C^6$, 
spanned by the vectors
\[
\begin{array}{lll}
e_{12},\:\ e_{13},\:\ e_{23},
&e_{14},\:\ e_{24},\:\ e_{34}, 
&e_{12}-e_{13}+e_{23},
\\
e_{12}-e_{14}+e_{24},
&e_{13}-e_{14}+e_{34},
&e_{23}-e_{24}+e_{34},
\\
e_{13}-e_{23}-e_{14}+e_{24},
&e_{12}+e_{23}-e_{14}+e_{34},
&e_{12}-e_{13}+e_{24}-e_{34}. 
\end{array}
\]
Finally, $\cR^1_d(\PV_4^+)=\{0\}$  if $3\le d\le 6$, and $\cR^1_d(\PV_4^+)=\emptyset$  
if $d\ge 7$. 
 \end{lemma}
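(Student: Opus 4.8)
The plan is to compute all of these varieties directly from the Aomoto complex of the algebra $A=H^*(\PV^+_4;\C)$, using the minor-ideal description of resonance recalled in \S\ref{sub:resonance alg}. First I would record the Betti numbers from the Hilbert series \eqref{eq:hilbseriesvP}--\eqref{eq:stirling2}: $b_0=1$, $b_1=S(4,3)=6$, $b_2=S(4,2)=7$, and $b_3=S(4,1)=1$, so that $A^1=\C^6$ and $A^2=\C^7$. Using the simplified presentation of $A$ from Corollary \ref{cor:presentationCoho}, I would fix the ordered basis $e_{12},e_{13},e_{23},e_{14},e_{24},e_{34}$ of $A^1$, reduce all products $e_{ij}e_{kl}$ modulo the $8$ quadratic relations $e_{ij}(e_{ik}-e_{jk})$ and $(e_{ij}-e_{ik})e_{jk}$ (two for each triple $i<j<k$ in $\{1,2,3,4\}$, cutting the $15$-dimensional degree-$2$ part of the exterior algebra down to the expected $b_2=7$), and thereby fix an explicit basis of $A^2$. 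This yields the Aomoto differential $\delta^1_a\colon A^1\to A^2$, i.e.\ left multiplication by $a=\sum x_{ij}e_{ij}$, as an explicit $7\times 6$ matrix with entries linear in the coordinates $x_{ij}$.

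Next I would pin down the generic rank of $\delta^1_a$. Since $a^2=0$ forces $a\in\ker\delta^1_a$, we have $b_1(A,a)=\dim\ker\delta^1_a-1=5-\rank\delta^1_a$ for $a\neq 0$; as $b_1(A,a)$ vanishes for generic $a$, the generic rank is $5$. Hence, for $a\neq 0$, one has $a\in\cR^1_d(\PV^+_4)$ if and only if $\rank\delta^1_a\le 5-d$, that is, if and only if all $(6-d)\times(6-d)$ minors of $\delta^1$ vanish at $a$; the origin lies in $\cR^1_d$ precisely when $d\le b_1=6$. Thus $\cR^1_1$ is cut out by the ideal $I_5(\delta^1)$, $\cR^1_2$ by $I_4(\delta^1)$, and $\cR^1_d$ for $d\ge 3$ by $I_{6-d}(\delta^1)$.

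For the depth-$1$ variety I would compute a primary decomposition (carried out symbolically) of the degree-$5$ ideal $I_5(\delta^1)$, check that its radical is generated by the four displayed cubics, and then verify that the resulting scheme is irreducible of dimension $4$ and degree $6$ (for instance by computing the dimension and multiplicity of the quotient ring, or by exhibiting a rational parametrization of the cone). For the depth-$2$ variety I would decompose $I_4(\delta^1)$ and identify its minimal primes with the $13$ lines listed, each the span of one of the given integer vectors; the symmetry of the presentation under relabelings of $\{1,2,3,4\}$ can be used to organize and cross-check the list. Finally, for $d\ge 3$ I would show $\cR^1_d=\{0\}$ or $\emptyset$: since $\cR^1_3\subseteq\cR^1_2$, it suffices to evaluate $\rank\delta^1_v$ at the $13$ spanning vectors $v$ and confirm it equals $3$ in each case, so that no nonzero point survives to depth $3$; the statements $\cR^1_d=\{0\}$ for $3\le d\le 6$ and $\cR^1_d=\emptyset$ for $d\ge 7$ then follow from the rule $0\in\cR^1_d\iff d\le 6$.

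The main obstacle is the depth-$1$ computation: certifying that the degree-$5$ minor ideal $I_5(\delta^1)$ cuts out, as a \emph{reduced} scheme, precisely the irreducible four-dimensional variety of degree $6$ defined by the four cubics. This is the heart of the lemma and supplies the geometric input needed later for the non-$1$-formality of $\PV^+_4$ via the Tangent Cone Theorem, so I would take care to display the matrix $\delta^1$ explicitly enough that the primary decomposition is reproducible, rather than resting on an opaque machine computation.
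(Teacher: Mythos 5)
Your proposal is correct and follows essentially the same route as the paper: one writes down the explicit $7\times 6$ matrix of the Aomoto differential $\delta^1$ for the presentation of $H^*(\PV^+_4;\C)$ from Corollary \ref{cor:presentationCoho}, and then computes the elementary (minor) ideals and their primary decompositions symbolically (the paper uses Macaulay2) to read off each $\cR^1_d$. The extra bookkeeping you supply --- the generic rank being $5$, the translation between depth $d$ and the size of the minors, and the treatment of the origin --- is implicit in the paper's setup in \S\ref{sub:resonance alg} and is handled correctly.
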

 
\begin{proof}
Let $A=H^*(\PV^+_4,\C)$ be the cohomology algebra of $\PV^+_4$, as 
described in Corollary \ref{cor:presentationCoho}.  The differential 
$\delta^1\colon A^1\otimes S \to A^2\otimes S$ in the cochain complex 
\eqref{eq:cc} is then given by 
\small
\begin{equation*}
{
\delta^1=\begin{pmatrix}
{-{x}_{34}}&
      0&
      0&
      0&
      0&
      {x}_{12}\\
      -{x}_{13}-{x}_{23}&
      {x}_{12}-{x}_{23}&
      {x}_{12}+{x}_{13}&
      0&
      0&
      0\\
      0&
      {-{x}_{24}}&
      0&
      0&
      {x}_{13}&
      0\\
      0&
      0&
      {-{x}_{14}}&
      {x}_{23}&
      0&
      0\\
      -{x}_{14}-{x}_{24}&
      0&
      0&
      {x}_{12}-{x}_{24}&
      {x}_{12}+{x}_{14}&
      0\\
      0&
      -{x}_{14}-{x}_{34}&
      0&
      {x}_{13}-{x}_{34}&
      0&
      {x}_{13}+{x}_{14}\\
      0&
      0&
      -{x}_{24}-{x}_{34}&
      0&
      {x}_{23}-{x}_{34}&
      {x}_{23}+{x}_{24}
\end{pmatrix} \, .
}
\end{equation*} 
\normalsize
Computing with the aid of Macaulay2 \cite{M2} the elementary ideals of 
this matrix and finding their primary decomposition leads to the stated 
conclusions. 
\end{proof} 
  
The degree $1$, depth $1$ resonance varieties of the virtual pure braid groups 
$\PV^+_n$ and $\PV_n$ can be computed in a similar fashion, at least for small values 
of $n$. For instance, $\cR_{1}^1(vP^+_5)$ has $15$ irreducible components 
of dimension $4$, while $\dim \cR_{1}^1(vP^+_n)=n-1$ for $n=6,7,8$.  
In general, though, these varieties are not equidimensional.  
For example, $\cR_{1}^1(vP_4)$ has seven irreducible components of dimension 
$4$, three irreducible components of dimension $5$, and four irreducible components of 
dimension $6$.  

\section{Graded Lie algebras and graded formality}
\label{sect:liealgebras}

We now discuss the holonomy and associated graded Lie algebras 
of the pure (virtual) braid groups. For relevant background, we refer to \cite{SW1} and 
references therein. 

\subsection{Associated graded Lie algebras}
\label{subsec:grg}

Let $G$  be a finitely generated group. There are several 
Lie algebras associated to such a group.  
The most classical one is the associated graded Lie 
algebra (over $\C$), 
\begin{equation}
\label{eq:grg}
\gr(G):=\bigoplus\limits_{k\geq 1}(\Gamma_kG/\Gamma_{k+1}G)\otimes_{\Z}\C,
\end{equation}
where $\{\Gamma_kG\}_{k\geq 1}$ is the \emph{lower central series}, 
defined inductively by $\Gamma_1G=G$ and
$\Gamma_{k+1}G=[\Gamma_kG,G], k\geq 1$, and the Lie bracket $[x,y]$ is 
induced from the group commutator $[x,y]=xyx^{-1}y^{-1}$. 
We denote by $U=U(\gr(G))$ the universal enveloping algebra 
of $\gr(G)$. By the
Poincar\'{e}--Birkhoff--Witt theorem, we have that
\begin{equation}
\label{eq:Koszuldual}
\prod_{k=1}^{\infty} (1-t^k)^{-\phi_k(G)} =  \Hilb(U,t).
\end{equation}

The next two lemmas give explicit ways to compute the LCS ranks of a group $G$, 
under a common rationality hypothesis for the Hilbert series of the graded Lie algebra 
$U$. 

\begin{lemma}
\label{lem:PBW}
Suppose there is a polynomial $f(t)=1+\sum_{i=1}^n b_it^i\in \Z[t]$ 
such that 
\begin{equation}
\label{eq:ft}
\Hilb(U(\gr(G)),-t)\cdot f(t)=1.
\end{equation}
Then the LCS ranks of $G$ are given by 
\begin{equation}
\label{eq:lcs ranks}
\phi_k(G)=\dfrac{1}{k} \sum_{d|k}\mu\left(\frac{k}{d}\right)\left[
\sum_{m_1+2m_2+\cdots+nm_n=d} (-1)^{s_n}  d(m!) 
\prod_{j=1}^{n} \frac{(b_j)^{m_j}}{ (m_j) !}  \right],
\end{equation}
where $0\leq m_j\in \Z$, $s_n=\sum_{i=1}^{[n/2]}m_{2i}$, 
$m=\sum_{i=1}^{n}m_{i}-1$ and $\mu$ is the M\"{o}bius function.
\end{lemma}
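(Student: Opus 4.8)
The plan is to start from the Poincar\'e--Birkhoff--Witt identity \eqref{eq:Koszuldual}, which expresses the Hilbert series of $U=U(\gr(G))$ as the infinite product $\prod_{k\geq 1}(1-t^k)^{-\phi_k(G)}$. Taking logarithms converts this product into a sum, and the hypothesis \eqref{eq:ft}, namely $\Hilb(U,-t)\cdot f(t)=1$, will let me replace $\Hilb(U,t)$ by a rational function. The first concrete step is therefore to write $\log\Hilb(U,t)=\sum_{k\geq 1}\phi_k(G)\bigl(-\log(1-t^k)\bigr)$ and to expand each $-\log(1-t^k)=\sum_{d\geq 1}t^{kd}/d$, so that the coefficient-extraction becomes a divisor sum. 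This is the standard Witt-type inversion, and I expect M\"obius inversion to appear precisely because we are inverting the relation between $\phi_k$ and $\log\Hilb(U,t)$ over the divisor lattice.

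Next I would handle the rational-function side. From \eqref{eq:ft} we have $\Hilb(U,t)=1/f(-t)$, so I must compute $\log(1/f(-t))=-\log f(-t)=-\log\bigl(1+\sum_{i=1}^n b_i(-t)^i\bigr)$. Writing $f(-t)=1+\sum_{i=1}^n b_i(-1)^i t^i$, I would expand $-\log(1+w)=\sum_{m\geq 1}(-1)^m w^m/m$ with $w=\sum_{i=1}^n (-1)^i b_i t^i$, then expand the powers $w^m$ by the multinomial theorem. This is where the product $\prod_{j=1}^n (b_j)^{m_j}/(m_j)!$ and the combinatorial factor $m!$ (with $m=\sum m_i-1$) should emerge: grouping the multinomial expansion by the composition $(m_1,\dots,m_n)$ with total degree $\sum_j j\,m_j=d$ produces exactly the constraint $m_1+2m_2+\cdots+nm_n=d$, and the sign $(-1)^{s_n}$ with $s_n=\sum_i m_{2i}$ records which of the $(-1)^i$ factors are odd powers. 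I would verify that the $1/m$ from the logarithm combines with the $(m-1)!$ from the multinomial to give the displayed $m!$ after accounting for the shift $m=\sum m_i-1$.

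The final step is to match the two expansions coefficient-by-coefficient. Equating the coefficient of $t^N$ in $\log\Hilb(U,t)$ computed both ways gives $\sum_{k\mid N}k\,\phi_k(G)=[\text{coefficient of }t^N\text{ in }-\log f(-t)]$, and then classical M\"obius inversion over the divisors of $k$ yields the stated formula \eqref{eq:lcs ranks} with the factor $\tfrac1k\sum_{d\mid k}\mu(k/d)$. I would write the bracketed inner sum as the coefficient of $t^d$ on the rational side, which I computed in the previous paragraph, thereby assembling the full expression.

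The main obstacle I anticipate is bookkeeping the signs and the factorials correctly, rather than any conceptual difficulty: tracking how the $(-1)^i$ from evaluating $f$ at $-t$, the $(-1)^m$ from the logarithm, and the parity count $s_n$ interact is delicate, and it is easy to be off by an overall sign or to misplace the $m=\sum m_i-1$ shift against the $1/m$ coming from the log expansion. I would guard against this by checking the formula against a small known case (for instance a free group $F_n$, where $\Hilb(U,t)=(1-nt)^{-1}$, so $f(t)=1-nt$, $b_1=n$, and \eqref{eq:lcs ranks} must reduce to the classical Witt formula $\phi_k(F_n)=\tfrac1k\sum_{d\mid k}\mu(k/d)n^d$), and only then present the general derivation.
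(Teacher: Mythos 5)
Your proposal is correct and follows essentially the same route as the paper: take (minus) the logarithm of the PBW identity $\prod_{k\ge1}(1-t^k)^{\phi_k(G)}=f(-t)$, expand the right-hand side by the multinomial theorem (the $1/w$ from the log against the multinomial coefficient $w!/\prod m_j!$ yielding the $m!=(\sum m_i-1)!$ factor), compare coefficients of $t^k$ to get the divisor-sum relation, and apply M\"obius inversion. The free-group sanity check is a sensible addition but not part of the paper's argument.
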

\begin{proof}
From formula \eqref{eq:Koszuldual} and assumption \eqref{eq:ft}, we have that 
\begin{equation}
\label{eq:lcs g}
\prod_{k=1}^{\infty} (1-t^k)^{\phi_k(G)} =  1+\sum_{i=1}^n  b_i(-t)^i.
\end{equation}

Taking logarithms on both sides, we find that 
\begin{equation}
\label{eq:sum pig}
\sum_{j=1}^{\infty}\sum_{s=1}^{\infty}\phi_s(G) \frac{t^{sj}}{j}=
\sum_{w=1}^{\infty}\frac{1}{w} \left(-\sum_{i=1}^n b_i(-t)^{i}\right)^w .
\end{equation}

Comparing  the coefficients of $t^k$ on each side gives 
\begin{equation}
\label{eq:sum dk}
\sum_{d|k} \phi_d(G) \frac{d}{k}=
\sum_{m_1+2m_2+\cdots+nm_n=k} (-1)^{s_n}  (m!) 
\prod_{j=1}^{n} \frac{(b_j)^{m_j}}{ (m_j) !},
\end{equation}
where $s_n=\sum_{i=1}^{[n/2]}m_{2i}$ and $m=\sum_{i=1}^{n}m_{i}-1$.
Finally, multiplying both sides by $k$ and using the M\"{o}bius inversion formula 
yields the desired formula.
\end{proof}

An alternative way of computing the LCS ranks of a group $G$ satisfying 
the assumptions from Lemma \ref{lem:PBW} was given by Weigel in \cite{Weigel15}.
 
\begin{lemma}[\cite{Weigel15}]
\label{lem:Weigel}
Suppose there is a polynomial $f(t)=1+\sum_{i=1}^n b_it^i\in \Z[t]$ 
such that $\Hilb(U(\gr(G),-t))\cdot f(t)=1$.  Let $z_1, \dots , z_n$ be 
the (complex) roots of $f(-t)$.  Then the LCS ranks of $G$ are given by 
\begin{equation}
\label{eq:weigel}
\phi_k(G)=\dfrac{1}{k} \sum_{1\leq i\leq n}\sum_{d|k}
\mu\left(\frac{k}{d}  \right)  \dfrac{1}{z_i^d} .
\end{equation}
\end{lemma}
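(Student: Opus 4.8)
The plan is to prove Lemma \ref{lem:Weigel} by showing that formula \eqref{eq:weigel} is equivalent to the Möbius-inverted expression already established in the course of proving Lemma \ref{lem:PBW}. The starting point is equation \eqref{eq:lcs g}, which under the hypothesis $\Hilb(U(\gr(G)),-t)\cdot f(t)=1$ reads $\prod_{k\ge 1}(1-t^k)^{\phi_k(G)}=f(-t)$. First I would factor the right-hand side over $\C$: since $f$ has constant term $1$, we may write $f(-t)=\prod_{i=1}^n (1-t/z_i)$, where $z_1,\dots,z_n$ are the roots of $f(-t)$ (equivalently, $1/z_i$ are the reciprocal roots, and the constant-term normalization guarantees this product form with no leading constant).

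Next I would take logarithms of both sides, exactly as in the proof of Lemma \ref{lem:PBW}, but now exploiting the factored form on the right. On the left, $\log \prod_{k}(1-t^k)^{\phi_k(G)}=\sum_k \phi_k(G)\log(1-t^k)=-\sum_{k\ge 1}\sum_{j\ge 1}\phi_k(G)\tfrac{t^{kj}}{j}$. On the right, $\log\prod_{i=1}^n(1-t/z_i)=-\sum_{i=1}^n\sum_{w\ge 1}\tfrac{1}{w}\,\tfrac{t^w}{z_i^w}$. Comparing the coefficients of $t^N$ on both sides gives
\begin{equation}
\label{eq:weigel-compare}
\sum_{d\mid N}\phi_d(G)\,\frac{d}{N}=\frac{1}{N}\sum_{i=1}^n \frac{1}{z_i^N},
\end{equation}
where on the left I have set $N=kj$ and collected terms by the divisor $d=k$ of $N$. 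Multiplying through by $N$ isolates the Dirichlet-type convolution $\sum_{d\mid N} d\,\phi_d(G)=\sum_{i=1}^n z_i^{-N}$.

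Finally I would apply Möbius inversion. Writing $g(N):=N\,\phi_N(G)$ and $G(N):=\sum_{d\mid N} g(d)=\sum_{i=1}^n z_i^{-N}$, the standard inversion formula yields $g(k)=\sum_{d\mid k}\mu(k/d)\,G(d)$, that is,
\begin{equation}
\label{eq:weigel-invert}
k\,\phi_k(G)=\sum_{d\mid k}\mu\!\left(\frac{k}{d}\right)\sum_{i=1}^n \frac{1}{z_i^{d}}.
\end{equation}
Dividing by $k$ and interchanging the two finite sums produces precisely \eqref{eq:weigel}. The argument is essentially a cleaner route through the same logarithmic identity used for Lemma \ref{lem:PBW}; the only genuine care is in justifying the factorization $f(-t)=\prod_i(1-t/z_i)$ with the correct normalization and in matching indices when collecting the $t^N$ coefficients, so that the divisor sum on the left lines up with $d\mid N$. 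The main obstacle, such as it is, lies in this bookkeeping of the double sum and confirming the reciprocal-root convention, rather than in any deep structural point — the PBW identity \eqref{eq:Koszuldual} and hypothesis \eqref{eq:ft} do all the heavy lifting, and the result is a formal manipulation of generating-function logarithms followed by Möbius inversion.
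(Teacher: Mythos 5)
Your argument is correct. Note, however, that the paper does not actually prove Lemma \ref{lem:Weigel}: it is quoted from Weigel's paper \cite{Weigel15}, so there is no in-text proof to compare against. What you have written is essentially a self-contained derivation that runs parallel to the paper's proof of the companion Lemma \ref{lem:PBW}: both start from the identity $\prod_{k\ge 1}(1-t^k)^{\phi_k(G)}=f(-t)$ coming from PBW \eqref{eq:Koszuldual} and the hypothesis \eqref{eq:ft}, take logarithms, compare coefficients of $t^N$ to get $\sum_{d\mid N} d\,\phi_d(G)=(\text{coefficient data})$, and finish with M\"obius inversion. The only difference is how the right-hand side is expanded: the paper's Lemma \ref{lem:PBW} expands $\log f(-t)$ via the multinomial theorem, producing the sum over partitions $m_1+2m_2+\cdots+nm_n=d$ in \eqref{eq:lcs ranks}, whereas you factor $f(-t)=\prod_{i=1}^n(1-t/z_i)$ (legitimate since the constant term is $1$, which also forces $z_i\ne 0$; one should tacitly assume $b_n\ne 0$ so that there are exactly $n$ roots) and obtain the power sums $\sum_i z_i^{-d}$ instead. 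This makes transparent why \eqref{eq:lcs ranks} and \eqref{eq:weigel} are two faces of the same computation. Your bookkeeping of the double sum and the reciprocal-root normalization is accurate, so the proof stands as written.
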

 
\subsection{Holonomy Lie algebras}
\label{subsec:holo}

Let $A=\bigoplus_{i\ge 0} A_i$ be a graded, graded-commutative, 
algebra over $\C$. We shall assume 
that $A_0=\C$, and $\dim A_1<\infty$. 
Denote by $A_i^*$ the dual vector space to $A_i$. 
We let $\partial_A\colon A_2^*\rightarrow A_1^*\wedge A_1^*$ 
be the dual of the product $A_1\wedge A_1\to A_2$. 
Let  $\bL(A_1^*)$ be the free, complex Lie algebra on $A_1^*$,
with the degree $2$ piece identified with $A_1^*\wedge A_1^*$.
We define the \emph{holonomy Lie algebra}\/ of $A$ as the quotient 
\begin{equation}
\label{eq:holo lie}
\fh(A)=\bL(A_1^*)/\langle\im \partial_A\rangle.
\end{equation}

Now let $G$ be a finitely generated group.  
The holonomy Lie algebra of $G$, denoted by $\fh(G)$, 
is defined to be the holonomy of its cohomology algebra $A=H^*(G;\C)$. 
The identification of the $\C$-vector space $H_{\C}:=H_1(G,\C)$ with 
$\gr_1(G)=G/G'\otimes \C$ 
extends to a surjective morphism of graded Lie algebras, $\bL(H_{\C})\surj \gr(G)$, 
which sends $\im(\partial_G)\subset  \bL^2(H_{\C})$ to zero.  In this 
fashion, we obtain an epimorphism 
\begin{equation}
\label{eq:holonomysurj}
\xymatrix{\Psi\colon \fh(G)\ar@{->>}[r]&  \gr(G)},
\end{equation}
which is readily seen to be an isomorphism in degrees $1$ and $2$.  
Thus, the holonomy Lie algebra $\fh(G)$ can be viewed as the quadratic 
closure of $\gr(G)$. 

The group $G$ is said to be {\em graded-formal}\/ if 
$\Psi$ is, in fact, an isomorphism of graded Lie algebras. 
As noted in \cite{SW1}, the group $G$ is graded-formal if and only if 
$\gr(G)$ is quadratic.
Write 
\begin{equation}
\label{eq:phik}
\phi_k(G)=\dim \gr_k(G) \quad \text{and}\quad  \bar{\phi}_k(G)=\dim \fh_k(G).
\end{equation}

Clearly $\phi_k(G) \le \bar\phi_k(G)$, with equality for $k=1$ and $2$.  
Moreover, the group $G$ is graded-formal if and only if
$\phi_k(G)=\bar{\phi}_k(G)$ for all $k\geq 1$.
 
\begin{prop}
\label{prop:kd}
Suppose the group $G$ is graded-formal, and its cohomology 
algebra, $A=H^*(G;\C)$, is Koszul. Then 
$\Hilb(U(\gr(G)),-t) \cdot \Hilb(A,t)=1$.
\end{prop}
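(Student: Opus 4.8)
The plan is to combine the Poincaré--Birkhoff--Witt identity \eqref{eq:Koszuldual} with the defining property of Koszul algebras. The starting point is that, by hypothesis, the cohomology algebra $A=H^*(G;\C)$ is Koszul. A fundamental feature of Koszul algebras is that the Hilbert series of $A$ and that of its Koszul dual $A^{!}$ are related by the reciprocity formula $\Hilb(A,t)\cdot \Hilb(A^{!},-t)=1$. Thus, the first step is to recall this Koszul duality relation and to identify the relevant dual object in Lie-theoretic terms.

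The key bridge is the identification of $U(\gr(G))$ with the Koszul dual algebra $A^{!}$. Since $G$ is graded-formal, the surjection $\Psi\colon \fh(G)\surj \gr(G)$ from \eqref{eq:holonomysurj} is an isomorphism, so $\gr(G)$ is the quadratic Lie algebra $\fh(A)$ defined in \eqref{eq:holo lie}. The universal enveloping algebra of the holonomy Lie algebra of a quadratic algebra $A$ is precisely the quadratic dual $A^{!}$; this is a standard fact relating holonomy Lie algebras and Koszul duality, and graded-formality is exactly what guarantees that $U(\gr(G))$ coincides with $U(\fh(A))=A^{!}$ rather than merely surjecting onto it. Therefore I would assemble the argument as: graded-formality gives $U(\gr(G))=U(\fh(A))$; the quadratic-dual identity gives $U(\fh(A))=A^{!}$; and Koszulness of $A$ gives the Hilbert series reciprocity.

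Concretely, the chain of equalities reads
\begin{equation*}
\Hilb(U(\gr(G)),-t)=\Hilb(A^{!},-t)=\Hilb(A,t)^{-1},
\end{equation*}
where the first equality is the identification of enveloping algebra with quadratic dual under graded-formality, and the second is the Koszul reciprocity formula for $A$. Multiplying through by $\Hilb(A,t)$ yields the asserted identity $\Hilb(U(\gr(G)),-t)\cdot \Hilb(A,t)=1$.

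The main obstacle, and the step requiring the most care, is justifying the identification $U(\gr(G))\cong A^{!}$ as \emph{graded} algebras with matching Hilbert series. The quadratic-dual statement $U(\fh(A))\cong A^{!}$ requires that $A$ be generated in degree $1$ with relations in degree $2$ (so that $A^{!}$ is its honest quadratic dual), and the passage from $\gr(G)$ to $\fh(A)$ uses graded-formality in an essential way; without it one only has the surjection $\fh(G)\surj\gr(G)$ and the Hilbert series would satisfy an inequality rather than an equality. I would therefore want to cite the precise form of the holonomy-Lie-algebra/Koszul-duality correspondence (as developed in the background references such as \cite{SW1}) to pin down that the graded pieces match dimension-by-dimension, after which the Hilbert-series bookkeeping is routine.
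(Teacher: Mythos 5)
Your proposal is correct and follows essentially the same route as the paper: graded-formality identifies $\gr(G)$ with the quadratic Lie algebra $\fh(A)$, the standard fact that $U(\fh(A))\cong A^{!}$ for a quadratic algebra $A$ (the paper cites \cite{PY} for this) identifies the enveloping algebra with the quadratic dual, and Koszulness of $A$ supplies the reciprocity formula $\Hilb(A^{!},-t)\cdot\Hilb(A,t)=1$. No gaps.
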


\begin{proof}
Let $U=U(\gr(G))$.   and let $U^{!}$ be its quadratic dual. 
By assumption, $\gr(G)=\h(A)$ is a quadratic Lie algebra. 
Thus, $U$ is a quadratic algebra.  Furthermore, since $A$ is 
also quadratic, $U=U(\h(A))$ is isomorphic to 
$A^{!}$, the quadratic dual of $A$, see \cite{PY}.  

On the other hand, since $A$ is Koszul, 
the Koszul duality formula gives 
$\Hilb(A^{!},-t)\cdot \Hilb(A,t)=1$. The conclusion follows.
\end{proof}

\begin{corollary}
\label{eq:koszul}
Suppose the group $G$ is graded-formal, and its cohomology 
algebra is Koszul and finite-dimensional. Then 
the LCS ranks $\phi_k(G)$ are given by formula \eqref{eq:lcs ranks}, 
where $b_i=b_i(G)$.
\end{corollary}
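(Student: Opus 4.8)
The plan is to reduce the final statement---Corollary~\ref{eq:koszul}---to the already-established Lemma~\ref{lem:PBW} by producing the polynomial $f(t)$ required as input to that lemma. First I would invoke Proposition~\ref{prop:kd}: since $G$ is graded-formal and its cohomology algebra $A=H^*(G;\C)$ is Koszul, we obtain the key identity
\begin{equation*}
\Hilb(U(\gr(G)),-t)\cdot \Hilb(A,t)=1.
\end{equation*}
The point is that this is exactly the rationality hypothesis \eqref{eq:ft} of Lemma~\ref{lem:PBW}, provided we set $f(t)=\Hilb(A,t)$. So the crux of the argument is to verify that $\Hilb(A,t)$ is a polynomial of the shape $f(t)=1+\sum_{i=1}^n b_i t^i$ with $b_i\in\Z$, and that these coefficients $b_i$ are precisely the Betti numbers $b_i(G)$.

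Next I would check each of these requirements. Because $A$ is assumed finite-dimensional, its Hilbert series $\Hilb(A,t)=\sum_{i\ge 0} b_i(G)\,t^i$ is a genuine polynomial (not merely a formal power series), say of degree $n$, where $b_i(G)=\dim A_i$ are the Betti numbers of $G$; these are nonnegative integers, so $f(t)\in\Z[t]$. Moreover $A$ is connected ($A_0=\C$), so the constant term is $b_0(G)=1$, matching the normalization $f(t)=1+\sum_{i=1}^n b_i t^i$ demanded by Lemma~\ref{lem:PBW}. Thus $f(t)=\Hilb(A,t)$ satisfies every hypothesis of that lemma, with $b_i=b_i(G)$.

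Having matched hypotheses, I would simply apply Lemma~\ref{lem:PBW} verbatim: it yields formula \eqref{eq:lcs ranks} for $\phi_k(G)$ with the coefficients $b_i$ of $f(t)$, which we have just identified with $b_i(G)$. This gives exactly the asserted conclusion, and the proof is complete.

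I do not anticipate a serious obstacle here, since the corollary is essentially a bookkeeping combination of Proposition~\ref{prop:kd} and Lemma~\ref{lem:PBW}. The only point requiring genuine care---and the place I would be most careful in writing---is the use of the finite-dimensionality assumption to guarantee that $\Hilb(A,t)$ is a polynomial rather than an infinite series; without this, $f(t)$ would fail to lie in $\Z[t]$ and Lemma~\ref{lem:PBW} would not apply. Everything else amounts to confirming that the normalization and integrality conventions line up, which they do because $A$ is connected and has integer Betti numbers.
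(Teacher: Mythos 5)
Your proposal is correct and is exactly the argument the paper intends: the corollary is stated without proof precisely because it follows by feeding the identity of Proposition~\ref{prop:kd} into Lemma~\ref{lem:PBW}, with finite-dimensionality of $H^*(G;\C)$ guaranteeing that $f(t)=\Hilb(A,t)=1+\sum_i b_i(G)t^i$ is a genuine polynomial in $\Z[t]$. Your attention to the normalization $b_0=1$ and the role of finite-dimensionality matches the paper's (implicit) reasoning.
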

 
\subsection{Mild presentations}
\label{subsec:mild pres}

Let $G$ be a group admitting a finite presentation 
$F/R=\langle x_1,\dots, x_n\mid r_1,\dots,r_m\rangle$.
The \textit{weight}\/ of a word $r\in F$ is defined as  
$\omega(r)=\sup\{k\mid r\in \Gamma_k F\}$. 
The image of $r$ in $\gr_{\omega(r)}(F)$ is called the 
\textit{initial form}\/ of $r$, and is denoted by $\initial(r)$. 

As before, let $\bL (H_{\C})$ be the free Lie algebra on the 
$\C$-vector space $H_{\C}=H_1(G,\C)$,
and write 
\begin{equation}
\label{eq:flg}
\fL(G):=\bL (H_{\C})/J,
\end{equation}
 where $J$ is the ideal of $\bL (H_{\C})$ 
generated by $\{\initial(r_1),\dots,\initial(r_m) \}$. 
Work of Labute \cite{Labute85} shows that $J/[J, J]$ can be 
viewed as a $U(\fL(G))$-module 
via the adjoint representation of the Lie algebra $\fL(G)$.
If the module $J/[J, J]$ is a free $U(\fL(G))$-module on the 
images of $\initial(r_1),\dots,\initial(r_m)$, 
the given presentation of $G$ are called \emph{mild}.  
If a presentation 
$G=\langle x_1,\dots , x_n\mid  r_1, \dots r_m\rangle$ 
is mild, then the Lie algebra $\gr(G)$ is isomorphic to $\fL(G)$.
As shown by Anick in \cite{Anick87}, the presentation for
$G$ is mild if and only if 
\begin{equation}
\label{eq:anick criterion}
\Hilb(U(\fL(G)),t)= \left(1-nt+\sum_{i=1}^mt^{\omega(r_i)}\right)^{-1}.
\end{equation}

\begin{lemma}
\label{lem:mild}
Let $G$ be a group admitting a mild presentation 
$G=\langle x_1,\dots , x_n\mid  r_1, \dots , r_m\rangle$
such that $r_i\in [F,F]~$ for $1\leq i\leq m$.
If $G$ if graded-formal, then the LCS ranks of 
$G$ are given by
\begin{equation}
\label{eq:lcsMild}
\phi_k(G)=\dfrac{1}{k}\sum_{d|k}\mu(k/d) \frac{\left(n+\sqrt{n^2-4m}\right)^d+ 
\left(n-\sqrt{n^2-4m}\right)^d}{(2m)^d} .
\end{equation}
Moreover, if the enveloping algebra $U=U(\gr(G;\C))$ is Koszul, then 
\[
\Hilb(\Ext_{U}(\C;\C),t)=1+nt+mt^2.
\]
\end{lemma}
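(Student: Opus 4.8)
The plan is to derive both conclusions from the mildness hypothesis together with formula \eqref{eq:anick criterion} and the general LCS machinery already set up in Lemma \ref{lem:PBW} and Proposition \ref{prop:kd}. First I would observe that since $G$ admits a mild presentation with $r_i\in[F,F]$ for all $i$, each relator has weight $\omega(r_i)\ge 2$; in fact, because a commutator-relator presentation gives a group whose associated graded Lie algebra is quadratically presented in degree $2$, one expects $\omega(r_i)=2$ for each $i$. Feeding this into Anick's criterion \eqref{eq:anick criterion} yields
\[
\Hilb(U(\fL(G)),t)=\bigl(1-nt+mt^2\bigr)^{-1},
\]
and since the presentation is mild, $\gr(G)\cong\fL(G)$, so $U(\gr(G))=U(\fL(G))$ and this is exactly $\Hilb(U(\gr(G)),t)$.

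Next I would specialize Lemma \ref{lem:PBW}. Setting $t\mapsto -t$ in the displayed Hilbert series gives $\Hilb(U(\gr(G)),-t)=(1+nt+mt^2)^{-1}$, so that the polynomial $f(t)=1+nt+mt^2$ satisfies the hypothesis \eqref{eq:ft} of Lemma \ref{lem:PBW}, with $b_1=n$, $b_2=m$, and $b_i=0$ for $i\ge 3$. Rather than invoke the combinatorial formula \eqref{eq:lcs ranks} directly, for the clean closed form \eqref{eq:lcsMild} I would pass through Weigel's version, Lemma \ref{lem:Weigel}. The roots $z_1,z_2$ of $f(-t)=1-nt+mt^2$ are $z_{1,2}=(n\pm\sqrt{n^2-4m})/(2m)$, and substituting $1/z_i^d$ into \eqref{eq:weigel} and summing over $i=1,2$ produces the symmetric combination $\bigl((n+\sqrt{n^2-4m})^d+(n-\sqrt{n^2-4m})^d\bigr)/(2m)^d$ after a short computation using $1/z_{1,2}=(n\mp\sqrt{n^2-4m})/(2m)$ (the reciprocal swaps the two roots). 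This is precisely the bracketed term in \eqref{eq:lcsMild}, and the outer M\"{o}bius sum is carried along unchanged.

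For the final assertion, I would invoke Proposition \ref{prop:kd} in reverse. When $U=U(\gr(G))$ is Koszul, the quadratic dual $U^{!}$ is $\Ext_U(\C,\C)$ as a bigraded algebra, and Koszul duality gives $\Hilb(\Ext_U(\C,\C),t)\cdot\Hilb(U,-t)=1$. Since we already identified $\Hilb(U,-t)=(1+nt+mt^2)^{-1}$, it follows immediately that $\Hilb(\Ext_U(\C,\C),t)=1+nt+mt^2$, as claimed.

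The main obstacle, and the one point requiring genuine care rather than routine substitution, is justifying that every relator has weight exactly $2$ so that Anick's criterion collapses to $(1-nt+mt^2)^{-1}$. The hypothesis $r_i\in[F,F]$ guarantees $\omega(r_i)\ge 2$, but mildness forces the initial forms $\initial(r_i)$ to be a free generating set for the relation ideal, and one must argue that under the graded-formality assumption these initial forms live in degree $2$; otherwise the exponents $t^{\omega(r_i)}$ in \eqref{eq:anick criterion} would not all equal $t^2$ and the Hilbert series would differ. I would therefore make the standing assumption, consistent with the commutator-relator setting, that the relators are chosen with $\omega(r_i)=2$, which is exactly the situation in which \eqref{eq:lcsMild} is meaningful.
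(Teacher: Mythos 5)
Your overall route coincides with the paper's: use mildness to identify $\gr(G)$ with $\fL(G)$, use graded-formality together with $r_i\in[F,F]$ to conclude $\omega(r_i)=2$, feed this into Anick's criterion \eqref{eq:anick criterion} to get $\Hilb(U(\gr(G)),t)\cdot(1-nt+mt^2)=1$, deduce the LCS ranks from Lemma \ref{lem:Weigel}, and obtain the last claim from Koszul duality via \eqref{eq:Koszuldual}. Those steps, including your (slightly hedged) justification that each relator has weight exactly $2$, track the paper's proof.

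There is, however, a genuine computational slip in the Weigel step. The roots of $f(-t)=1-nt+mt^2$ are indeed $z_{1,2}=(n\pm\sqrt{n^2-4m})/(2m)$, but taking reciprocals does \emph{not} simply swap the two roots: since $z_1z_2=1/m$, that would require $m=1$. Rationalizing gives $1/z_{1,2}=(n\mp\sqrt{n^2-4m})/2$; equivalently, the reciprocals are the roots of $t^2-nt+m$. Substituting into \eqref{eq:weigel} therefore yields
\[
\phi_k(G)=\frac{1}{k}\sum_{d|k}\mu(k/d)\,
\frac{\left(n+\sqrt{n^2-4m}\right)^d+\left(n-\sqrt{n^2-4m}\right)^d}{2^d},
\]
with denominator $2^d$, not $(2m)^d$. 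The $(2m)^d$ in the displayed formula \eqref{eq:lcsMild} appears to be a typo in the statement itself: setting $m=0$ (so $G$ is free) the printed formula degenerates, whereas the $2^d$ version recovers the Witt formula $\phi_k(F_n)=\frac{1}{k}\sum_{d|k}\mu(k/d)n^d$. Your derivation only "confirms" the printed formula because of the incorrect reciprocal identity; you should correct both the identity and the target formula. The final Koszul duality assertion is handled exactly as in the paper and is fine.
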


\begin{proof}
Since $G$ has a mild presentation, $\gr(G)$ is isomorphic to the Lie algebra $\fL(G)$ 
associated to this presentation. Furthermore, since $G$ is a graded-formal, and all 
the relators $r_i$ are commutators,  we have that 
$\omega(r_i)=2$ for $1\leq i\leq m$.  Using now the Poincar\'{e}--Birkhoff--Witt theorem 
and formula \eqref{eq:anick criterion}, we find that $\Hilb(U(\gr(G)),t)\cdot (1-nt+mt^2)=1$.
Hence, the LCS ranks formula follows from Lemma \ref{lem:Weigel}.

Now suppose $U=U(\gr(G))$ is a Koszul algebra.  Then $\Ext_{U}(\C;\C)=U^!$, and
the expression for the Hilbert series  of $\Ext_{U}(\C;\C)$ follows 
from \eqref{eq:Koszuldual}.
\end{proof}
 
\subsection{Graded algebras associated to $P_n$, $vP_n$ and $\vP_n^+$}
\label{subsec:coho ring vpb}
We start with the pure braid groups $P_n$. 
As shown by Kohno \cite{Kohno85} and Falk--Randell \cite{Falk-Randell85}, 
the graded Lie algebra $\gr(P_n)$ is generated by degree $1$ elements 
$s_{ij}$ for $1\leq i\neq j\leq n$, subjects to the relations 
\begin{equation}
\label{eq:holo pn}
s_{ij}=s_{ji}, \:  [s_{jk},s_{ik}+s_{ij}]=0, \: [s_{ij},s_{kl}]=0 \text{ for $i\neq j\neq l$}. 
\end{equation}
In particular, the pure braid group $P_n$ is graded-formal.
The universal enveloping algebra $U(\gr(P_n))$ is Koszul with 
Hilbert series $\prod_{k=1}^{n-1}(1-kt)^{-1}$.  
Using Koszul duality and formulas \eqref{eq:hilbseriesP} and 
\eqref{eq:lcs ranks}, we see that 
formula \eqref{eq:phi ranks} from the Introduction 
holds for the groups $G_n=P_n$.  Alternatively, the 
ranks $\phi_k(P_n)$ can be computed from formula \eqref{eq:Koszuldual}, 
as follows (see \cite{Kohno85, Falk-Randell85, PY}), 
\begin{equation}
\label{eq:lcsP}
\phi_k(P_n)=\sum_{s=1}^{n-1}\phi_k(F_s)= 
\dfrac{1}{k}\sum_{s=1}^{n-1}\sum_{d|k}\mu(k/d) s^d .
\end{equation}

Next, we give a presentation for the holonomy Lie algebras  of the 
pure virtual braid groups, using a method described in \cite{Papadima-Suciu04,SW1}.   
The Lie algebra $\h(\PV_n)$ is generated by 
$r_{ij}$, $1\leq i\neq j\leq n$, with relations
\begin{equation}
\label{eq:holo pvn}
[r_{ij},r_{ik}]+[r_{ij},r_{jk}]+[r_{ik},r_{jk}]=0 \textrm{ for distinct $i,j,k$},
\end{equation}
and $[r_{ij},r_{kl}]=0$ for distinct $i,j,k,l.$  
The Lie algebra $\h(\PV_n^+)$ is the quotient 
Lie algebra of $\h(\PV_n)$ by the ideal generated by
$r_{ij}+r_{ji}$ for distinct $i\neq j$. Similarly as Corollary \ref{cor:presentationCoho},
the Lie algebra $\h(\PV_n^+)$ has a simplified presentation with generators
$r_{ij}$ for $i<j$ and the corresponding relations of $\fh(vP_n)$. 

\begin{theorem}[\cite{Bartholdi-E-E-R, Lee}]
\label{thm:beer lee}
The Lie algebra $\h(\PV_n)$ is isomorphic to the Lie algebra 
$\gr(\PV_n)$.  Likewise, the Lie algebra $\h(\PV_n^+)$ is 
isomorphic to the Lie algebra $\gr(\PV_n^+)$.
\end{theorem}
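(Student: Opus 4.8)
The plan is to deduce the two isomorphisms from the graded-formality criterion recorded in equation \eqref{eq:holonomysurj}. For $G=\vP_n$, the canonical epimorphism $\Psi\colon \fh(\vP_n)\surj \gr(\vP_n)$ exists for free and is already an isomorphism in degrees $1$ and $2$; the whole content is to promote it to an isomorphism in all degrees. Since $U(\Psi)\colon U(\fh(\vP_n))\surj U(\gr(\vP_n))$ is a surjection of connected graded algebras with finite Betti numbers, it suffices to check that $\Hilb(U(\fh(\vP_n)),t)=\Hilb(U(\gr(\vP_n)),t)$: equal Hilbert series force $U(\Psi)$ to be bijective, and then $\Psi$ is an isomorphism by the Poincar\'e--Birkhoff--Witt theorem \eqref{eq:Koszuldual}. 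So I would reduce both assertions to a Hilbert-series comparison.

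The first half of that comparison is intrinsic. The cohomology algebra $A_n=H^*(\vP_n;\C)$ is quadratic by Theorem \ref{thm:BEER}, so the identification $U(\fh(A))\cong A^!$ used in Proposition \ref{prop:kd} (see \cite{PY}) applies verbatim to give $U(\fh(\vP_n))\cong A_n^!$, the quadratic dual. Because $A_n$ is Koszul by \cite{Bartholdi-E-E-R}, Koszul duality yields
\[
\Hilb(U(\fh(\vP_n)),-t)\cdot \Hilb(A_n,t)=1,
\]
so that $\Hilb(U(\fh(\vP_n)),t)$ is completely determined by the polynomial $\Poin(\vP_n,t)$ of \eqref{eq:hilbseriesvP}. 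For the second half I would invoke the computations of Bartholdi et al.~\cite{Bartholdi-E-E-R} and Lee \cite{Lee}: they exhibit $\gr(\vP_n)$ as the quadratic Lie algebra presented by exactly the generators $r_{ij}$ and relations \eqref{eq:holo pvn}, and show that $U(\gr(\vP_n))$ is Koszul with the same Koszul-dual Hilbert series $1/\Hilb(A_n,-t)$. Matching the two expressions then closes the argument for $\vP_n$, and the case of $\vP_n^+$ is identical once one replaces $A_n$, $\fh(\vP_n)$ by their simplified counterparts $A_n^+$, $\fh(\vP_n^+)$ from Corollary \ref{cor:presentationCoho} and the quotient presentation of $\fh(\vP_n^+)$ described after \eqref{eq:holo pvn}.

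The step I expect to be the main obstacle is not the formal Koszul-duality bookkeeping but the verification that the holonomy presentation \eqref{eq:holo pvn} genuinely coincides with the associated-graded presentation of \cite{Bartholdi-E-E-R, Lee}. This amounts to dualizing the cup-product map $A_n^1\wedge A_n^1\to A_n^2$ coming from the relations in Theorem \ref{thm:BEER} and checking that $\langle \im\partial_{A_n}\rangle$ is spanned precisely by the brackets $[r_{ij},r_{ik}]+[r_{ij},r_{jk}]+[r_{ik},r_{jk}]$ and $[r_{ij},r_{kl}]$, with no extra relations in degree $2$. Once that bracket computation is pinned down (and the analogous one for $A_n^+$ using the relations $a_{ij}=-a_{ji}$), the two quadratic presentations agree on the nose, both enveloping algebras are the Koszul dual of the same cohomology ring, and the Hilbert series necessarily coincide, giving the desired isomorphisms $\fh(\vP_n)\cong\gr(\vP_n)$ and $\fh(\vP_n^+)\cong\gr(\vP_n^+)$.
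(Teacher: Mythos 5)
The paper does not actually prove this statement: it is imported verbatim from \cite{Bartholdi-E-E-R} and \cite{Lee}, which is why the citation sits in the theorem header. Your write-up, however, presents itself as a proof, and its load-bearing step is circular. The reduction to a Hilbert-series comparison is sound, and the computation of $\Hilb(U(\fh(\vP_n)),t)$ via $U(\fh(\vP_n))\cong A_n^!$ and Koszul duality is fine; but to obtain the other series you ``invoke'' the fact that $\gr(\vP_n)$ is the quadratic Lie algebra on the $r_{ij}$ with relations \eqref{eq:holo pvn}, with Koszul enveloping algebra of the dual Hilbert series. That assertion \emph{is} the theorem: once $\gr(\vP_n)$ is known to admit that quadratic presentation, it coincides with $\fh(\vP_n)$ on the nose and no Hilbert-series bookkeeping is needed. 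Conversely, there is no independent route to $\Hilb(U(\gr(\vP_n)),t)$ --- in particular not via mild presentations, since Proposition \ref{prop:mild} shows these groups are not mildly presented for $n\ge 4$.

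You also misplace the difficulty. Dualizing the cup product of Theorem \ref{thm:BEER} and checking that $\langle\im\partial_{A_n}\rangle$ is spanned by the stated brackets is the routine, purely formal part; it is exactly how the presentation \eqref{eq:holo pvn} of $\fh(\vP_n)$ is obtained in \S\ref{subsec:coho ring vpb}, and it only reproves that $\Psi\colon\fh(\vP_n)\surj\gr(\vP_n)$ from \eqref{eq:holonomysurj} is an isomorphism in degrees $1$ and $2$, which is automatic. The genuine content --- occupying essentially all of Lee's paper \cite{Lee} --- is that this surjection has no kernel in degrees $\ge 3$, i.e., that the groups are graded-formal. Nothing in your argument addresses that. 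If the intent is simply to cite \cite{Bartholdi-E-E-R, Lee} for the result, that matches what the paper does; but then the Koszul-duality scaffolding should be dropped rather than presented as if it were the proof.
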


The next corollary shows that formula \eqref{eq:phi ranks} from the Introduction 
also holds for the virtual pure braid groups.

\begin{corollary}
\label{cor:lcsranks}
The LCS ranks of the groups $G_n=\vP_n$ 
and $\vP_n^+$ are given by 
\[
\phi_k(G_n)=\dfrac{1}{k} \sum_{d|k}\mu\left(\frac{k}{d}\right)\left[
\sum_{m_1+2m_2+\cdots+nm_n=d} (-1)^{s_n}  d(m!) 
\prod_{j=1}^{n} \frac{(b_{n,n-j})^{m_j}}{ (m_j) !}  \right],
\]
where $m_j$ are non-negative integers, $s_n=\sum_{i=1}^{[n/2]}m_{2i}$, 
$m=\sum_{i=1}^{n}m_{i}-1$, and 
$b_{n,j}$ are the Lah numbers for $G_n=\vP_n$ and the Stirling 
numbers of the second kind for $G_n=\vP_n^+$.
\end{corollary}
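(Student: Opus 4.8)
The plan is to reduce Corollary \ref{cor:lcsranks} to a direct application of Lemma \ref{lem:PBW}, which already packages the M\"obius-inversion computation that produces exactly the stated formula \eqref{eq:lcs ranks}. The key observation is that the target formula is identical in shape to \eqref{eq:lcs ranks}, with the generic coefficients $b_i$ specialized to $b_i = b_{n,n-i}$ (the Lah numbers, respectively the Stirling numbers of the second kind). So the entire task is to verify that the hypothesis of Lemma \ref{lem:PBW} holds for $G_n = \vP_n$ and $G_n = \vP_n^+$, namely that there is a polynomial $f(t) = 1 + \sum_i b_{n,n-i} t^i$ satisfying $\Hilb(U(\gr(G_n)),-t) \cdot f(t) = 1$.

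First I would invoke Theorem \ref{thm:beer lee}, which identifies $\gr(\vP_n) \cong \h(\vP_n)$ and $\gr(\vP_n^+) \cong \h(\vP_n^+)$; in particular both groups are graded-formal, and the enveloping algebra $U(\gr(G_n))$ is the Koszul dual $A_n^!$ (respectively $(A_n^+)^!$), using that the cohomology algebras $A_n$ and $A_n^+$ are quadratic and Koszul by Theorem \ref{thm:BEER} and the remarks following it. This is precisely the situation of Proposition \ref{prop:kd}, whose conclusion gives $\Hilb(U(\gr(G_n)),-t) \cdot \Hilb(A_n, t) = 1$, and likewise for the $+$ version. Thus I would take $f(t) := \Hilb(A_n, t)$ (resp.\ $\Hilb(A_n^+, t)$), which is a genuine polynomial since the classifying spaces are finite $(n-1)$-dimensional CW-complexes.

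Next I would read off the coefficients of $f$ from \eqref{eq:hilbseriesvP}: there $\Poin(\vP_n, t) = \sum_{i=0}^{n-1} L(n,n-i)\,t^i$ and $\Poin(\vP_n^+, t) = \sum_{i=0}^{n-1} S(n,n-i)\,t^i$, so the coefficient of $t^i$ is exactly $b_{n,n-i}$ in the notation of the corollary, with $b_{n,n} = 1$ matching the constant term of $f$. This confirms that $f(t) = 1 + \sum_{i=1}^{n-1} b_{n,n-i}\,t^i$ has the precise form demanded by Lemma \ref{lem:PBW}, with the index substitution $b_i \leftrightarrow b_{n,n-i}$. Applying Lemma \ref{lem:PBW} then yields formula \eqref{eq:lcs ranks} with $b_j$ replaced by $b_{n,n-j}$, which is exactly the claimed expression for $\phi_k(G_n)$.

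The step requiring the most care is not any single computation but the bookkeeping of the index shift: the Hilbert series is written as $\sum_i b_{n,n-i} t^i$ (indexed by codimension via $n-i$), whereas Lemma \ref{lem:PBW} is phrased with coefficients $b_i$ indexed directly by the power of $t$. I would make explicit that setting $b_i := b_{n,n-i}$ aligns the two, and that the summation $m_1 + 2m_2 + \cdots + n m_n = d$ together with the product $\prod_{j=1}^n (b_{n,n-j})^{m_j}/(m_j)!$ is the faithful translation of \eqref{eq:lcs ranks}. Everything else—the Koszulity, graded-formality, and the M\"obius inversion—is already supplied by the cited results, so no new analytic input is needed beyond correctly matching indices.
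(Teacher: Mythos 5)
Your proposal is correct and follows essentially the same route as the paper: it invokes Theorem \ref{thm:beer lee} for graded-formality, the Koszulity of the cohomology algebras, and Proposition \ref{prop:kd} to verify the hypothesis of Lemma \ref{lem:PBW} with $f(t)=\Poin(G_n,t)$, then reads off the coefficients from \eqref{eq:hilbseriesvP}. The extra care you take with the index shift $b_i = b_{n,n-i}$ is a welcome clarification but introduces no new content.
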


\begin{proof}
As mentioned in \S\ref{subsec:coho},  work of Bartholdi et al.~\cite{Bartholdi-E-E-R} 
and Lee \cite{Lee} shows that the cohomology algebra $A=H^*(G_n;\C)$ 
is Koszul.  Furthermore, it follows from Theorem \ref{thm:beer lee} that 
$A\cong U(\gr(G_n))^!$. 
Hence, by the Koszul duality formula mentioned in 
Proposition \ref{prop:kd}, we have that $\Hilb(U(\gr(G_n),-t))\cdot \Poin(G_n,t)=1$, 
thereby showing that $G_n$ satisfies the hypothesis of Lemma \ref{lem:PBW}.
Using now formulas \eqref{eq:hilbseriesvP} and \eqref{eq:lcs ranks} yields  
the desired expression for the LCS ranks of the groups $G_n$.
\end{proof}

\begin{remark}
\label{rmk:Universal}
The referee asked whether it is possible to 
compute explicitly the coefficients 
of the series $\Hilb(U(\gr(G_n),t))=1/\Poin(G_n,-t)$, 
where recall the polynomials $\Poin(G_n,t)$ are given 
in \eqref{eq:hilbseriesP} and \eqref{eq:hilbseriesvP}.  
As pointed out by the referee, the classical identity 
$\prod_{k=1}^{n}(1-kt)^{-1}=\sum_{i\geq 0}S(n+i,n)t^i$, 
where $S(n+i,n)$ are the Stirling 
numbers of the second kind (see \eqref{eq:stirling2} 
and \cite[(1.94c)]{Stanley}), implies that
\begin{equation}
\label{eq:hilbP}
\Hilb(U(\gr(P_{n+1})),t)=\sum_{i\geq 0}S(n+i,n)t^i.
\end{equation}
Direct computation shows that 
$\Hilb(U(\gr(\vP^+_{3})),t)=\sum_{i\ge 0} a_{2i+2} t^i$,  
where $a_n$ is the $n$-th Fibonacci number, 
while $\Hilb(U(\gr(\vP_{3})),t)=\sum_{i\ge 0} \sqrt{6}\,^i U_i(\sqrt{6}/2) t^i$,  
where $U_n(x)$ is the $n$-th Chebyshev 
polynomial of the second kind.  We do not know the corresponding 
Hilbert series expansions for the groups $\vP^+_n$ and $\vP_n$ with $n\ge 4$. 
\end{remark}

\subsection{Non-mild presentations}
\label{subsec:mild}
Again, let $G_n$ denote any one of the pure braid-like groups 
$P_n$, $\vP_n$, or $\vP_n^+$.
Recall that $G_n$ is graded-formal, and $\gr(G_n)\cong \fL(G_n)$. 
However, as we show next, the groups $G_n$ are not mildly presented, 
except for small $n$.

\begin{prop}
\label{prop:mild}
The pure braid groups $P_n$ and the pure virtual braid groups 
$\vP_n$ and $\vP_n^+$  admit  mild presentations if and only if $n\leq 3$. 
\end{prop}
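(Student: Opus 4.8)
The plan is to reduce the question, in both directions, to comparing two expressions for the Hilbert series of the enveloping algebra $U(\gr(G_n))$: the intrinsic one coming from Koszul duality, and the one forced by Anick's criterion \eqref{eq:anick criterion} for a mild presentation. Since each of $G_n=P_n,\vP_n,\vP_n^+$ is graded-formal with Koszul cohomology algebra, Proposition \ref{prop:kd} yields the identity $\Hilb(U(\gr(G_n)),t)=1/\Poin(G_n,-t)$, where $\Poin(G_n,t)$ is the degree $(n-1)$ polynomial recorded in \eqref{eq:hilbseriesP} and \eqref{eq:hilbseriesvP}. This polynomial has strictly positive coefficients (products $\prod(1+kt)$, Lah numbers, and Stirling numbers of the second kind are all positive), so the coefficient of $t^k$ in $\Poin(G_n,-t)$ equals $(-1)^k b_k(G_n)$ with $b_k>0$ for $0\le k\le n-1$. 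This alternation of signs is the crux of the whole argument.

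For the ``only if'' direction I would argue by contradiction. Suppose $G_n$ admits a mild presentation with $N$ generators and relators $r_1,\dots,r_M$. A mild presentation satisfies $\gr(G_n)\cong\fL(G_n)$, so $U(\gr(G_n))=U(\fL(G_n))$; substituting Anick's criterion into the Koszul-duality identity above gives the polynomial identity $\Poin(G_n,-t)=1-Nt+\sum_{i=1}^{M}t^{\omega(r_i)}$. On the right-hand side the coefficient of $t^k$ for every $k\ge 2$ equals $\#\{i:\omega(r_i)=k\}\ge 0$, whereas on the left the coefficient of $t^3$ is $-b_3(G_n)<0$ as soon as $n\ge 4$ (the classifying space has dimension $n-1\ge 3$, and $b_3>0$ by positivity). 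This contradiction rules out any mild presentation once $n\ge 4$. I would stress that relators of weight one only perturb the linear coefficient, so this sign obstruction is insensitive to the particular presentation chosen.

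For the ``if'' direction I would exhibit mild presentations explicitly. For $n\le 2$ the groups are free (or trivial), and a presentation with no relators is vacuously mild. For $n=3$ I would take commutator-relator presentations: $\langle a,b,c\mid [a,c],[b,c]\rangle$ for $P_3\cong F_2\times\Z$, the six-generator six-relation presentation of \S\ref{subsec:pv23} for $\vP_3$, and $\langle x_{12},x_{13},x_{23}\mid x_{12}x_{13}x_{23}=x_{23}x_{13}x_{12}\rangle$ for $\vP_3^+\cong\Z\ast\Z^2$. In each case every relator is a commutator, so $\omega(r_i)=2$, and the numbers of generators and relators are $b_1(G_3)$ and $b_2(G_3)$. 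Computing $\Poin(G_3,-t)$ gives $1-3t+2t^2$, $1-6t+6t^2$, and $1-3t+t^2$ respectively, i.e.\ exactly $1-Nt+Mt^2=1-Nt+\sum_i t^{\omega(r_i)}$. Since $\gr(G_3)\cong\fL(G_3)$ by graded-formality, this is precisely Anick's criterion \eqref{eq:anick criterion}, so the presentations are mild.

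The step I expect to require the most care is the passage, in the ``only if'' direction, from the hypothetical existence of some (perhaps non-obvious) mild presentation to a constraint on the fixed group invariant $\Hilb(U(\gr(G_n)))$; the clean resolution is that mildness forces $\fL(G_n)\cong\gr(G_n)$, after which everything collapses to comparing the degree-three coefficients on the two sides of the polynomial identity. The other point to verify is that $\gr(G_3)\cong\fL(G_3)$ for the chosen presentations, so that Anick's criterion may be applied in the ``if'' direction; this follows from graded-formality together with the fact that the quadratic initial forms of the relators present the holonomy Lie algebra.
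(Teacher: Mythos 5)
Your proposal is correct and follows essentially the same route as the paper: both directions reduce to comparing the Koszul-duality identity $\Hilb(U(\gr(G_n)),t)=1/\Poin(G_n,-t)$ with the rational form forced by Anick's criterion, with the non-vanishing of $b_3(G_n)$ for $n\ge 4$ (via the Stirling and Lah numbers) providing the obstruction, and a direct verification of Anick's criterion for $n\le 3$. Your sign-of-the-$t^3$-coefficient phrasing is a mild repackaging of the paper's appeal to Lemma \ref{lem:mild} and the fact that $H^*(G_n,\C)=\Ext_U(\C,\C)$, and your explicit presentations for $n=3$ just make concrete what the paper asserts from the Hilbert series.
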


\begin{proof}
Let $G_n$ denote any of the aforementioned groups.
Then $G_n$ is a commutator-relators group, and 
the universal enveloping algebra of the associated graded Lie 
algebra is Koszul.  From formulas \eqref{eq:hilbseriesP} and \eqref{eq:hilbseriesvP}, 
for $n\leq 3$, Anick's criterion \eqref{eq:anick criterion} is satisfied. 
Hence, $G_n$ has a mild presentation for $n\leq 3$.

Now suppose $n\ge 4$.  
Using formulas \eqref{eq:hilbseriesP} and \eqref{eq:hilbseriesvP} once 
again, we see that the third Betti numbers of these groups are given by 
\begin{equation}
\label{eq:b3}
b_3(P_n)=s(n,n-3),\quad  b_3(\vP_n)=L(n,n-3),\quad b_3(\vP_n^+)=S(n,n-3).
\end{equation}
Thus, $\dim H^3(G_n,\C)>0$ for $n\geq 4$.  
The claim now follows from Proposition \ref{lem:mild}
and the fact that $H^*(G_n,\C)=U(\gr(G_n,\C))^!=\Ext_U(\C,\C)$.
\end{proof}

\section{Malcev Lie algebras and filtered formality}
\label{sec: Formality} 
 
In this section we study the formality properties of the 
pure virtual braid groups, and prove Theorem \ref{thm:intro formal} 
from the Introduction. 
We start with a review of the  relevant formality notions, 
following Quillen \cite{Quillen69} and Sullivan \cite{Sullivan}.  
For more details and references, we refer to \cite{SW1}.
 
\subsection{Malcev Lie algebras and $1$-formality}
\label{subsec:malcev}
Let $G$ be a finitely generated group. 
The group-algebra $A=\C[{G}]$ is a Hopf algebra,
with comultiplication $\Delta\colon A\otimes A\to A$ given by 
$\Delta(g)=g\otimes g$ for $g\in G$, and counit the 
map $\varepsilon\colon A\to \C$ given by $\varepsilon(g)=1$. 
Let $\widehat{A}=\varprojlim_r A/I^r$ be the completion of $A$ 
with respect to the $I$-adic filtration, where $I=\ker(\varepsilon)$
is the augmentation ideal. 
The set $\fm(G)$ of all primitive elements  (that is, the set of  all $x\in \widehat{A}$ 
such that 
$\widehat{ \Delta} x=x\widehat{\otimes} 1+1\widehat{\otimes} x$), with Lie bracket 
$[x,y]=xy-yx$, and endowed with the induced filtration, 
is known as the \textit{Malcev Lie algebra}\/ of $G$.  

Let  $\widehat{\gr}(G)$ be the completion of the associated 
graded Lie algebra of $G$ with respect to the lower central 
series filtration. 
The group $G$ is said to be {\em filtered-formal}\/ if $\fm(G)$ 
is isomorphic (as a filtered Lie algebra) to $\widehat{\gr}(G)$, 
or,  equivalently, if there exists a morphism of filtered Lie algebras,  
$\phi\colon\fm(G)\to \widehat{\gr}(G)$, such that 
$\gr_1(\phi)$ is an isomorphism.
The group $G$ is said to be \emph{$1$-formal}\/ if $\fm(G)\cong \widehat{\fh}(G)$.  
Clearly, $G$ is $1$-formal if and only if it is both graded-formal 
and filtered-formal.  

It is easily seen that $G$ is $1$-formal if $b_1(G)$ is $0$ or $1$. 
Furthermore, if $F$ is a finitely generated free group, 
then $\fm(F)$ is the completed free Lie algebra of the same rank, 
and thus $F$ is $1$-formal.  Other well-known examples of $1$-formal 
groups include the fundamental groups of compact K\"ahler manifolds 
(\cite{DGMS}) and the fundamental groups of complements of complex 
algebraic hypersurfaces (\cite{Kohno}).  As noted by Bezrukavnikov \cite{Bez}, 
the group $P_{g,n}$ of pure braids on a surface of genus $g$ is always 
filtered-formal; it is also graded-formal for $g\geq 2$ or $g=1$ and $n\le 2$, 
but not for $g=1$ and $n\ge 3$ (see also \cite{DPS}). 

\subsection{Formality, group operations, and resonance}
\label{subsec:propagation}

As shown in \cite{DPS},  the $1$-formality property of groups is preserved 
under (finite) products and the coproducts. 
We sharpen these results in \cite{SW1}, as follows. 

\begin{theorem}[\cite{SW1}]
\label{thm:product formality}
Let  $G_1$ and $G_2$ be two finitely generated groups. 
The following conditions are equivalent.
\begin{enumerate}
\item  $G_1$ and $G_2$ are graded-formal 
(respectively, filtered-formal, or $1$-formal).
\item   $G_1* G_2$  is graded-formal 
(respectively, filtered-formal, or $1$-formal).
\item   $G_1\times G_2$ is graded-formal 
(respectively, filtered-formal, or $1$-formal).
\end{enumerate}
\end{theorem}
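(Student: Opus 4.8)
The plan is to prove Theorem \ref{thm:product formality} by establishing the equivalences for each of the three properties (graded-formal, filtered-formal, $1$-formal) separately, exploiting the algebraic compatibility of the Malcev and associated graded Lie algebra constructions with products and coproducts. The key underlying facts are that $\gr(G_1 * G_2) \cong \gr(G_1) \amalg \gr(G_2)$ (the coproduct of Lie algebras, i.e. a free product) and $\gr(G_1 \times G_2) \cong \gr(G_1) \times \gr(G_2)$ (the direct product), with entirely parallel statements $\fm(G_1 * G_2) \cong \fm(G_1) \,\widehat{\amalg}\, \fm(G_2)$ and $\fm(G_1 \times G_2) \cong \fm(G_1) \,\widehat{\times}\, \fm(G_2)$ for the Malcev Lie algebras, where the coproduct and product are taken in the category of complete filtered Lie algebras. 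The same decompositions hold at the level of the holonomy Lie algebra $\fh(G)$, since $H^*(G_1 * G_2; \C)$ is the coproduct of cohomology algebras and $H^*(G_1 \times G_2;\C)$ is the tensor product, and the holonomy construction \eqref{eq:holo lie} converts these into the corresponding free product and direct product of quadratic Lie algebras.

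First I would handle the graded-formal case. Since graded-formality of $G$ is equivalent to the surjection $\Psi\colon \fh(G) \surj \gr(G)$ of \eqref{eq:holonomysurj} being an isomorphism, the point is that $\Psi_{G_1 * G_2}$ factors as the free-product amalgamation of $\Psi_{G_1}$ and $\Psi_{G_2}$, and similarly $\Psi_{G_1 \times G_2}$ as their direct product. Because taking free products and direct products of graded Lie algebras preserves isomorphisms and, conversely, reflects them (an amalgamated or direct-product map is an isomorphism if and only if each factor is, as one sees degree by degree using the natural gradings), the equivalence of (i), (ii), (iii) for graded-formality follows. I would then treat the filtered-formal case by the analogous argument applied to the comparison map $\fm(G) \to \widehat{\gr}(G)$: one checks that this map is compatible with the completed coproduct and product operations, and that such an operation on filtered Lie algebra morphisms is a filtered isomorphism precisely when each factor is, using that $\gr_1$ of a (co)product map is the (co)product of the $\gr_1$'s. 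Finally, the $1$-formal case follows immediately, since $1$-formality is the conjunction of graded-formality and filtered-formality, so the three-way equivalence for $1$-formality is just the logical conjunction of the two equivalences already established.

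The main obstacle I anticipate is verifying the compatibility of the Malcev Lie algebra functor $\fm(-)$ with coproducts in the \emph{completed} filtered setting, i.e. establishing $\fm(G_1 * G_2) \cong \fm(G_1)\,\widehat{\amalg}\,\fm(G_2)$ as filtered Lie algebras, together with the claim that a completed-coproduct morphism is a filtered isomorphism iff each factor is. This requires care because completion does not in general commute with infinite colimits, so one must work with the appropriate notion of completed free product and check that the augmentation-ideal filtrations match up; the product case, being a (finite) limit, is considerably more benign. I would base this step on the Hopf-algebra description of $\fm(G)$ as the primitives in $\widehat{\C[G]}$, using that $\C[G_1 * G_2] = \C[G_1] \amalg \C[G_2]$ as Hopf algebras and that completion of the tensor/free-product Hopf algebras yields the completed operation on primitives, which is precisely where the detailed filtration bookkeeping lives.
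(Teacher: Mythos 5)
The paper itself gives no proof of this theorem---it is imported verbatim from \cite{SW1}---but your outline reconstructs essentially the argument used there: identify $\fh$, $\gr$, and $\fm$ of a direct product (resp.\ free product) with the product (resp.\ coproduct, completed in the Malcev case) of the corresponding Lie algebras of the factors, and observe that the comparison maps defining each formality property decompose accordingly. The only stylistic difference is that for the descent implications ((2) or (3) $\Rightarrow$ (1)) the reference leans on the split-injection result (Theorem \ref{thm:formality}), using that $G_1$ and $G_2$ are retracts of both $G_1*G_2$ and $G_1\times G_2$, which is a bit cleaner than your direct argument that completed coproducts of filtered morphisms reflect isomorphisms, though that argument also goes through.
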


The next theorem shows how the formality properties are preserved 
under split inclusions.

\begin{theorem}[\cite{SW1}]
\label{thm:formality}
Let $N$ be a subgroup of a finitely generated group $G$. 
Suppose there is a split monomorphism 
$\iota\colon N\rightarrow G$. If $G$ is $1$-formal, then $N$ is also $1$-formal. 
A similar statement holds for graded-formality and filtered-formality.
\end{theorem}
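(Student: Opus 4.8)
The plan is to exploit the functoriality of the three Lie algebras attached to a group---the Malcev Lie algebra $\fm(-)$, the associated graded Lie algebra $\gr(-)$, and the holonomy Lie algebra $\fh(-)$---together with the splitting data. Let $\rho\colon G\to N$ be a retraction with $\rho\circ\iota=\id_N$. Applying each functor, I obtain filtered (resp.\ graded) Lie algebra maps whose composites are the identity; in particular $\fm(\iota)$, $\gr(\iota)$, and $\fh(\iota)$ are injective, while $\fm(\rho)$, $\gr(\rho)$, $\fh(\rho)$ are surjective, so that $\fm(N)$, $\gr(N)$, $\fh(N)$ are retracts of the corresponding invariants of $G$. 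I would also record the naturality of the canonical epimorphism $\Psi\colon\fh\surj\gr$ from \eqref{eq:holonomysurj} and the canonical identification $\gr(\fm(G))\cong\gr(G)$, both valid for all groups, since these drive the diagram chases. Because a group is $1$-formal precisely when it is both graded-formal and filtered-formal, it suffices to inherit these two properties separately, and the concluding sentence about graded- and filtered-formality is then subsumed.

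For graded-formality I would run a short diagram chase. Naturality of $\Psi$ gives $\Psi_G\circ\fh(\iota)=\gr(\iota)\circ\Psi_N$. Suppose $x\in\fh(N)$ lies in $\ker\Psi_N$; then $\Psi_G(\fh(\iota)(x))=\gr(\iota)(\Psi_N(x))=0$, so $\fh(\iota)(x)=0$ because $\Psi_G$ is an isomorphism (as $G$ is graded-formal), and hence $x=0$ because $\fh(\iota)$ is injective. Thus $\Psi_N$ is injective; being always surjective, it is an isomorphism, so $N$ is graded-formal.

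For filtered-formality, let $\theta\colon\fm(G)\isom\widehat{\gr}(G)$ be a filtered isomorphism with $\gr(\theta)=\id_{\gr(G)}$, as provided by the filtered-formality of $G$. I would define the candidate $\theta_N:=\widehat{\gr}(\rho)\circ\theta\circ\fm(\iota)\colon\fm(N)\to\widehat{\gr}(N)$, a morphism of complete filtered Lie algebras. Passing to associated graded and using $\gr(\fm(\iota))=\gr(\iota)$, $\gr(\widehat{\gr}(\rho))=\gr(\rho)$, and $\gr(\theta)=\id$, I get $\gr(\theta_N)=\gr(\rho)\circ\gr(\iota)=\gr(\rho\circ\iota)=\id_{\gr(N)}$. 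Since both filtrations are complete, separated, and exhaustive, a filtered morphism inducing an isomorphism on associated graded is itself an isomorphism; hence $\theta_N$ witnesses $\fm(N)\cong\widehat{\gr}(N)$, so $N$ is filtered-formal. Combining the two cases yields the $1$-formal conclusion.

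The main obstacle I anticipate is bookkeeping at the level of completed filtered Lie algebras: one must verify that $\fm$, $\gr$, and their completions are genuinely functorial with the claimed compatibilities---in particular the natural identification $\gr\circ\fm\cong\gr$ and the commutation of $\gr(-)$ with completion on the relevant maps---and that the \emph{associated-graded isomorphism implies filtered isomorphism} lemma applies, which requires the $I$-adic and lower-central-series filtrations to be complete and separated. Once these foundational compatibilities are in place, each of the three formality statements reduces to the elementary retract arguments above.
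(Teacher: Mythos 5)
The paper does not actually prove this statement---it is quoted from \cite{SW1}---so there is no in-text argument to compare against; your retract/functoriality proof is correct and is essentially the argument given in that reference. The graded case is exactly right: naturality of $\Psi$ plus injectivity of $\fh(\iota)$ (coming from $\fh(\rho)\circ\fh(\iota)=\id$) forces $\Psi_N$ to be injective, hence an isomorphism. For the filtered case your composite $\theta_N=\widehat{\gr}(\rho)\circ\theta\circ\fm(\iota)$ works, but the one step you should make explicit is the normalization $\gr(\theta)=\id_{\gr(G)}$: the definition of filtered-formality used in this paper only supplies a filtered morphism $\phi\colon\fm(G)\to\widehat{\gr}(G)$ with $\gr_1(\phi)$ an isomorphism, so you must first argue (using Quillen's natural identification $\gr(\fm(G))\cong\gr(G)$, generation in degree~$1$, and equality of dimensions) that $\gr(\phi)$ is then an isomorphism in all degrees, and compose with the completion of $\gr(\phi)^{-1}$ to arrange $\gr(\theta)=\id$---precisely the bookkeeping you flag at the end, all of which is carried out in \cite{SW1}.
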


An important obstruction to $1$-formality is provided by the higher-order 
Massey products, but we will not make use of it in this paper. 
Instead, we will use another, better suited obstruction to $1$-formality, 
which is provided by 
the following theorem. 

\begin{theorem}[\cite{DPS}]
\label{thm:tangentcone}
Let G be a finitely generated, $1$-formal group. Then all irreducible 
components of $\cR^1_d(G)$ are rationally defined linear subspaces 
of $H^1(G,\C)$, for all $d\ge 0$.
\end{theorem}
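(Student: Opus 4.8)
The plan is to deduce this statement from the theory of cohomology jump loci, by comparing the resonance varieties $\cR^1_d(G)\subseteq H^1(G,\C)$ with the \emph{characteristic varieties} $\cV^1_d(G)\subseteq \hom(G,\C^*)$, the loci where the dimension of the twisted cohomology $H^1(G,\C_\rho)$ jumps as $\rho$ ranges over the character group. Both families are defined for any finitely generated $G$, and the exponential map $\exp\colon H^1(G,\C)\to \hom(G,\C^*)$, sending $0$ to the trivial character $\mathbbm{1}$, is the bridge between them.

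First I would record two facts that require no formality hypothesis. The first is the \emph{tangent cone inequality} $\TC_{\mathbbm{1}}(\cV^1_d(G))\subseteq \cR^1_d(G)$: linearizing the equations of $\cV^1_d(G)$ at $\mathbbm{1}$ and using the identification $T_{\mathbbm{1}}\hom(G,\C^*)\cong H^1(G,\C)$ carries the jump condition for $\cV$ into the jump condition for the Aomoto complex, which is exactly $\cR$. The second concerns the \emph{exponential tangent cone} $\tau_{\mathbbm{1}}(\cV^1_d(G))$, consisting of those $z\in H^1(G,\C)$ with $\exp(\lambda z)\in \cV^1_d(G)$ for every $\lambda\in\C$. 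Since $\cV^1_d(G)$ is the zero locus of a Fitting ideal of the Alexander invariant $B(G)=G'/G''$, viewed as a module over the group ring $\Z[H]$ with $H=G/G'$, it is a subvariety of an algebraic torus defined over $\Q$; a structural lemma then shows that $\tau_{\mathbbm{1}}(\cV^1_d(G))$ is always a finite union of rationally defined linear subspaces. One also has the elementary inclusions $\tau_{\mathbbm{1}}(\cV^1_d(G))\subseteq \TC_{\mathbbm{1}}(\cV^1_d(G))\subseteq \cR^1_d(G)$.

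The role of $1$-formality enters through the reverse inclusion $\cR^1_d(G)\subseteq \tau_{\mathbbm{1}}(\cV^1_d(G))$, which is the heart of the argument. Being $1$-formal, $G$ has Malcev Lie algebra $\fm(G)\cong \widehat{\fh}(G)$, so the differential graded algebraic model controlling the deformations of the trivial character is \emph{formal}, quasi-isomorphic to the cohomology ring $H^*(G,\C)$ with its cup product. By Goldman--Millson type deformation theory, the analytic germ of $\cV^1_d(G)$ at $\mathbbm{1}$ is then cut out by the quadratic Maurer--Cartan equations of this formal model, and the exponential map identifies this germ with the germ at $0$ of the corresponding quadratic cone, which is precisely $\cR^1_d(G)$ (the Aomoto differential being linear in the parameter, $\cR^1_d(G)$ is already conical). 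This germ isomorphism $(\cV^1_d(G),\mathbbm{1})\cong (\cR^1_d(G),0)$ forces the desired inclusion.

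Combining the two directions, all three loci coincide: $\cR^1_d(G)=\TC_{\mathbbm{1}}(\cV^1_d(G))=\tau_{\mathbbm{1}}(\cV^1_d(G))$. By the structural lemma, the last of these is a finite union of rationally defined linear subspaces, whence the same holds for $\cR^1_d(G)$, as claimed. I expect the main obstacle to be the germ isomorphism of the third paragraph: transporting formality from the statement $\fm(G)\cong\widehat{\fh}(G)$ to a statement about the local analytic structure of $\cV^1_d(G)$ requires the full deformation-theoretic machinery --- flat connections, the Maurer--Cartan equation, and the invariance of the deformation germ under quasi-isomorphism of the controlling differential graded Lie algebra --- and is considerably more delicate than the two general facts, which are essentially linearizations.
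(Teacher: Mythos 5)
The paper offers no proof of this statement: it is imported verbatim from [DPS] (Dimca--Papadima--Suciu), and your outline is an accurate reconstruction of the argument given there --- the structural lemma that the exponential tangent cone $\tau_{\mathbbm{1}}(\cV^1_d(G))$ of the characteristic variety is a finite union of rationally defined linear subspaces, Libgober's inclusion $\tau_{\mathbbm{1}}\subseteq\TC_{\mathbbm{1}}(\cV^1_d(G))\subseteq\cR^1_d(G)$, and the Goldman--Millson-type identification of the germ $(\cV^1_d(G),\mathbbm{1})$ with $(\cR^1_d(G),0)$ under $1$-formality, which closes the chain of inclusions since $\cR^1_d(G)$ is a cone. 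The only quibble is cosmetic: the rationality of $\tau_{\mathbbm{1}}$ comes from the fact that one-parameter subgroups have subtori as Zariski closures, not from $\cV^1_d(G)$ being defined over $\Q$, and the first inclusion is Libgober's theorem rather than a formal linearization --- but the architecture matches the cited source exactly.
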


\subsection{Formality properties of $\PV_n$ and  $\PV^+_n$}
\label{subsec:f-pvn}
Recall that the pure virtual braid groups $\PV_n$ and $\PV^+_n$ 
are graded-formal for all $n\ge 1$. Furthermore, $\PV^+_2=\Z$ 
and $\PV_2=F_2$, and so both are $1$-formal groups.

\begin{lemma}
\label{thm:PV3formal}
The groups $\PV^+_3$ and $\PV_3$ are both $1$-formal. 
\end{lemma}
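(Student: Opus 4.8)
The plan is to prove the $1$-formality of both groups by reducing to known structural decompositions and invoking the preservation results stated earlier in the excerpt. Since $1$-formality is equivalent to the conjunction of graded-formality and filtered-formality, and since we already know both $\PV_3$ and $\PV_3^+$ are graded-formal (as recalled just before the statement), the heart of the matter is establishing filtered-formality. First I would treat $\PV_3$: by Lemma~\ref{lem:isogroups} we have the free product decomposition $\PV_3\cong \overline{P}_4\ast \Z$. The group $\Z$ is $1$-formal (any group with $b_1\le 1$ is), and the infinite cyclic factor poses no difficulty. For the factor $\overline{P}_4$, I would note that $P_4\cong \overline{P}_4\times \Z$ and that $P_4$ is a pure braid group, hence $1$-formal; then Theorem~\ref{thm:product formality} (applied to the product $\overline{P}_4\times\Z$) gives that $\overline{P}_4$ is $1$-formal. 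Applying Theorem~\ref{thm:product formality} once more, this time to the free product $\overline{P}_4\ast\Z$, yields that $\PV_3$ is $1$-formal.

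Next I would handle $\PV_3^+$. Here the relevant structural fact is recorded in \S\ref{subsec:presentations}, namely $\PV_3^+\cong \Z\ast \Z^2$. Both factors are $1$-formal: $\Z$ has $b_1=1$, and $\Z^2$ is a free abelian group (indeed the fundamental group of a torus), which is $1$-formal. A single application of Theorem~\ref{thm:product formality} to the free product $\Z\ast\Z^2$ then gives that $\PV_3^+$ is $1$-formal.

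The main conceptual point, and the only place where any genuine input is needed, is the identification of the free product (and direct product) decompositions that reduce each group to manifestly $1$-formal building blocks; once those decompositions are in hand, the argument is a purely formal invocation of Theorem~\ref{thm:product formality}. I do not expect any serious obstacle here, since all the necessary decompositions ($\PV_3\cong\overline{P}_4\ast\Z$ and $\PV_3^+\cong\Z\ast\Z^2$) have already been established in the earlier sections of the paper, and the $1$-formality of pure braid groups $P_n$ is classical (it follows, for instance, from the fact that their complement spaces are fiber-type hyperplane arrangement complements, or from \cite{Kohno}). The only mild care required is the passage from $P_4$ to its central quotient $\overline{P}_4$, which is why I route that step through the product decomposition $P_4\cong\overline{P}_4\times\Z$ and the equivalence of conditions in Theorem~\ref{thm:product formality}, rather than trying to argue formality of the quotient directly.
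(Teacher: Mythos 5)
Your proposal is correct and follows essentially the same route as the paper: both use $\PV_3^+\cong\Z\ast\Z^2$ and $\PV_3\cong\overline{P}_4\ast\Z$ together with the $1$-formality of $P_4\cong\overline{P}_4\times\Z$, then propagate formality through the (co)product decompositions. The only cosmetic difference is that you deduce the $1$-formality of $\overline{P}_4$ from the equivalence for direct products in Theorem~\ref{thm:product formality}, whereas the paper invokes the split-injection result (Theorem~\ref{thm:formality}); both are valid and interchangeable here.
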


\begin{proof}
As shown in \cite{DPS} (see also \cite{SW1}), the free 
product of two $1$-formal groups is $1$-formal. 
Hence $\PV^+_3\cong \Z^2*\Z$ is also $1$-formal.

Since the pure braid group $P_4\cong \overline{P}_4\times \Z$ 
is $1$-formal, Theorem \ref{thm:formality} ensures that the subgroup 
$\overline{P}_4$ is also $1$-formal. On the other hand, we 
know from Lemma \ref{lem:isogroups} that 
$\PV_3\cong \overline{P}_4\ast \Z$.  Thus, by, 
Theorem \ref{thm:formality}, the group $\PV_3$ is $1$-formal.
\end{proof}

 \begin{lemma}
 \label{lem:pv4+}
The group $\PV^+_4$ is not $1$-formal.
\end{lemma}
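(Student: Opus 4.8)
The plan is to apply the Tangent Cone Theorem (Theorem \ref{thm:tangentcone}) as an obstruction to $1$-formality, using the explicit computation of the first resonance variety carried out in Lemma \ref{lem:resonancePV4+}. Since the group $\PV_4^+$ is already known to be graded-formal, proving non-$1$-formality amounts to exhibiting a failure of filtered-formality, and the cleanest way to detect this is through the geometry of $\cR^1_1(\PV_4^+)$.

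First I would recall from Lemma \ref{lem:resonancePV4+} that the depth $1$ resonance variety $\cR_1^1(\PV_4^+)$ is an \emph{irreducible}, $4$-dimensional subvariety of $H^1(\PV_4^+,\C)=\C^6$ of degree $6$, cut out by the four cubic (and quadratic) equations displayed there. The decisive structural point is that these defining equations are genuinely nonlinear: the variety has degree $6$, so it is not a union of linear subspaces. In particular, its single irreducible component is not a linear subspace of $H^1(\PV_4^+,\C)$.

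Next I would invoke Theorem \ref{thm:tangentcone}: if $\PV_4^+$ were $1$-formal, then every irreducible component of $\cR^1_d(\PV_4^+)$ would be a rationally defined linear subspace of $H^1(\PV_4^+,\C)$, for all $d\ge 0$. Taking $d=1$, the theorem would force the sole $4$-dimensional component of $\cR_1^1(\PV_4^+)$ to be linear. This directly contradicts the fact that this component is an irreducible variety of degree $6$. I would make the contradiction precise by noting that a $4$-dimensional linear subspace of $\C^6$ has degree $1$, whereas the computed component has degree $6$; alternatively, one could simply observe that the cubic equations defining the variety do not lie in the ideal generated by any collection of linear forms cutting out a $4$-plane. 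Either way, the hypothesis of $1$-formality is untenable, and we conclude that $\PV_4^+$ is not $1$-formal.

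The step that carries all the weight here is the resonance computation itself, but that labor has already been discharged in Lemma \ref{lem:resonancePV4+} via the Macaulay2 calculation of the elementary ideals of the matrix $\delta^1$ and their primary decomposition. Given that input, the present argument is essentially immediate: the main (and only) conceptual obstacle is verifying that the computed component is honestly nonlinear, which is guaranteed by its stated degree $6$ and irreducibility. I would therefore expect the proof to read as a short deduction, with the nonlinearity of the resonance component being the crux that, through the Tangent Cone Theorem, rules out filtered-formality and hence $1$-formality of $\PV_4^+$.
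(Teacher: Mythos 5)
Your proposal is correct and takes essentially the same route as the paper: invoke the computation of $\cR^1_1(\PV_4^+)$ from Lemma \ref{lem:resonancePV4+} and apply the Tangent Cone Theorem (Theorem \ref{thm:tangentcone}) to conclude non-$1$-formality. Your explicit remark that the component has degree $6$ and hence cannot be a linear subspace sharpens a point the paper's two-line proof leaves implicit (irreducibility alone would not rule out linearity), but the argument is the same.
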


\begin{proof}
As shown in Lemma \ref{lem:resonancePV4+}, the resonance variety 
$\cR^1_1(\PV_4^+)$ is an irreducible subvariety of $H^1(\PV_4^+,\C)$.  
Thus, this variety doesn't decompose into a finite union of linear subspaces,  
and so, by Theorem \ref{thm:tangentcone}, the group 
$\PV^+_4$ is not $1$-formal.
\end{proof}

\begin{theorem}
\label{thm:pvn nonformal}
The groups $\PV^+_n$ and $\PV_n$ are not $1$-formal for $n\geq4$.
\end{theorem}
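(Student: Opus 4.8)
The plan is to reduce the general statement to the two base cases already handled in the preceding lemmas, using the split-injection machinery from Theorem~\ref{thm:formality}. The key observation is that $1$-formality is inherited by split subgroups, so it suffices to exhibit, for each $n\ge 4$, a non-$1$-formal split subgroup inside $\PV_n^+$ and inside $\PV_n$. Since we have already shown in Lemma~\ref{lem:pv4+} that $\PV_4^+$ is \emph{not} $1$-formal, the entire argument rests on propagating this single failure upward and sideways through the lattice of groups.

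First I would treat the upper-triangular groups $\PV_n^+$. By Lemma~\ref{lem:split}, for each $n\ge 2$ there is a split monomorphism $\iota_n^+\colon \PV_n^+\rightarrow \PV_{n+1}^+$, and composing these gives a split monomorphism $\PV_4^+\hookrightarrow \PV_n^+$ for every $n\ge 4$. If $\PV_n^+$ were $1$-formal, then Theorem~\ref{thm:formality} would force its split subgroup $\PV_4^+$ to be $1$-formal as well, contradicting Lemma~\ref{lem:pv4+}. Hence $\PV_n^+$ is non-$1$-formal for all $n\ge 4$.

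Next I would handle the full pure virtual braid groups $\PV_n$. Here I would invoke Corollary~\ref{cor:split}, which provides a split monomorphism $j_n\colon \PV_n^+\hookrightarrow \PV_n$. Combined with the previous paragraph, this realizes $\PV_4^+$ as a split subgroup of $\PV_n$ for every $n\ge 4$ (first split $\PV_4^+\hookrightarrow \PV_n^+$, then split $\PV_n^+\hookrightarrow \PV_n$; a composite of split monomorphisms is again a split monomorphism). Applying Theorem~\ref{thm:formality} once more, the $1$-formality of $\PV_n$ would again force that of $\PV_4^+$, a contradiction. Therefore $\PV_n$ is non-$1$-formal for all $n\ge 4$ as well.

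The argument is essentially routine given the scaffolding already in place; there is no genuine obstacle at this stage, since all the analytic content has been absorbed into Lemma~\ref{lem:pv4+} (the resonance computation and the Tangent Cone Theorem) and into the split-injection lemmas. The only point requiring minor care is the verification that composites of the maps $\iota_n^+$, and of $\iota_n^+$ with $j_n$, remain \emph{split} monomorphisms; this is immediate because one may compose the corresponding split surjections $\pi_n^+$ and $\phi$ in the reverse order to obtain the required retraction. The final statement that non-$1$-formal implies not filtered-formal follows from the fact, recalled in \S\ref{subsec:malcev}, that these groups are graded-formal, so any failure of $1$-formality must come from the filtered-formality side.
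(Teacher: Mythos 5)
Your proposal is correct and follows essentially the same route as the paper: propagate the non-$1$-formality of $\PV_4^+$ upward via the split monomorphisms $\iota_n^+$ of Lemma~\ref{lem:split}, then transfer it to $\PV_n$ via the split monomorphism $j_n$ of Corollary~\ref{cor:split}, invoking Theorem~\ref{thm:formality} at each step. Your explicit remark that composites of split monomorphisms are again split (by composing the retractions in reverse order) is a small point the paper leaves implicit, but otherwise the two arguments coincide.
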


 \begin{proof}
As shown in Lemma \ref{lem:split}, there is a split injection from $\PV^+_n$ 
to $\PV^+_{n+1}$. Since $\PV^+_4$ is not $1$-formal,
Theorem \ref{thm:formality}, then, insures that the groups $\PV^+_n$ 
are not $1$-formal for $n\geq4$.
 
By Corollary \ref{cor:split}, there is a split monomorphism $\PV^+_n\rightarrow \PV_n$.
From Theorem \ref{thm:pvn nonformal}, we know that the group $\PV^+_n$ is 
not $1$-formal for $n\geq 4$.
Therefore, by Theorem \ref{thm:formality}, the group $\PV_n$ is not 
 $1$-formal for $n\geq4$.
\end{proof}
 
\begin{corollary}
\label{cor:pvn}
The groups $\PV^+_n$ and $\PV_n$ are not filtered formal for $n\geq4$.
\end{corollary}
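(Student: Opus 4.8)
The plan is to deduce this corollary directly from Theorem \ref{thm:pvn nonformal}, combined with the known graded-formality of the groups in question and the decomposition of $1$-formality recorded in \S\ref{subsec:malcev}. Recall from there that a finitely generated group is $1$-formal precisely when it is simultaneously graded-formal and filtered-formal; this equivalence is the only structural fact I need.

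First I would invoke the fact—due to Bartholdi et al.~\cite{Bartholdi-E-E-R} and Lee \cite{Lee}, and recalled at the start of \S\ref{subsec:f-pvn}—that $\PV_n$ and $\PV_n^+$ are graded-formal for every $n\ge 1$. Next, Theorem \ref{thm:pvn nonformal} tells us that for $n\ge 4$ neither $\PV_n$ nor $\PV_n^+$ is $1$-formal. Combining these two inputs, I would argue by contradiction: suppose one of these groups were filtered-formal for some $n\ge 4$; being also graded-formal, it would then be $1$-formal by the equivalence above, contradicting Theorem \ref{thm:pvn nonformal}. Hence neither group can be filtered-formal when $n\ge 4$.

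There is no genuine obstacle here, since all the analytic content has already been absorbed into Theorem \ref{thm:pvn nonformal} (itself resting on the resonance computation in Lemma \ref{lem:resonancePV4+} and the Tangent Cone Theorem applied in Lemma \ref{lem:pv4+}). The only additional ingredient is the already-available graded-formality, which is exactly what upgrades the conclusion \emph{not $1$-formal} to the sharper conclusion \emph{not filtered-formal}. The one point worth stating carefully is that it is the failure of filtered-formality, rather than of graded-formality, that is being isolated: because graded-formality holds throughout, the obstruction to $1$-formality detected via the non-linearity of $\cR^1_1(\PV_4^+)$ must reside entirely in the filtered-formality factor, and this localization of the obstruction is precisely what the corollary records.
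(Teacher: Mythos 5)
Your proposal is correct and matches the paper's (implicit) argument exactly: the corollary follows from Theorem \ref{thm:pvn nonformal} together with the graded-formality of $\PV_n$ and $\PV_n^+$ and the fact that $1$-formality is equivalent to graded-formality plus filtered-formality, which is precisely how the paper's summary paragraph after the corollary frames it. No gaps.
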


To summarize, the groups $\PV_n$ and $\PV_n^+$ are always 
graded-formal.  Furthermore, they are $1$-formal (equivalently, 
filtered-formal) if and only if $n\le 3$.  This completes the proof 
of Theorem \ref{thm:intro formal} from the Introduction. 

\section{Chen Lie algebras and Alexander invariants}
\label{sec:ChenAlex}

In this section, we discuss the relationship between the Chen Lie 
algebra and the Alexander invariant of a finitely generated group. 

\subsection{Chen Lie algebras and Chen ranks}
\label{subsec:ChenLie}

Let $G$ be a finitely generated group. 
The \textit{Chen Lie algebra}\/ of $G$, as defined by Chen \cite{Chen51}, 
is the associated graded Lie algebra of its second derived quotient, 
$G/G^{\prime\prime}$. The projection $\pi\colon G\surj G/G^{\prime\prime}$ induces 
an epimorphism, $\gr(\pi) \colon \gr(G)\surj \gr(G/G^{\prime\prime})$. It is readily 
verified that $\gr_k(\pi)$ is an isomorphism for $k\leq 3$.

The integers $\theta_k(G):=\rank(\gr_k(G/G^{\prime\prime}))$ are called 
the {\em Chen ranks}\/ of $G$.

\begin{lemma}
\label{lem:ChenLie}
The Chen Lie algebra of the product of two groups $G_1$ and $G_2$ is 
isomorphic to the direct sum 
$\gr(G_1/G_1^{\prime\prime})\oplus \gr(G_2/G_2^{\prime\prime})$, as graded Lie algebras.
\end{lemma}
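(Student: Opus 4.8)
The plan is to establish the isomorphism by exhibiting compatible presentations on both sides and checking they match degree by degree. First I would recall the two key facts about the associated graded Lie algebra functor applied to products: for any two finitely generated groups $G_1$ and $G_2$, there is a natural isomorphism of graded Lie algebras $\gr(G_1\times G_2)\cong \gr(G_1)\oplus \gr(G_2)$. This follows because the lower central series of a direct product decomposes as $\Gamma_k(G_1\times G_2)=\Gamma_k(G_1)\times \Gamma_k(G_2)$, so the quotients split as direct sums and the induced bracket respects the splitting (the cross-bracket $[\Gamma_j(G_1),\Gamma_l(G_2)]$ vanishes since the two factors commute in $G_1\times G_2$).

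The main step is then to relate the second derived quotient of a product to the product of second derived quotients. I would verify that $(G_1\times G_2)''=G_1''\times G_2''$. Indeed, the derived subgroup of a direct product is $(G_1\times G_2)'=G_1'\times G_2'$, and iterating, $(G_1\times G_2)''=(G_1'\times G_2')'=G_1''\times G_2''$. Consequently,
\[
(G_1\times G_2)/(G_1\times G_2)'' \;\cong\; (G_1/G_1'')\times (G_2/G_2''),
\]
as an isomorphism of groups. This reduces the problem to the previously established behavior of $\gr$ on direct products.

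Combining these two observations gives the result: applying the direct-product formula for $\gr$ to the metabelianized factors, I obtain
\[
\gr\big((G_1\times G_2)/(G_1\times G_2)''\big)
\;\cong\; \gr\big((G_1/G_1'')\times (G_2/G_2'')\big)
\;\cong\; \gr(G_1/G_1'')\oplus \gr(G_2/G_2''),
\]
which is exactly the asserted decomposition of the Chen Lie algebra. In particular this yields the additivity of Chen ranks, $\theta_k(G_1\times G_2)=\theta_k(G_1)+\theta_k(G_2)$.

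I expect the main obstacle to be the careful verification of the identity $(G_1\times G_2)''=G_1''\times G_2''$ and, more subtly, the claim that the isomorphism of groups induces an isomorphism of \emph{graded} Lie algebras rather than merely a linear isomorphism in each degree. The bracket-compatibility is the crux: one must confirm that under the splitting of $\Gamma_k/\Gamma_{k+1}$ the Lie bracket carries no cross terms, which ultimately rests on the fact that elements of the two direct factors commute. This is routine but is where the only real content of the proof lies.
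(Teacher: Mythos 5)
Your proof is correct, but it takes a genuinely different route from the paper's. The paper constructs the comparison map first: the projections $G_1\times G_2\to G_i$ descend to second derived quotients and yield an epimorphism $\gr\big((G_1\times G_2)/(G_1\times G_2)''\big)\surj \gr(G_1/G_1'')\oplus\gr(G_2/G_2'')$, and then invokes the additivity of the Chen ranks, $\theta_k(G_1\times G_2)=\theta_k(G_1)+\theta_k(G_2)$ (quoted from Cohen--Suciu), to force this surjection to be an isomorphism by a dimension count in each degree. You instead argue structurally: since $(G_1\times G_2)'=G_1'\times G_2'$ and hence $(G_1\times G_2)''=G_1''\times G_2''$, the metabelianization of a product is the product of the metabelianizations, and the result follows from the standard fact that $\Gamma_k(H_1\times H_2)=\Gamma_k(H_1)\times\Gamma_k(H_2)$, so that $\gr$ converts finite direct products into direct sums of graded Lie algebras (with vanishing cross-brackets because the factors commute). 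Your argument is more self-contained --- it does not rely on the external Chen-ranks computation, and indeed it \emph{derives} the additivity $\theta_k(G_1\times G_2)=\theta_k(G_1)+\theta_k(G_2)$ as a corollary rather than consuming it as an input. The paper's route is shorter on the page because it outsources the real work to the cited reference; yours makes the bracket-compatibility and the behavior of the derived and lower central series under products explicit, which is exactly where the content lies.
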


\begin{proof}
The canonical projections $G_1\times G_2\to G_i$ for $i=1,2$ restrict 
to homomorphisms on the second derived subgroups, 
$(G_1\times G_2)^{\prime\prime}\to G_i^{\prime\prime}$.
Hence, there is an epimorphism $\phi\colon 
G_1\times G_2/(G_1\times G_2)^{\prime\prime}\to G_1/G_1^{\prime\prime}\times
G_2/G_2^{\prime\prime}$, inducing an epimorphism 
\begin{equation}
\label{eq:grphi}
\xymatrix{\gr(\phi)\colon 
\gr((G_1\times G_2)/(G_1\times G_2)^{\prime\prime})\ar@{->>}[r]& 
\gr(G_1/G_1^{\prime\prime})\oplus \gr(G_2/G_2^{\prime\prime})}.
\end{equation}
By \cite[Corollary 1.10]{Cohen-Suciu99T}, we have that 
\begin{equation}
\label{eq:thetapro}
\theta_k(G_1\times G_2)=\theta_k(G_1)+\theta_k(G_2). 
\end{equation}
Hence, the homomorphism $\gr(\phi)$
is an isomorphism of graded Lie algebras.
\end{proof}

In \cite[Theorem 8.4]{SW1}, we prove the following.

\begin{theorem}[\cite{SW1}]
\label{thm:sw1}
Let $G$ be a finitely generated group.   The canonical projection 
$G\surj G/G^{\prime\prime}$ induces then an epimorphism of graded 
Lie algebras,
\begin{equation}
\label{eq:surjChen}
\xymatrixcolsep{20pt}
\xymatrix{\Phi\colon ~~\gr(G)/\gr(G)^{\prime\prime} \ar@{->>}[r]& 
\gr(G/G^{\prime\prime})}.
\end{equation}
Furthermore, if $G$ is filtered-formal, then the above map is an isomorphism.
\end{theorem}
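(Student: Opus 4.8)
The plan is to construct $\Phi$ by first showing that the Chen Lie algebra $\gr(G/G^{\prime\prime})$ is metabelian, so that the canonical surjection on associated graded Lie algebras factors through the metabelianization of $\gr(G)$; and then to prove injectivity in the filtered-formal case by passing to Malcev Lie algebras, where the maximal metabelian quotient is transparent.

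First I would establish the epimorphism. The projection $\pi\colon G\surj G/G^{\prime\prime}$ carries the lower central series of $G$ onto that of $G/G^{\prime\prime}$, hence induces a surjective morphism of graded Lie algebras $\gr(\pi)\colon \gr(G)\surj \gr(G/G^{\prime\prime})$. The key observation is that the target is metabelian. Writing $H=G/G^{\prime\prime}$, one checks that $H^{\prime\prime}=1$, so $H$ is a metabelian group. Now any homogeneous element of the derived subalgebra $\gr(H)^{\prime}$ is a $\C$-linear combination of classes of group commutators, and hence lies in the image of $H^{\prime}$; consequently a bracket of two such elements is the class of a commutator of elements of $H^{\prime}$, which lies in $H^{\prime\prime}=1$ and therefore vanishes. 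Thus $\gr(H)^{\prime\prime}=0$. Since $\gr(\pi)$ is a Lie map, it sends $\gr(G)^{\prime\prime}$ into $\gr(H)^{\prime\prime}=0$, so it descends to the desired epimorphism $\Phi\colon \gr(G)/\gr(G)^{\prime\prime}\surj \gr(G/G^{\prime\prime})$.

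For the second assertion I would work with Malcev Lie algebras. Recall Quillen's theorem that there is a natural isomorphism $\gr\circ\fm\cong \gr$ for finitely generated groups. Applying $\fm$ to $\pi$ yields a surjection $\fm(\pi)\colon \fm(G)\to \fm(G/G^{\prime\prime})$, and the crucial input is that the Malcev functor takes the maximal metabelian quotient of groups to the maximal metabelian quotient of complete filtered Lie algebras, so that $\fm(G/G^{\prime\prime})\cong \fm(G)/\overline{\fm(G)^{\prime\prime}}$, the closure being taken in the filtration topology. Now invoke filtered-formality: an isomorphism $\fm(G)\isom \widehat{\gr}(G)$ identifies $\overline{\fm(G)^{\prime\prime}}$ with $\overline{\widehat{\gr}(G)^{\prime\prime}}$. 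Taking associated graded, using that $\gr$ of the degree completion of a graded Lie algebra returns that algebra, and using that $\gr$ is exact on the resulting metabelianization sequence, I obtain $\gr(G/G^{\prime\prime})\cong \gr(\fm(G/G^{\prime\prime}))\cong \gr(G)/\gr(G)^{\prime\prime}$, where the first isomorphism is Quillen's. Tracing the naturality of $\gr\circ\fm\cong\gr$ shows this composite is inverse to $\Phi$, which is therefore an isomorphism.

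The main obstacle is the compatibility of Malcev completion with metabelianization, namely the identification $\fm(G/G^{\prime\prime})\cong \fm(G)/\overline{\fm(G)^{\prime\prime}}$, together with the verification that passing to associated graded preserves exactness of the sequence $\overline{\fm(G)^{\prime\prime}}\inj \fm(G)\surj \fm(G/G^{\prime\prime})$. The first point rests on the universal property of Malcev completion and the fact that $G/G^{\prime\prime}$ is the universal metabelian quotient; the second is a strictness statement for the filtration that becomes transparent in the filtered-formal case, where every object is the degree completion of a graded Lie algebra and the relevant ideals are graded. An alternative route to injectivity, sidestepping the completions, would be a dimension count: filtered-formality gives $\dim(\gr(G)/\gr(G)^{\prime\prime})_{k+2}=\dim\gr_{k+2}(G/G^{\prime\prime})$ in every degree via the isomorphism $\varphi$ of Theorem~\ref{thm:compare}, whence the epimorphism $\Phi$ is forced to be an isomorphism---provided one first checks that this does not depend circularly on the present theorem.
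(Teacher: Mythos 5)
The paper does not actually prove this statement here: it is quoted from \cite[Theorem 8.4]{SW1}, so there is no in-paper argument to compare against. Your proof is, however, essentially the route taken in that reference, and it is correct. The first half (surjectivity of $\gr(\pi)$, metabelianity of $\gr(G/G'')$ via bilinearity of the bracket and $H''=1$, hence the factorization through $\gr(G)/\gr(G)''$) is complete as written. In the second half, the entire weight rests on the identification $\fm(G/G'')\cong \fm(G)/\overline{\fm(G)''}$; be aware that this is a genuine theorem (it is proved in \cite{Papadima-Suciu04} by an exactness argument for Malcev completion applied to the derived series of $G$), not a formal consequence of a universal property --- the universal-metabelian-quotient heuristic does not by itself produce the \emph{closure} of $\fm(G)''$ on the right-hand side, so you should cite or prove that lemma rather than wave at it. Granting that input, the remaining steps --- transporting the closed ideal along the filtered isomorphism $\fm(G)\cong\widehat{\gr}(G)$, observing that the closure of $\widehat{\gr}(G)''$ is the degree completion of the homogeneous ideal $\gr(G)''$, and applying Quillen's natural isomorphism $\gr\circ\fm\cong\gr$ --- are all valid, and the strictness issue you flag is indeed harmless precisely because all ideals involved are graded in the filtered-formal case. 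You were also right to distrust your alternative dimension count: in this paper the isomorphism statement of Proposition \ref{prop:gamma compare} is \emph{deduced from} the present theorem, so that route is circular and must be discarded.
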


In the case when $G$ is $1$-formal, this theorem recovers a result from 
\cite{Papadima-Suciu04}, which insures there is a graded Lie algebra isomorphism 
$\fh(G)/\fh(G)^{\prime\prime} \cong  \gr(G/G^{\prime\prime})$.
 
\subsection{Alexander invariants}
\label{subsec:alex inv}
Once again, let $G$ be a finitely generated group.  Let us consider the 
$\C$-vector space $H_1(G',\C)=G^{\prime}/G^{\prime\prime} \otimes \C$. 
This vector space can be viewed as a (finitely generated) module over the group algebra 
$\C[H]$, with the abelianization $H=G/G^{\prime}$ acting on $G'/G''$ by conjugation.  
Following \cite{Massey80}, we denote this module by $B_{\C}(G)$, or $B(G)$ for short, and 
call it the {\em Alexander invariant}\/ of $G$.  
We refer to \cite{Massey80, Cohen-Suciu99T, Papadima-Suciu04} for 
ways to compute presentations for the module $B(G)$ in 
various degrees of generality.

The module $B=B(G)$ may be 
filtered by powers of the augmentation ideal, $I=\ker(\varepsilon\colon \C[H] \to \C)$, 
where  $\varepsilon$ is the ring map defined by $\varepsilon(h)=1$ for all $h\in H$. 
The associated graded module,
\begin{equation}
\label{eq:grb}
\gr(B)=\bigoplus_{k\ge 0} I^k   B/I^{k+1}  B,
\end{equation}
then, is a module over the graded ring $\gr(\C[H])=\bigoplus_{k\ge 0} I^k /I^{k+1}$.  
We call this module the {\em associated graded Alexander invariant}\/ of $G$. 

Work of W.~Massey \cite{Massey80} implies that 
the map $j\colon G'/G''\to G/G''$ restricts to isomorphisms 
\begin{equation}
\label{eq:Masseyiso}
\xymatrix{  I^k B  \ar[r] &\Gamma_{k+2}(G/G'')}
\end{equation}
for all $k\geq 0$. 
Taking successive quotients of the respective filtrations 
and tensoring with $\C$, we obtain isomorphisms
\begin{equation}
\label{eq:MasseyAlex}
\xymatrix{\gr_k(j)\colon \gr_k(B(G)) \ar[r] &\gr_{k+2}(G/G'')} \textrm{ for } k\geq 0.
\end{equation}
Consequently, the Chen ranks of $G$  can be expressed  in terms 
of the Hilbert series 
of the graded module $\gr(B(G))$, as follows:
\begin{equation}
\label{eq:MasseyChen}
\sum\limits_{k\geq 2}\theta_{k}(G)\cdot t^{k-2}=\Hilb (\gr(B(G)),t).
\end{equation}

\begin{remark}
\label{remk:comp}
If the group $G=\langle x_1,\dots,x_n \mid r_1,\dots, r_m\rangle$ 
is a finitely presented, commu\-tator-relators group, then the Hilbert 
series of the module $\gr(B(G))$ may be computed using the
algorithm from \cite{Cohen-Suciu95, Cohen-Suciu99}.  To start 
with, identify $\C[H]$ with $\Lambda=\C[t_1^{\pm 1}, \dots, t_n^{\pm 1}]$.  
The Alexander invariant of $G$ admits then a finite 
presentation for the form 
\begin{equation}
\label{AI presentation}
\xymatrixcolsep{26pt}
\xymatrix{
\Lambda^{\binom{n}{3}}\oplus \Lambda^{m} \ar^(.6){\delta_3+\nu_G}[r]& 
\Lambda^{\binom{n}{2}}  \ar[r]& B(G) \ar[r]& 0
}, 
\end{equation}
Here, $\delta_i$ is the $i$-th differential in the standard Koszul 
resolution of $\C$ over $\Lambda$,
and $\nu_G$ is a map satisfying $\delta_2\circ \nu_G=D_G$,
where $D_G$ is the abelianization of the Jacobian matrix 
of  Fox derivatives of the relators.  Next, one computes a 
Gr\"obner basis for the module $B(G)$, in a suitable monomial ordering.  
An application of the standard tangent cone algorithm yields then  
a presentation for $\gr(B(G))$, from which ones computes  
the Hilbert series of $\gr(B(G))$.  Finally, the Chen ranks
of $G$ are given by formula \eqref{eq:MasseyChen}.
\end{remark}

\subsection{Chen ranks of the free groups}
\label{subsec:chenF}

As shown in \cite{Chen51}, the Chen ranks of the free group $F_n$ are given by 
$\theta_1(F_n)=n$ and 
\begin{equation}
\label{eq:ChenFreeGroups}
\theta_k(F_n)=\binom{n+k-2}{k} (k-1)  ~\textrm{ for }~  k\ge 2 .
\end{equation}
Equivalently, 
by Massey's formula \eqref{eq:MasseyChen}, the Hilbert series 
for the associated graded Alexander invariant of $F_n$ is given by 
\begin{equation}
\label{eq:bfn}
\Hilb (\gr(B(F_n)),t)=\frac{1}{t^2} \cdot \left(1 - \frac{1-nt}{(1-t)^n} \right),
\end{equation}
an identity which can also be verified directly, by using the fact that 
$B(F_n)$ is the cokernel of the third boundary map of the Koszul resolution 
$\bigwedge (\Z^n) \otimes \C[\Z^n]$.

From formula \eqref{eq:bfn}, we see that the generating and exponential 
generating functions for the Hilbert series of the associated graded Alexander 
invariants of the sequence of free 
groups $\mathbf{F}=\{ F_n\}_{n\geq 1}$ are given by
\begin{align}
\label{eq:chen free}
\sum_{n=1}^{\infty} \Hilb (\gr(B(F_n)),t)\cdot u^n &=\frac{u^2}{(1-u)(1-t-u)^2}, 
\\  \notag
\sum_{n=1}^{\infty} \Hilb (\gr(B(F_n)),t)\cdot \frac{u^n}{n!} &=
\frac{e^u}{t^2}+\frac{e^{u/(1-t)}}{t^2} \left(\frac{tu}{1-t}-1\right).
\end{align}
 
\subsection{Chen ranks of the pure braid groups}
\label{subsec:chenPure}
A comprehensive algorithm for computing the Chen ranks of finitely 
presented groups was developed in \cite{Cohen-Suciu95, Cohen-Suciu99}, 
leading to the following expressions for the Chen ranks 
of the pure braid groups:
\begin{equation}
\label{eq:chenpn}
\text{$\theta_1(P_n)=\binom{n}{2}$,\:  $\theta_2(P_n)=\binom{n}{3}$,\:
and\: $\theta_k(P_n)=(k-1)\binom{n+1}{4}$\: for $k\ge 3$},   
\end{equation}
or, equivalently, 
\begin{equation}
\label{eq:bpn}
\Hilb (\gr(B(P_n)),t)=\binom{n+1}{4} \frac{1}{(1-t)^2} -\binom{n}{4}.
\end{equation}

It follows that the two generating functions for the Hilbert series of 
the associated graded Alexander invariants of the sequence of pure braid 
groups $\mathbf{P}=\{ P_n\}_{n\geq 1}$ are given by 
\begin{align}
\label{eq:chenpnseries}
\sum_{n=1}^{\infty} \Hilb (\gr(B(P_n)),t)\cdot u^n &=
\frac{u^3}{(1-u)^5}\left(\frac{1}{(1-t)^2}-u\right), 
\\ \notag
\sum_{n=1}^{\infty} \Hilb (\gr(B(P_n)),t)\cdot \frac{u^n}{n!} &=
\frac{e^uu^3}{24} \left( \frac{u+4}{(1-t)^2} -u
\right).
\end{align}

\section{Infinitesimal Alexander invariants}
\label{sec:infalex}

We now turn to the infinitesimal Alexander invariants,  
$\fB(\fh(G))$ and $\fB(\gr(G))$, and the way they relate to the associated 
graded Alexander invariant, $\gr(B(G))$.

\subsection{The infinitesimal Alexander invariant of a Lie algebra}
\label{subsec:inf alex}
We start in a more general context. 
Let  $\fg$ be a finitely generated graded Lie algebra, with 
graded pieces $\fg_k$, for $k\ge 1$.  
Then both the derived algebra, $\fg'$, and the second derived 
algebra $\fg''=(\fg')'$, are graded sub-Lie algebras.  Thus, 
the maximal metabelian quotient, $\g/\g''$, is in a natural 
way a graded Lie algebra, with derived subalgebra $\g'/\g''$.  
Define the Chen ranks of $\fg$ to be 
\begin{equation}
\label{eq:inf chen ranks}
\theta_k(\fg)=\dim (\fg/\fg'')_k.
\end{equation}

Following \cite{Papadima-Suciu04}, we associate to $\g$ a graded 
module over the symmetric algebra $S=\Sym(\fg_1)$, as follows.  
The adjoint representation of $\g_1$ on $\g/\g''$ 
defines an $S$-action on $\g'/\g''$, given by $h\cdot \bar{x}=\overline{[h,x]}$, 
for $h\in \g_1$ and $x\in \g'$.  Clearly, this action is compatible with the 
grading on $\g'/\g''$.  We then let the {\em infinitesimal Alexander invariant}\/ 
of $\g$ to be the graded $S$-module 
\begin{equation}
\label{eq:inf alex}
\fB(\fg)=\fg^{\prime}/\fg^{\prime\prime}.
\end{equation} 

Assume now that the graded Lie algebra $\fg=\bigoplus_{k\ge 1} \fg_k$ is generated 
in degree $1$.  We then have $\fg'=\bigoplus_{k\ge 2} \fg_k$.  Thus, since the grading 
for $S$ starts with $S_0=\C$, we are led to define the grading on $\fB(\g)$ as 
\begin{equation}
\label{eq:fb grading}
 \fB(\g)_{k} = (\fg'/\fg'')_{k+2},  \textrm{ for } k\geq 0.
 \end{equation}

We then have the following `infinitesimal' version of Massey's 
formula \eqref{eq:MasseyChen}.

\begin{prop}
\label{prop:MC Lie alg}
Let $\fg$ be a finitely generated, graded Lie algebra $\fg$ 
generated in degree $1$.  Then the Chen ranks of $\fg$ 
are given by
\begin{equation}
\label{eq:MC Lie alg}
\sum\limits_{k\geq 2}\theta_{k}(\fg)\cdot t^{k-2}=\Hilb (\fB(\fg),t).  
\end{equation}
\end{prop}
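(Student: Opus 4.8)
The plan is to reduce the identity to a term-by-term comparison of graded pieces, exploiting the standing hypothesis that $\fg$ is generated in degree $1$. As recorded just above the statement, this hypothesis forces $\fg'=\bigoplus_{k\ge 2}\fg_k$, so in particular the abelianization $\fg/\fg'$ is concentrated in degree $1$. This is the only structural input needed; everything else is bookkeeping of the grading.

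First I would write down the defining short exact sequence of graded vector spaces for the maximal metabelian quotient,
\[
0 \to \fg'/\fg'' \to \fg/\fg'' \to \fg/\fg' \to 0 .
\]
Since $\fg/\fg'$ lives only in degree $1$, comparing degree-$k$ components for $k\ge 2$ gives $(\fg/\fg'')_k=(\fg'/\fg'')_k$. I would then unwind the grading convention for the infinitesimal Alexander invariant: by definition $\fB(\fg)=\fg'/\fg''$, graded so that $\fB(\fg)_j=(\fg'/\fg'')_{j+2}$ for $j\ge 0$. Combining these two observations, for every $k\ge 2$ one obtains
\[
\theta_k(\fg)=\dim(\fg/\fg'')_k=\dim(\fg'/\fg'')_k=\dim\fB(\fg)_{k-2}.
\]

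Finally I would assemble the generating series, substituting the last display into the left-hand side and reindexing with $j=k-2$:
\[
\sum_{k\ge 2}\theta_k(\fg)\,t^{k-2}=\sum_{j\ge 0}\dim\fB(\fg)_j\,t^{j}=\Hilb(\fB(\fg),t),
\]
which is exactly the asserted formula. There is no genuine obstacle here: the whole content is that degree-$1$ generation makes $\fg/\fg''$ coincide with $\fg'/\fg''$ in all degrees $\ge 2$, after which the shift by $2$ built into the grading of $\fB(\fg)$ matches the exponent $t^{k-2}$ precisely. The only point deserving a moment's care is to confirm that $(\fg'/\fg'')_1=0$ (immediate, since $\fg'$ starts in degree $2$), so that the lone degree-$1$ discrepancy between $\fg/\fg''$ and $\fg'/\fg''$ sits outside the range $k\ge 2$ over which the Chen-rank sum is taken and therefore contributes nothing.
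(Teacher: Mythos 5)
Your proof is correct and follows essentially the same route as the paper: the same short exact sequence $0\to\fg'/\fg''\to\fg/\fg''\to\fg/\fg'\to 0$, the same observation that degree-$1$ generation concentrates $\fg/\fg'$ in degree $1$, and the same regrading via $\fB(\fg)_j=(\fg'/\fg'')_{j+2}$. Nothing is missing.
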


\begin{proof}
Since $\fg$ is generated in degree $1$, we have that 
$ \fg/\fg' \cong \fg_1$.  Using now 
the exact sequence of graded Lie algebras $0\to \fg'/\fg'' \to \fg/\fg''  \to \fg/\fg'  \to 0$, 
we find that $(\fg/\fg'')_k = (\fg'/\fg'')_k$ for all $k\ge 2$.  The claim then 
follows from \eqref{eq:inf chen ranks} and \eqref{eq:fb grading}.
\end{proof}

\subsection{The infinitesimal Alexander invariants of a group}
\label{subsec:bhbgr}

Let again $G$ be a finitely generated group. Denote by $H=G_{\ab}$ its 
abelianization, and identify $\h_1(G)=\gr_1(G)$ with $H\otimes \C$.  
Finally, set $S=\Sym(H\otimes \C)$.  The procedure outlined in 
\S\ref{subsec:inf alex} yields two $S$-modules attached to $G$.  

The first one is $\fB(G)=\fB(\fh(G))$, the infinitesimal 
Alexander invariant of the holonomy Lie algebra of $G$. 
(When $G$ is a finitely presented, commutator-relators 
group, this $S$-module coincides with the `linearized Alexander invariant' 
from \cite{Cohen-Suciu99T, Matei-Suciu00},  see 
\cite{Papadima-Suciu04}).   
The second one is $\fB(\gr(G))$, the infinitesimal Alexander invariant of the 
associated graded Lie algebra of $G$.  The next result provides 
a natural comparison map between these $S$-modules.

\begin{prop}
\label{prop:alexinv compare}
The  canonical epimorphism $\Psi\colon \fh(G)\surj \gr(G)$ from 
\eqref{eq:holonomysurj} induces an epimorphism of $S$-modules, 
\[
\xymatrix{ \psi\colon \fB(\fh(G)) \ar@{->>}[r] &\fB(\gr(G))}.
\]
Moreover, if $G$ is graded-formal, then $\psi$ is an isomorphism. 
\end{prop}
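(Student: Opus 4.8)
The plan is to show that $\Psi$ descends to the metabelian quotients, that the resulting map is $S$-linear, and then to read off surjectivity and (in the graded-formal case) bijectivity directly from the corresponding properties of $\Psi$. The whole argument is the functoriality of the construction $\fg\mapsto\fg'/\fg''$, together with the bookkeeping needed to view both invariants as modules over the \emph{same} ring $S$.

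First I would note that, being a morphism of graded Lie algebras, $\Psi$ carries $\fh(G)'=[\fh(G),\fh(G)]$ into $\gr(G)'$ and $\fh(G)''=[\fh(G)',\fh(G)']$ into $\gr(G)''$; indeed, $\Psi([\fh(G)',\fh(G)'])=[\Psi(\fh(G)'),\Psi(\fh(G)')]\subseteq[\gr(G)',\gr(G)']=\gr(G)''$. Consequently $\Psi$ descends to a well-defined, degree-preserving map $\psi\colon\fh(G)'/\fh(G)''\to\gr(G)'/\gr(G)''$, that is, $\psi\colon\fB(\fh(G))\to\fB(\gr(G))$, compatible with the gradings of \eqref{eq:fb grading}.

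Next I would verify that $\psi$ is a morphism of $S$-modules. The key point is that both $\fB(\fh(G))$ and $\fB(\gr(G))$ are modules over the \emph{same} ring $S=\Sym(H\otimes\C)$, since $\fh(G)_1=\gr(G)_1=H\otimes\C$ and $\Psi$ restricts to the identity on this common degree-$1$ piece. As both $S$-actions are induced from the respective adjoint representations, for $h\in H\otimes\C$ and $x\in\fh(G)'$ one computes $\psi(h\cdot\bar{x})=\psi(\overline{[h,x]})=\overline{[\Psi(h),\Psi(x)]}=\overline{[h,\Psi(x)]}=h\cdot\psi(\bar{x})$, establishing $S$-linearity.

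For surjectivity I would use that $\Psi$ is onto, which forces $\Psi(\fh(G)')=\gr(G)'$ (every bracket $[\Psi(a),\Psi(b)]=\Psi([a,b])$ lies in the image of $\Psi$); lifting any class of $\gr(G)'/\gr(G)''$ along $\Psi|_{\fh(G)'}$ then realizes it in the image of $\psi$, so $\psi$ is an epimorphism. Finally, if $G$ is graded-formal then $\Psi$ is an isomorphism of graded Lie algebras, whence it carries $\fh(G)'$ onto $\gr(G)'$ and $\fh(G)''$ onto $\gr(G)''$; the inverse $\Psi^{-1}$ descends to a two-sided inverse of $\psi$, so $\psi$ is an isomorphism. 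I do not expect a genuine obstacle here; the only step deserving care is the identification of the two $S$-module structures, which rests entirely on $\Psi$ restricting to the identity in degree $1$.
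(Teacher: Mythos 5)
Your proof is correct and follows the same route as the paper: the paper likewise observes that $\Psi$ preserves the derived series, hence descends to an epimorphism on the metabelian quotients, which is $S$-linear because the actions are induced by the adjoint representations, and becomes an isomorphism when $\Psi$ is one. Your write-up merely spells out the details (the degree-one identification and the explicit $S$-linearity check) that the paper leaves implicit.
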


\begin{proof}
The graded Lie algebra map  $\Psi\colon \fh(G)\surj \gr(G)$ preserves 
derived series, and thus induces an epimorphism 
$\psi\colon \fh(G)'/\fh(G)'' \surj  \gr(G)'/\gr(G)''$. By the discussion 
from \S\ref{subsec:inf alex}, this map can also be viewed 
as a map $\psi\colon \fB(\fh(G)) \surj \fB(\gr(G))$ 
of graded $S$-modules.  

Finally, if $G$ is graded-formal, i.e., if $\Psi$ is an isomorphism, then 
clearly $\psi$ is also an isomorphism.
\end{proof}

\begin{remark}
\label{rem:comp-bis}
For a finitely presented group $G$, a finite presentation for the $S$-module 
$\fB(\fh(G))$ is given in \cite{Papadima-Suciu04}.  This presentation may be used 
to compute the holonomy Chen ranks $\theta_k(\fh(G))$ 
from the Hilbert series of $\fB(\fh(G))$, using an approach 
analogous to the one described in Remark \ref{remk:comp}.  
We refer to \cite{SW3} for more information on this subject, 
illustrated with detailed computations for the (upper) pure 
welded braid groups.
\end{remark}

\subsection{Another filtration on $G'/G''$}
\label{subsec:ind filt}
Next, we compare the module $\fB(\gr(G))$ to another, naturally defined $S$-module 
associated to the group $G$.  Let $\gr^{\widetilde{\Gamma}}(G'/G'')$ be the associated 
graded Lie algebra of $G'/G''$ with respect to the induced filtration 
\begin{equation}
\label{eq:inducedfil}
\widetilde{\Gamma}_k(G'/G''):=  (G'/G'') \cap \Gamma_k(G/G''). 
\end{equation}
The terms of this filtration fit into short exact sequences 
\begin{equation}
\label{eq:gamma exact}
\xymatrix{
0\ar[r]&\widetilde{\Gamma}_k(G'/G'')\ar[r]& \Gamma_k(G/G'')\ar[r]& \Gamma_k(G/G')\ar[r]& 0
}.
\end{equation}
Noting that $\Gamma_k(G/G')=0$ for $k\geq 2$, we deduce that 
\begin{equation}
\label{eq: TildeFiltration}
\widetilde{\Gamma}_k(G'/G'')= \Gamma_k(G/G''), \textrm{ for } k\geq 2.
\end{equation}

As before, it is readily checked that the adjoint representation of 
$\gr_1(G/G'')= H\otimes \C$ on $\gr(G/G'')$ induces an $S$-action 
on $\gr^{\widetilde{\Gamma}}(G'/G'')$, 
preserving the grading.  Hence, the  Lie algebra 
$\fC(G):=\gr^{\widetilde{\Gamma}}(G'/G'')$ can 
also be viewed as a graded module over $S$, by setting 
\begin{equation}
\label{eq:mc grading}
\fC(G)_{k} = \gr^{\widetilde{\Gamma}}_{k+2}(G'/G'').
\end{equation}

\begin{prop}
\label{prop:gamma compare}
The canonical morphism of graded Lie algebras  
$\Phi\colon \gr(G)/\gr(G)'' \surj \gr(G/G'')$  from \eqref{eq:surjChen}
induces an epimorphism of $S$-modules, 
\begin{equation*}
\xymatrix{ \varphi\colon \fB(\gr(G)) \ar@{->>}[r] &\fC(G)}.  
\end{equation*}
Moreover, if $G$ is filtered-formal, then $\varphi$ is an isomorphism. 
\end{prop}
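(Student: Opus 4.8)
The plan is to follow the same strategy as in the proof of Proposition \ref{prop:alexinv compare}, with one extra identification required at the target. The map $\Phi\colon \gr(G)/\gr(G)'' \surj \gr(G/G'')$ furnished by Theorem \ref{thm:sw1} is a surjective morphism of \emph{graded} Lie algebras, and every such morphism carries the derived subalgebra of its source onto the derived subalgebra of its (here, full) image. Since the source $\gr(G)/\gr(G)''$ is metabelian by construction, its derived subalgebra is $\gr(G)'/\gr(G)''=\fB(\gr(G))$. Thus $\Phi$ restricts to an epimorphism
\[
\varphi\colon \fB(\gr(G)) = \gr(G)'/\gr(G)'' \longrightarrow \gr(G/G'')',
\]
and it remains only to identify the target with $\fC(G)$.

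For this identification—the heart of the argument—I would use the standard fact that the associated graded Lie algebra of any group is generated in degree $1$. Applied to $H:=G/G''$, this gives $\gr(H)' = \bigoplus_{k\geq 2}\gr_k(H)$. On the other hand, relation \eqref{eq: TildeFiltration} yields $\gr^{\widetilde{\Gamma}}_k(G'/G'') = \gr_k(G/G'')$ for all $k\geq 2$. Comparing the grading conventions \eqref{eq:fb grading} and \eqref{eq:mc grading}, both of which shift degrees by $2$, I conclude that $\fC(G)_k = \gr_{k+2}(G/G'') = (\gr(G/G'')')_{k+2}$, so that $\fC(G)$ is identified with $\gr(G/G'')'$ as a graded object. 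This exhibits $\varphi$ as the asserted surjection $\fB(\gr(G)) \surj \fC(G)$.

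Next I would verify $S$-linearity. The $S$-module structures on $\fB(\gr(G))$ and on $\fC(G)$ both arise from the adjoint action of the common degree-$1$ piece $H\otimes \C$, and $\Phi$ is the identity in degree $1$; since $\Phi$ is a morphism of Lie algebras it intertwines these two adjoint actions, so $\varphi$ is a morphism of graded $S$-modules. Finally, if $G$ is filtered-formal, then Theorem \ref{thm:sw1} asserts that $\Phi$ itself is an isomorphism, and the restriction of a Lie algebra isomorphism to derived subalgebras is again an isomorphism; hence $\varphi$ is an isomorphism.

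The main obstacle is the middle step: one must carefully reconcile the two grading shifts in \eqref{eq:fb grading} and \eqref{eq:mc grading} and invoke both the degree-$1$ generation of $\gr(G/G'')$ and the identity \eqref{eq: TildeFiltration} in order to recognize the target of $\varphi$ as $\fC(G)$. Once this bookkeeping is in place, surjectivity, $S$-linearity, and the filtered-formal case all follow formally from the corresponding properties of $\Phi$, exactly as in Proposition \ref{prop:alexinv compare}.
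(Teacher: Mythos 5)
Your proof is correct and follows essentially the same route as the paper's: where the paper obtains the surjectivity of $\varphi$ by applying the Five Lemma to the ladder of exact sequences \eqref{eq:ladder}, you restrict $\Phi$ directly to derived subalgebras and then identify $\gr(G/G'')'$ with $\fC(G)$ using degree-$1$ generation of $\gr(G/G'')$ together with \eqref{eq: TildeFiltration} --- two equivalent packagings of the same diagram chase, since the bottom row of \eqref{eq:ladder} encodes exactly that identification. Your treatment of $S$-linearity via the compatible adjoint actions of $H\otimes\C$ and of the filtered-formal case via Theorem \ref{thm:sw1} coincides with the paper's.
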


\begin{proof}
The map $\Phi$ fits into the following commutative diagram 
of graded Lie algebras, 
\begin{equation}
\label{eq:ladder}
\xymatrix{
0\ar[r]& \gr(G)'/\gr(G)''\ar@{.>>}[d]^{\varphi} \ar[r] &\gr(G)/\gr(G)'' 
\ar@{->>}[d]^{\Phi}\ar[r] & \gr(G)/\gr(G)' \ar[d]^{\id}\ar[r] &0\phantom{,}\\
0\ar[r]& \gr^{\widetilde{\Gamma}}(G'/G'')\ar[r]^{\gr(j)} &\gr(G/G'') \ar[r] 
& \gr(G/G') \ar[r] &0.
}
\end{equation}

Thus, $\Phi$ induces a morphism of graded Lie algebras, 
$\varphi\colon \gr(G)'/\gr(G)''\to \gr^{\widetilde{\Gamma}}(G'/G'')$, 
as indicated above. By the Five Lemma, $\varphi$ is surjective.  
Observe that $\gr(G)/\gr(G)'\cong \gr(G/G')\cong H\otimes \C$ acts on 
both the source and target of $\varphi$ by adjoint representations. 
Hence, upon regrading according to \eqref{eq:fb grading} 
and \eqref{eq:mc grading}, the map $\varphi\colon \fB(\gr(G)) \surj \fC(G)$ 
becomes a morphism of $S$-modules.  

If $G$ is filtered-formal, then, according to Theorem \ref{thm:sw1}, 
the map $\Phi$ is an isomorphism of graded Lie algebras.
Hence, the induced map $\varphi$ is an isomorphism of $S$-modules.
\end{proof}

\subsection{Comparison with the associated graded Alexander invariant}
\label{subsec:assoc gr}

Finally, we identify the associated graded Alexander invariant $\gr(B(G))$ 
with the $S$-module $\fC(G)$ defined above. To do that, we first identify 
the respective ground rings.  

Choose a basis $\{x_1,\dots, x_n\}$ for the torsion-free part of $H=G_{\ab}$.  
We may then identify the group algebra $\gr(\C[H])$ with the polynomial algebra 
$R=\C[s_1,\dots,s_n]$, where $s_i$ corresponds to $\overline{x_i-1}\in I/I^2$, 
see Quillen \cite{Quillen68}.  On the other 
hand, we may also identify the symmetric algebra $S=\Sym(H\otimes \C)$ with the 
polynomial algebra $\C[x_1,\dots, x_n]$.   The desired ring isomorphism, 
$R\cong S$, is gotten by sending $s_i$ to $x_i$.

\begin{prop}
\label{thm:gradedinf} 
Under the above identification $R\cong S$, the graded $R$-module $\gr(B(G))$ 
is canonically isomorphic to the graded $S$-module $\fC(G)$.
\end{prop}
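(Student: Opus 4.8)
The plan is to realize the claimed isomorphism as the associated graded of the inclusion $j\colon G'/G''\inj G/G''$, and then to check that this graded map intertwines the two module structures under the ring identification $R\cong S$.

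First I would assemble the underlying graded vector-space isomorphism out of Massey's results. By \eqref{eq:Masseyiso} together with \eqref{eq: TildeFiltration}, the inclusion $j$ carries the $I$-adic filtration $\{I^kB(G)\}$ on $B(G)=G'/G''$ onto the induced filtration, with $j(I^kB)=\Gamma_{k+2}(G/G'')=\widetilde{\Gamma}_{k+2}(G'/G'')$ for all $k\ge 0$. Passing to successive quotients and tensoring with $\C$ yields the isomorphisms $\gr_k(j)\colon \gr_k(B(G))\isom \gr^{\widetilde{\Gamma}}_{k+2}(G'/G'')=\fC(G)_k$ of \eqref{eq:MasseyAlex}. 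Assembling these gives a graded $\C$-linear isomorphism $\Theta\colon \gr(B(G))\isom \fC(G)$. Since the ring identification $R\cong S$ sending $s_i\mapsto x_i$ is already fixed, the only remaining task is to verify that $\Theta$ is a morphism of graded modules.

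Second, I would compare the two actions on degree-$1$ generators, which suffices because $R$ and $S$ are generated in degree $1$. On $\gr(B(G))$ the generator $s_i=\overline{x_i-1}\in R_1=I/I^2$ acts by multiplication. Because the $\C[H]$-module structure on $B(G)=G'/G''$ is the conjugation action, for a class $\bar{y}\in I^kB$ represented by $y\in G'$ the element $(x_i-1)\cdot\bar{y}$ equals $\overline{\tilde{x}_iy\tilde{x}_i^{-1}}-\bar{y}$ in the abelian group $G'/G''$; written multiplicatively, this difference is the class of the group commutator $[\tilde{x}_i,y]=\tilde{x}_iy\tilde{x}_i^{-1}y^{-1}$, where $\tilde{x}_i\in G$ is any lift of $x_i$. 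On the other side, $x_i\in S_1=H\otimes\C=\gr_1(G/G'')$ acts on $\fC(G)$ by the adjoint representation, $x_i\cdot\bar{y}=[x_i,\bar{y}]$, the Lie bracket of $\gr(G/G'')$. But that bracket is by construction induced by the group commutator, so $[x_i,\bar{y}]$ is exactly the class of $[\tilde{x}_i,y]$ in $\gr_{k+3}(G/G'')=\fC(G)_{k+1}$. Hence, under $\Theta$, multiplication by $s_i$ corresponds precisely to the adjoint action of $x_i$, and $\Theta$ is an isomorphism of graded $S$-modules.

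The main obstacle is exactly this last compatibility, namely translating the ring-theoretic operation ``multiply by $x_i-1$'' on the $I$-adically filtered Alexander module into the group commutator $[\tilde{x}_i,y]$, and then recognizing the latter, after passing to the associated graded, as the Lie bracket $[x_i,-]$ defining the adjoint $S$-action on $\fC(G)$. The care required lies in the bookkeeping: one must check that $[\tilde{x}_i,y]$ is well defined on associated-graded classes, independent of the lift $\tilde{x}_i$ and of the representative $y$ modulo deeper filtration; that the additive module structure on $G'/G''$ correctly converts the difference $(x_i-1)\cdot\bar{y}$ into a commutator; and that the grading shifts match, an increase of one in the $R$-grading on $\gr(B(G))$ corresponding to an increase of one in the $S$-grading on $\fC(G)$, i.e.\ of two in the $\widetilde{\Gamma}$-filtration weight.
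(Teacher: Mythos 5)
Your proposal is correct and follows essentially the same route as the paper's proof: the graded linear isomorphism is assembled from Massey's isomorphisms \eqref{eq:Masseyiso} together with the identification \eqref{eq: TildeFiltration}, and the module structures are matched by the computation $(\overline{x_i-1})\cdot\overline{z}=\overline{x_izx_i^{-1}-z}=\overline{[x_i,z]}$, which identifies multiplication by $s_i$ with the adjoint action of $x_i$. The extra bookkeeping you flag (independence of lifts and representatives, matching of grading shifts) is exactly what the paper's regrading conventions \eqref{eq:fb grading} and \eqref{eq:mc grading} take care of.
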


\begin{proof}
Recall from \S\ref{subsec:alex inv} that the inclusion map 
$j\colon G'/G''\to G/G''$ restricts to an 
isomorphism $I^k B(G)\to \Gamma_{k+2}(G/G'')$ for each $k\geq 0$. 
Using the induced filtration $\widetilde{\Gamma}$ from 
\eqref{eq:inducedfil} and the identification \eqref{eq: TildeFiltration},
we obtain $\C$-linear isomorphisms 
$ I^k B(G) \cong \widetilde{\Gamma}_{k+2}(G'/G'')$, for all $k\ge 0$. 
Taking the successive quotients of the respective filtrations 
and regrading according to  \eqref{eq:mc grading}, 
we obtain a $\C$-linear  isomorphism $\gr(B(G))\cong \fC(G)$.

Under the identification $\gr(\C[H])\cong R$, the associated graded 
Alexander invariant of $G$ may be viewed as a graded $R$-module, 
with $R$-action defined by 
\begin{equation}
\label{eq:R action}
s_i(\overline{z})=(\overline{x_i-1}) \overline{z}=\overline{x_izx_i^{-1}-z}=
\overline{[x_i,z]z-z}=\overline{[x_i,z]}+\overline{z}-\overline{z}=\overline{[x_i,z]},
\end{equation}
for all $z\in G'$.
(In this computation, we follow the convention from \cite{Massey80}, 
and view the Alexander invariant 
$B(G)=G'/G''$ as an additive group; however, when we consider
the induced filtration  $\widetilde{\Gamma}$ on 
$G'/G''$, we view it as a multiplicative subgroup of $G/G''$.)

Finally, recall that $\fC_{\bullet}(G)=\gr^{\widetilde{\Gamma}}_{\bullet -2}(G'/G'')$ 
is an $S$-module, with 
 $S$-action given by $x_i(\overline{z})=\overline{[x_i,z]}$.
Hence, the aforementioned isomorphism $R\cong  S$ identifies the 
$R$-module $\gr(B(G))$ with the $S$-module $\fC(G)$.
\end{proof}

\subsection{Discussion}
\label{subsec:discuss}
In the $1$-formal case, we obtain the following corollary, which can also 
be deduced from \cite[Theorem 5.6]{DPS}.

\begin{corollary}
\label{cor:dps iso}
Let $G$ be a $1$-formal group.  
Then $\gr(B(G))\cong \fB(G)$, as modules 
over the polynomial ring $S=\gr(\C[H])$.  
\end{corollary}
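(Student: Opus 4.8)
The plan is to assemble the three comparison results of this section into a single chain of isomorphisms, using the fact recorded in \S\ref{subsec:malcev} that a finitely generated group is $1$-formal precisely when it is both graded-formal and filtered-formal. Thus, for a $1$-formal group $G$, both hypotheses needed to upgrade the canonical surjections of this section into isomorphisms are available simultaneously, and the corollary becomes a matter of composing maps already constructed.

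Concretely, I would proceed in three steps. First, since $G$ is graded-formal, Proposition \ref{prop:alexinv compare} shows that the comparison map $\psi\colon \fB(\fh(G)) \surj \fB(\gr(G))$, induced by the holonomy surjection $\Psi\colon \fh(G)\surj \gr(G)$, is an isomorphism of graded $S$-modules. Second, since $G$ is filtered-formal, Proposition \ref{prop:gamma compare} shows that the map $\varphi\colon \fB(\gr(G))\surj \fC(G)$, induced by $\Phi$, is likewise an isomorphism of $S$-modules. Third, Proposition \ref{thm:gradedinf} provides, unconditionally, a canonical identification $\gr(B(G))\cong \fC(G)$ of graded modules over $S\cong \gr(\C[H])$. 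Composing these three maps yields
\[
\fB(G)=\fB(\fh(G)) \isom \fB(\gr(G)) \isom \fC(G) \cong \gr(B(G)),
\]
which is the desired isomorphism; this is exactly the assertion that both arrows in \eqref{eq:surj bees} of Theorem \ref{thm:compare} are isomorphisms in the $1$-formal case.

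I do not anticipate a genuine obstacle here, since all the substantive content has already been established in Propositions \ref{prop:alexinv compare}, \ref{prop:gamma compare}, and \ref{thm:gradedinf}. The only point requiring care is to confirm that the composite respects the module structure: each of $\psi$, $\varphi$, and the identification $\gr(B(G))\cong \fC(G)$ is a morphism of graded $S$-modules (the $S$-action being in every case induced by the adjoint action of $\gr_1\cong H\otimes\C$, matched under $R\cong S$ with multiplication by the $s_i$), so their composite is automatically an $S$-module isomorphism. One should also observe that the grading conventions \eqref{eq:fb grading} and \eqref{eq:mc grading} agree throughout the chain, so no degree shift is introduced, and that the result is consistent with the alternative derivation from \cite[Theorem 5.6]{DPS}.
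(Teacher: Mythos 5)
Your proposal is correct and takes essentially the same route as the paper: the corollary is deduced by combining Propositions \ref{prop:alexinv compare}, \ref{prop:gamma compare}, and \ref{thm:gradedinf}, using that $1$-formality is the conjunction of graded-formality and filtered-formality. The paper's proof is a one-line citation of these same three propositions, so your explicit chain of isomorphisms and the check of $S$-module compatibility is simply a more detailed writeup of the identical argument.
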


\begin{proof}
Follows at once from Propositions \ref{prop:alexinv compare}, 
\ref{prop:gamma compare}, and \ref{thm:gradedinf}.
\end{proof}

Using those propositions 
once again, we obtain another corollary. 

\begin{corollary}
\label{cor:thetatilde}
Let $G$ be a finitely generated group.  The following then hold. 
\begin{enumerate}
\item \label{th1}
$\theta_k(\gr(G))\leq \theta_k(\fh(G))$, with equality if $k\le  2$, or if
$G$ is graded-formal.
\item  \label{th2}
$\theta_k(G)\leq \theta_k(\gr(G))$, 
with equality if $k\le  3$, or if $G$ is filtered-formal.
\end{enumerate}
\end{corollary}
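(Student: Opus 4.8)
The plan is to read off both chains of inequalities directly from the comparison maps already constructed in Propositions \ref{prop:alexinv compare}, \ref{prop:gamma compare}, and \ref{thm:gradedinf}, using the two Massey-type formulas \eqref{eq:MasseyChen} and \eqref{eq:MC Lie alg} to pass between graded $S$-modules and Chen ranks. Throughout I will use that a surjection of graded $S$-modules is surjective in each degree, and hence cannot increase graded dimensions.

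First I would establish part \eqref{th1}. Proposition \ref{prop:alexinv compare} supplies a surjection $\psi\colon \fB(\fh(G)) \surj \fB(\gr(G))$ of graded $S$-modules, so $\dim \fB(\fh(G))_j \geq \dim \fB(\gr(G))_j$ for all $j\geq 0$. Since both $\fh(G)$ and $\gr(G)$ are generated in degree $1$, formula \eqref{eq:MC Lie alg} gives $\theta_k(\fh(G)) = \dim \fB(\fh(G))_{k-2}$ and $\theta_k(\gr(G)) = \dim \fB(\gr(G))_{k-2}$ for $k\geq 2$, whence $\theta_k(\gr(G)) \leq \theta_k(\fh(G))$ for all such $k$. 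The unconditional equality for $k\leq 2$ I would argue separately: for any Lie algebra $\fg$ generated in degree $1$, the derived subalgebra $\fg'$ lives in degrees $\geq 2$ and $\fg''$ in degrees $\geq 4$, so $\theta_k(\fg) = \dim \fg_k$ for $k\leq 3$; combined with the fact that $\Psi$ from \eqref{eq:holonomysurj} is an isomorphism in degrees $1$ and $2$, this yields $\theta_k(\fh(G)) = \theta_k(\gr(G))$ for $k\leq 2$. If $G$ is graded-formal, then $\psi$ is an isomorphism by Proposition \ref{prop:alexinv compare}, giving equality in every degree.

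Part \eqref{th2} I would prove in the same spirit, now combining Propositions \ref{prop:gamma compare} and \ref{thm:gradedinf}: composing the surjection $\varphi\colon \fB(\gr(G)) \surj \fC(G)$ with the identification $\fC(G) \cong \gr(B(G))$ produces a degreewise surjection $\fB(\gr(G)) \surj \gr(B(G))$. Matching the infinitesimal formula \eqref{eq:MC Lie alg} for $\gr(G)$ against Massey's formula \eqref{eq:MasseyChen} for $G$ then gives $\theta_k(G) = \dim \gr(B(G))_{k-2} \leq \dim \fB(\gr(G))_{k-2} = \theta_k(\gr(G))$ for $k\geq 2$. For the equality range $k\leq 3$ I would invoke the observation from \S\ref{subsec:ChenLie} that $\gr_k(\pi)$ is an isomorphism for $k\leq 3$, so that $\theta_k(G) = \dim\gr_k(G) = \theta_k(\gr(G))$ there; and filtered-formality of $G$ makes $\varphi$ an isomorphism by Proposition \ref{prop:gamma compare}, forcing equality throughout.

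Because every ingredient is already available, the argument amounts to bookkeeping rather than new input, and I do not expect a genuine obstacle. The only point requiring care is keeping the grading shift $\fB(\fg)_k = (\fg'/\fg'')_{k+2}$ uniform across the three modules and correctly matching the two equality regimes---the unconditional small-degree equalities coming from the stabilization of the derived series, and the all-degree equalities coming from the respective formality hypotheses.
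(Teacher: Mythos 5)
Your proposal is correct and follows exactly the route the paper intends: the paper's own proof consists of the single remark that the corollary follows from Propositions \ref{prop:alexinv compare}, \ref{prop:gamma compare}, and \ref{thm:gradedinf}, and your argument is a careful unpacking of that, with the degree bookkeeping via \eqref{eq:MC Lie alg} and \eqref{eq:MasseyChen} and the small-degree equality cases handled correctly.
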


The graded-formality assumption from part \eqref{th1} of the above corollary 
is clearly necessary for the equality $\theta_k(\gr(G))= \theta_k(\fh(G))$ 
to hold for all $k$.  On the other hand, it is not clear whether the 
filtered-formality hypothesis from part \eqref{th2} is necessary 
for the equality $\theta_k(G)= \theta_k(\gr(G))$ to hold in general.
In view of several computations (some of which are summarized 
in the next section), we are led to formulate the following question. 

\begin{question}
\label{que:eq}
Suppose $G$ is a graded-formal group.  
Does the equality $\theta_k(G)= \theta_k(\gr(G))$  hold for all $k$?
\end{question}

\section{Chen ranks and resonance}
\label{sec:Chen}

In this section, we detect the relationship between the Chen ranks and the resonance
varieties. We also compute the Chen ranks of some (upper) pure virtual braid groups.

\subsection{The Chen ranks formula}
\label{subsec:Chenranks}
Let $G$ be a finitely presented, commutator-relators group.  
As shown in \cite{Matei-Suciu00}, for each $d\ge 1$, the 
resonance variety $\cR_d^1(G)$  coincides, at least away 
from the origin $0\in H^1(G;\C)$, 
with the support variety of the annihilator of $d$-th exterior power of 
the infinitesimal Alexander invariant;  that is,
\begin{equation}
\label{eq:supp bg}
\cR_d^1(G)= V\bigg(\Ann\Big(\bigwedge^d\fB(\fh(G))\Big)\bigg).
\end{equation}

The first author conjectured in \cite{Suciu01} that  
for $k \gg 0$, the Chen ranks of an arrangement group $G$ are given by
the \emph{Chen ranks formula}
\begin{equation}
\label{eq:Chenranksformula}
\theta_k(G)= \sum_{m\geq 2}h_m(G)\cdot \theta_k(F_m) , 
\end{equation}
where $h_m(G)$ is the number of $m$-dimensional irreducible components 
of $\cR_1^1(G)$.  A positive answer to this conjecture was given in 
\cite{Cohen-Schenck15} for a class of
$1$-formal groups which includes arrangement groups. 
To state this result, recall that a subspace $U\subset H^1(G;\C)$ is called 
{\em isotropic}\/ if the cup product $U\wedge U\to H^2(G;\C)$ is the zero map.

\begin{theorem}[\cite{Cohen-Schenck15}]
\label{thm:Chenrankformula}
Let $G$ be a finitely presented, commutator-relators $1$-formal group.
Assume that the components of $\cR_1^1(G)$ are isotropic, 
projectively disjoint, and reduced as a scheme. Then the 
Chen ranks formula holds for $G$.
\end{theorem}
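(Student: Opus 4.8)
The plan is to reduce the statement to a computation of the Hilbert function of the infinitesimal Alexander invariant in high degrees, and then to use the geometry of the resonance variety to evaluate that Hilbert function component by component. First I would invoke the $1$-formality hypothesis: by Corollary \ref{cor:dps iso} there is an isomorphism of graded $S$-modules $\gr(B(G))\cong \fB(\fh(G))$, where $S=\Sym(H\otimes\C)$. Combining Massey's formula \eqref{eq:MasseyChen} with Proposition \ref{prop:MC Lie alg} then gives $\theta_{k+2}(G)=\dim \fB(\fh(G))_k$ for all $k\ge 0$, and similarly $\theta_{k+2}(F_m)=\dim\fB(\fh(F_m))_k$ since each $F_m$ is $1$-formal. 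Thus the Chen ranks formula \eqref{eq:Chenranksformula} is equivalent to the assertion that, for $k\gg 0$,
\[
\dim \fB(\fh(G))_k=\sum_{m\ge 2}h_m(G)\cdot \dim\fB(\fh(F_m))_k ,
\]
so that everything becomes a question about the Hilbert series of the single graded module $\fB:=\fB(\fh(G))$.

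Next I would bring in the geometry of the support. Since $G$ is commutator-relators, \eqref{eq:supp bg} (with $d=1$) identifies $\cR_1^1(G)$ with the support of $\Ann\fB$; and since $G$ is $1$-formal, the Tangent Cone Theorem (Theorem \ref{thm:tangentcone}) guarantees that the irreducible components $L_1,\dots,L_r$ of $\cR_1^1(G)$ are linear subspaces of $H^1(G;\C)$. Write $m_i=\dim L_i$, so that $h_m(G)=\#\{i : m_i=m\}$. The projective-disjointness hypothesis says $L_i\cap L_j=\{0\}$ for $i\ne j$; the reducedness hypothesis says the scheme $\cR_1^1(G)$ carries no embedded or multiple components; and the isotropy hypothesis says the cup product vanishes on each $L_i$. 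The strategy is to show that, in high degrees, the Hilbert function of $\fB$ is the sum of the Hilbert functions of the restrictions of $\fB$ along the components $L_i$.

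The heart of the argument is a local-to-global analysis. For each component I would localize $\fB$ at the generic point $\eta_i$ of $L_i$, a prime of height $n-m_i$ in $S=\C[x_1,\dots,x_n]$, where $n=\dim H^1(G;\C)$. The isotropy of $L_i$ is precisely what forces the localized module $\fB_{\eta_i}$ to agree with the corresponding localization of the infinitesimal Alexander invariant of the free group $F_{m_i}$: along the directions tangent to $L_i$ the quadratic cup-product relations defining $\fh(G)$ degenerate, so to first order the holonomy Lie algebra behaves like that of a free group of rank $\dim L_i$, and its infinitesimal Alexander invariant inherits the free-group model $\fB(\fh(F_{m_i}))$. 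Using that the $L_i$ are projectively disjoint and that $\cR_1^1(G)$ is reduced, the primary decomposition of $\fB$ separates cleanly: the top-dimensional primary components are indexed exactly by the $L_i$, each with multiplicity one, and every remaining associated prime is supported at $\{0\}$ and hence contributes only in bounded degree. Passing to Hilbert functions, these contributions add, and for $k\gg 0$ one recovers $\dim\fB_k=\sum_i\dim\fB(\fh(F_{m_i}))_k$, which is the required identity after grouping the $L_i$ by dimension.

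The main obstacle is the local matching step together with the asymptotic additivity of Hilbert functions. Making precise the claim that $\fB_{\eta_i}$ is governed by the free-group model requires a careful comparison of the linearized Alexander presentation of $G$ along $L_i$ with that of $F_{m_i}$, and it is here that the isotropy hypothesis must be used in full strength, in order to annihilate the quadratic cup-product terms that would otherwise obstruct the identification. The second delicate point is to show that the off-diagonal contributions (from pairwise intersections) and the origin-supported contributions to $\dim\fB_k$ are negligible for $k\gg 0$; this is where projective disjointness (preventing distinct components from interacting in high degree) and reducedness (preventing multiplicities from inflating the counts $h_m(G)$) are both essential. Controlling these error terms, rather than the main term, is what governs the threshold $k\gg 0$ in the statement.
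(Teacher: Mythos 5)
First, a point of order: the paper does not prove this statement. Theorem \ref{thm:Chenrankformula} is imported verbatim from Cohen and Schenck \cite{Cohen-Schenck15}, so there is no internal proof to compare yours against. Judged as a reconstruction of the Cohen--Schenck argument, your outline has the right architecture: reduce to the Hilbert function of $\fB=\fB(\fh(G))$ via $1$-formality and Corollary \ref{cor:dps iso}, use \eqref{eq:supp bg} and Theorem \ref{thm:tangentcone} to identify the support of $\fB$ with a union of linear subspaces $L_1,\dots,L_r$, and show that in high degrees $\fB$ agrees, up to finite-length error, with $\bigoplus_i \fB(\fh(F_{m_i}))$.

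However, the two steps you yourself flag as ``the heart of the argument'' are precisely the ones you do not carry out, and as written they are assertions rather than proofs. (a) The local matching: the actual mechanism is that isotropy of $L_i$ makes the composite $L_i\wedge L_i\to H^1\wedge H^1\to H^2$ vanish, so the dual surjection $H^1(G;\C)^*\surj L_i^*$ kills $\im \partial_A$ and induces a surjection of graded Lie algebras $\fh(G)\surj \bL(L_i^*)\cong \fh(F_{m_i})$, hence a surjection of $S$-modules $\fB\surj \fB(\fh(F_{m_i}))$, the target viewed as an $S$-module via $S\surj \Sym(L_i^*)$ and supported exactly on $L_i$. Your phrase ``to first order the holonomy Lie algebra behaves like that of a free group'' gestures at this but constructs no map; without the comparison map there is nothing to localize or to match. (b) The finite-length claim: that the resulting map $\fB\to\bigoplus_i\fB(\fh(F_{m_i}))$ has kernel and cokernel supported at the origin is the main theorem of \cite{Cohen-Schenck15}, not a formal consequence of ``the primary decomposition separating cleanly.'' Reducedness and projective disjointness do not by themselves exclude positive-dimensional embedded primes of $\fB$, nor do they give surjectivity of the comparison map in high degrees; establishing this requires the detailed analysis of the annihilator/Fitting scheme structure of the linearized Alexander invariant that occupies most of the Cohen--Schenck paper. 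So your proposal is a faithful road map of the known proof, but the difficult content is presupposed rather than supplied; note also that Example \ref{ex:chen ranks coprod} and Proposition \ref{prop:chenvp3+} in this paper show the conclusion genuinely fails once isotropy is dropped, so any argument must use that hypothesis in an essential, verifiable way.
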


Using Hilbert series of the Alexander invariants, the Chen ranks 
formula \eqref{eq:Chenranksformula} translates into the equivalent 
statement that
\begin{equation}
\label{eq:ChenHilb}
\Hilb(\gr(B(G)),t)-\sum_{m\geq 2}h_m(G)\cdot \Hilb(\gr(B(F_m)),t)
\end{equation}
is a polynomial (in the variable $t$). 

\begin{example}
\label{eq:chenpn res}
The pure braid group $P_n$ is an arrangement group, and thus satisfies 
the hypothesis of Theorem \ref{thm:Chenrankformula}.  In fact, 
we know from Proposition \ref{prop:res pn} that the resonance variety 
$\cR^1_1(P_n)$ has $\binom{n}{3}+\binom{n}{4}=\binom{n+1}{4}$ irreducible 
components, all of dimension $2$.  Thus, the computation from \eqref{eq:chenpn}
agrees with the one predicted by formula \eqref{eq:Chenranksformula}, 
for all $k\ge 3$.
\end{example}

As noted in \cite{Cohen-Schenck15}, it is easy to find examples of 
non-$1$-formal groups for which the Chen ranks formula does not 
hold.  For instance, if $G=F_2/\Gamma_3(F_2)$ is the Heisenberg group, then 
$\cR^1_1(G)=H^1(G,\C)=\C^2$, and thus formula \eqref{eq:Chenranksformula} 
would predict in this case that $\theta_k(G)=\theta_k(F_2)$ for $k$ large enough, 
where in reality $\theta_k(G)=0$ for $k\ge 3$.  On the other hand, here is an 
example of a finitely presented, commutator-relators group which satisfies 
the Chen ranks formula, yet which is {\em not}\/ $1$-formal. 

\begin{example}
\label{eq:31425}
Using the notation from \cite{Matei-Suciu00}, let $\cA = \cA(31425)$ 
be the `horizontal' arrangement of $2$-planes in $\R^4$ determined 
by the specified permutation, and let $G$ be the fundamental group 
of its  complement.  From \cite[Example 6.5]{Matei-Suciu00}, we 
know that $\cR_1^1(G)$ is an irreducible cubic hypersurface in 
$H^1(G,\C)=\C^5$.   Hence,  by Theorem \ref{thm:tangentcone}, 
the group $G$ is not $1$-formal (for an alternative argument,  
see \cite[Example 8.2]{DPS}). On the other hand, the singularity 
link determined by $\cA$ has all linking numbers $\pm 1$, and 
thus satisfies the Murasugi Conjecture, that is, $\theta_k(G)=\theta_k(F_4)$, 
for all $k\ge 2$, see \cite{Massey-Traldi, Papadima-Suciu04}.  
Therefore, the Chen ranks formula holds for the group $G$.
\end{example}

\subsection{Products and coproducts}
\label{subsec:prcopr}

We now analyze the way the Chen ranks formula \eqref{eq:Chenranksformula} 
behaves under (finite) products and coproducts of groups.

\begin{lemma}
\label{lem:productcom}
Let $G_1$ and $G_2$ be two finitely generated groups. 
The number of $m$-dimen\-sional irreducible components of the corresponding 
first resonance varieties satisfies the following additivity formula,
\begin{equation}
\label{eq:cm g1g2}
h_m(G_1\times G_2)=h_{m}(G_1)+h_{m}(G_2).
\end{equation}

\end{lemma}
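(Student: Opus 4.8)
The plan is to prove the additivity formula \eqref{eq:cm g1g2} by combining the product formula for resonance varieties established in Proposition \ref{prop:resProd} with an analysis of how irreducible components behave under the particular form that product takes. First I would recall from Proposition \ref{prop:resProd} that the degree $1$, depth $1$ resonance variety of the product decomposes as
\[
\cR^1_1(G_1\times G_2)=\big(\cR^1_1(G_1)\times \{0\}\big)\cup \big(\{0\}\times \cR^1_1(G_2)\big),
\]
sitting inside $H^1(G_1\times G_2,\C)=H^1(G_1,\C)\oplus H^1(G_2,\C)$. The key structural point is that this is a union of two pieces living in \emph{complementary} coordinate subspaces, meeting only at the origin $0$.

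The main step is to show that this decomposition is compatible with the decomposition into irreducible components, in the sense that the irreducible components of $\cR^1_1(G_1\times G_2)$ of \emph{positive} dimension are exactly the images of the positive-dimensional irreducible components of $\cR^1_1(G_1)$ under the inclusion $V\mapsto V\times\{0\}$, together with those of $\cR^1_1(G_2)$ under $W\mapsto \{0\}\times W$. For this I would argue as follows: the subspace $H^1(G_1,\C)\times\{0\}$ is a linear subvariety, and $\cR^1_1(G_1)\times\{0\}$ is precisely $\cR^1_1(G_1\times G_2)\cap \big(H^1(G_1,\C)\times\{0\}\big)$; an analogous statement holds on the other factor. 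Since each $\cR^1_1(G_i)$ contains the origin whenever it is nonempty (as $0\in\cR^1_d(A)$ exactly when $d\le b_1$), the two closed subvarieties $\cR^1_1(G_1)\times\{0\}$ and $\{0\}\times\cR^1_1(G_2)$ intersect only in $\{0\}$, which is at most $0$-dimensional. An irreducible component of the union is then either a component of $\cR^1_1(G_1)\times\{0\}$ not contained in the other piece, or symmetrically a component of $\{0\}\times\cR^1_1(G_2)$, unless it is the isolated point $\{0\}$ itself. Consequently, for each fixed $m\ge 1$, the $m$-dimensional components of $\cR^1_1(G_1\times G_2)$ are in bijection with the disjoint union of the $m$-dimensional components of $\cR^1_1(G_1)$ and those of $\cR^1_1(G_2)$, giving $h_m(G_1\times G_2)=h_m(G_1)+h_m(G_2)$.

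I expect the main obstacle to be the bookkeeping around components whose dimension might interact with the origin, and more precisely verifying that no positive-dimensional component of one factor can accidentally coincide with or be absorbed into a component coming from the other factor. This is handled by the observation above that the two pieces meet only at $0$: a positive-dimensional irreducible subvariety of $\cR^1_1(G_1)\times\{0\}$ cannot be contained in $\{0\}\times \cR^1_1(G_2)$, since the latter meets $H^1(G_1,\C)\times\{0\}$ only in the origin. One should also note that the formula \eqref{eq:cm g1g2} is stated for $m$-dimensional components, and the convention $h_m$ counts only the positive-dimensional (irreducible) components, so the possible isolated point $\{0\}$, which is common to both factors, does not contribute and does not cause double-counting. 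With that caveat dispatched, the bijection of components of each fixed positive dimension is immediate, and the additivity follows.
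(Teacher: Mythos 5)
Your proposal is correct and follows essentially the same route as the paper: apply Proposition~\ref{prop:resProd} to write $\cR^1_1(G_1\times G_2)$ as the union of $\cR^1_1(G_1)\times\{0\}$ and $\{0\}\times\cR^1_1(G_2)$, observe that these two closed pieces meet only at the origin, and conclude that the irreducible components of the union are exactly the components contributed by each factor. Your extra care about the possible isolated component $\{0\}$ (i.e.\ the case $m=0$, which the paper's proof glosses over and which is irrelevant for the Chen ranks application where $m\ge 2$) is a reasonable refinement but not a different argument.
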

\begin{proof}
We start by identifying the affine space $H^1(G_1\times G_2;\C)$ with 
$H^1(G_1;\C)\times H^1(G_2;\C)$.
Next, by Proposition \ref{prop:resProd}, we have that 
\begin{equation}
\label{eq:r1g1g2}
\cR^1_1(G_1\times G_2) = \cR^1_1(G_1)\times \{0\} 
\cup \{0\}\times \cR^1_1(G_2).
\end{equation}

Suppose $\cR^1_1(G_1)=\bigcup_{i=1}^s A_i$ and 
$\cR^1_1(G_2)= \bigcup_{j=1}^t B_j$ are the decompositions 
into irreducible components for the respective varieties.
Then $A_i\times \{0\}$ and $\{0\}\times B_j$ are irreducible 
subvarieties of $\cR^1_1(G_1\times G_2)$.  Observe now that 
$\cR^1_1(G_1)\times \{0\}$ and  $\{0\}\times \cR^1_1(G_2)$
intersect only at $0$. It follows that 
\begin{equation}
\label{eq:comps r1g1g2}
\cR^1_1(G_1\times G_2) = \bigcup_{i=1}^s A_i\times \{0\}
 \cup  \bigcup_{j=1}^t \{0\}\times B_j
\end{equation}
is the irreducible decomposition for the first resonance variety 
of $G_1\times G_2$.  The claimed additivity formula follows.
\end{proof}

\begin{corollary}
\label{cor:productChenformula}
If both $G_1$ and $G_2$ satisfy the Chen ranks formula,
then $G_1\times G_2$ also satisfies the Chen ranks formula.
\end{corollary}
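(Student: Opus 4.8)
The plan is to reduce the statement to the two additivity results just established, since the Chen ranks formula is an assertion about the asymptotic behavior of $\theta_k$. First I would record what the hypothesis provides: because $G_1$ and $G_2$ each satisfy \eqref{eq:Chenranksformula}, there are thresholds $k_1,k_2$ with $\theta_k(G_i)=\sum_{m\ge 2}h_m(G_i)\,\theta_k(F_m)$ for all $k\ge k_i$. Setting $k_0=\max\{k_1,k_2\}$ makes both identities available simultaneously for every $k\ge k_0$.

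The core of the argument is then simply to add these two identities and simplify using the relevant additivity facts on each side. On the left, the additivity of Chen ranks under direct products, equation \eqref{eq:thetapro} (a consequence of the direct-sum decomposition of the Chen Lie algebra in Lemma \ref{lem:ChenLie}), gives $\theta_k(G_1)+\theta_k(G_2)=\theta_k(G_1\times G_2)$. On the right, the additivity of the resonance-component counts, Lemma \ref{lem:productcom}, gives $h_m(G_1)+h_m(G_2)=h_m(G_1\times G_2)$ for every $m\ge 2$. Combining the two, for all $k\ge k_0$ one obtains
\[
\theta_k(G_1\times G_2)=\sum_{m\ge 2}\bigl(h_m(G_1)+h_m(G_2)\bigr)\theta_k(F_m)=\sum_{m\ge 2}h_m(G_1\times G_2)\,\theta_k(F_m),
\]
which is exactly the Chen ranks formula \eqref{eq:Chenranksformula} for the product.

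I do not anticipate any genuine obstacle: the substantive content has already been packaged into \eqref{eq:thetapro} and Lemma \ref{lem:productcom}, and the corollary follows by a one-line summation. The only point meriting attention is the bookkeeping on the range of $k$, namely that the product formula is guaranteed only for $k$ beyond the larger of the two individual thresholds; this is immediate. It is worth contrasting this with the coproduct case of Proposition \ref{prop:introproductChenformula}, where the clean summation breaks down: by Proposition \ref{prop:res} the first resonance variety of a free product acquires additional components coming from products of the factors' resonance loci, so the neat additivity of component counts used here no longer holds, and the Chen ranks formula may fail for $G_1\ast G_2$.
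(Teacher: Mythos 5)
Your proof is correct and follows exactly the paper's route: the paper also deduces the corollary immediately from the additivity of Chen ranks under products \eqref{eq:thetapro} and the additivity of resonance-component counts \eqref{eq:cm g1g2}. Your version merely spells out the threshold bookkeeping that the paper leaves implicit.
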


\begin{proof}
Follows at once from formulas \eqref{eq:thetapro} and \eqref{eq:cm g1g2}.  
\end{proof}

However, even if both $G_1$ and $G_2$ satisfy the Chen ranks formula,
the free product $G_1* G_2$ may not satisfy this formula. We illustrate 
this phenomenon with an infinite family of examples.

\begin{example}
\label{ex:chen ranks coprod}
Let $G_n=\Z* \Z^{n-1}$. Clearly, both factors of this free product satisfy the 
Chen ranks formula;  in fact, both factors satisfy the hypothesis of Theorem 
\ref{thm:Chenrankformula}.  
Moreover, $G_n$ is $1$-formal  and $\cR^1(G_n)$ is projectively disjoint and 
reduced as a scheme.  Using Theorem 4.1(3) and Lemma 6.2 from 
\cite{PS-mathann}, a short computation reveals that   
\begin{equation}
\label{eq:zztop}
\sum_{k\ge 2} \theta_k(G_n) t^k = t\, \frac{1-(1-t)^{n-1} }{(1-t)^n}.
\end{equation}

On the other hand, if $n\ge 2$,  then $\cR^1_1(G_n)= H^1(G_n, \C)$, 
by Proposition \ref{prop:res}. Thus, formula \eqref{eq:Chenranksformula} 
would say that $\theta_k(G_n)=\theta_k(F_n)$ for $k\gg 0$.  However, 
comparing formulas \eqref{eq:bfn} and \eqref{eq:zztop}, we find that 
\begin{equation}
\label{eq:fangs}
\theta_k(F_n)-\theta_k(G_n)=\sum_{i=2}^k\theta_i(F_{n-1}).
\end{equation}

Hence, if $n\ge 3$, the group $G_n$ does {\em not}\/ 
satisfy the Chen ranks formula.  Note that $G_n$ also does not satisfy 
the isotropicity hypothesis of Theorem \ref{thm:Chenrankformula}, 
since the restriction of the cup product to the factor $\Z^{n-1}$ is 
nonzero, again provided that $n\ge 3$. 
\end{example}

\subsection{Chen ranks of $vP_3$ and $\vP_3^+$}
\label{subsec:chen virtual}

We now return to the pure virtual braid groups, and study their Chen ranks. 
Recall that $\vP_2^+=\Z$ and $\vP_2=F_2$, so we may as well assume $n\ge 3$. 
We start with the case $n=3$.  

\begin{prop}
\label{prop:chenvp3+}
The groups $\vP_3^+$ and $\vP_3$ do not satisfy the Chen ranks formula, 
despite the fact that they are both $1$-formal, and their first resonance varieties 
are projectively disjoint and reduced as schemes.
\end{prop}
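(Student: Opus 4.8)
The plan is, for each of the two groups, to compute both sides of the Chen ranks formula \eqref{eq:Chenranksformula} and to exhibit a gap between them that persists for all large $k$; the ancillary claims about $1$-formality and about the resonance scheme are then recorded afterwards.

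I would dispose of $\vP_3^+$ first, since it is the group $G_3=\Z\ast\Z^2$ treated in Example~\ref{ex:chen ranks coprod}. Its first resonance variety is $\cR^1_1(\vP_3^+)=H^1(\vP_3^+,\C)=\C^3$, a single irreducible component of dimension $3$, so $h_3=1$ and $h_m=0$ otherwise, and the right-hand side of \eqref{eq:Chenranksformula} is $\theta_k(F_3)$. That example already produces the true Chen ranks together with the identity $\theta_k(F_3)-\theta_k(\vP_3^+)=\binom{k}{2}$, which is positive for every $k\ge 2$; hence the formula fails. The hypotheses to be flagged are supplied directly: $1$-formality by Lemma~\ref{thm:PV3formal}, and projective disjointness together with reducedness by Example~\ref{ex:chen ranks coprod} (with isotropicity being the property that fails, since the cup product on the $\Z^2$-factor is nonzero).

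For $\vP_3$, Proposition~\ref{prop:vP3} gives $\cR^1_1(\vP_3)=\C^6$, again a single irreducible component, now of dimension $6$, so $h_6=1$ and \eqref{eq:Chenranksformula} would force $\theta_k(\vP_3)=\theta_k(F_6)$ for $k\gg 0$. To compute the genuine Chen ranks I would exploit the decomposition $\vP_3\cong\overline{P}_4\ast\Z$ of Lemma~\ref{lem:isogroups} through a coproduct formula for the Hilbert series $\Theta_G(t):=\Hilb(\gr(B(G)),t)$ of the associated graded Alexander invariant, of the same type used in Example~\ref{ex:chen ranks coprod} (Theorem~4.1(3) and Lemma~6.2 of \cite{PS-mathann}). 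Writing $m_i=b_1(G_i)$, this formula takes the shape
\[
\Theta_{G_1\ast G_2}(t)=\frac{\Theta_{G_1}(t)}{(1-t)^{m_2}}+\frac{\Theta_{G_2}(t)}{(1-t)^{m_1}}+\frac{\bigl(1-(1-t)^{m_1}\bigr)\bigl(1-(1-t)^{m_2}\bigr)}{t^2(1-t)^{m_1+m_2}}.
\]
Substituting $\Theta_{\Z}=0$ with $m=1$, and $\Theta_{\overline{P}_4}(t)=5/(1-t)^2-1$ (which follows from \eqref{eq:bpn} and \eqref{eq:chenpn} together with $\theta_k(\overline{P}_4)=\theta_k(P_4)$ for $k\ge 2$, a consequence of $P_4\cong\overline{P}_4\times\Z$ and \eqref{eq:thetapro}), yields an explicit rational function $\Theta_{\vP_3}(t)$ whose only singularity is a pole of order $6$ at $t=1$ with leading coefficient $1$. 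Comparing against $\Theta_{F_6}(t)$, whose pole at $t=1$ is also of order $6$ but has leading coefficient $5$, gives $\theta_k(\vP_3)\sim k^5/120$ whereas $\theta_k(F_6)\sim k^5/24$, so the two disagree for all $k\gg 0$ and the Chen ranks formula fails for $\vP_3$ as well. The remaining hypotheses hold: $1$-formality is Lemma~\ref{thm:PV3formal}, projective disjointness is automatic for a single component, and reducedness follows from the same identification of $\cR^1_1(\vP_3)$ as a reduced linear subspace, while isotropicity again fails because the cup product on $H^1(\vP_3,\C)$ is nonzero.

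The main obstacle is the coproduct computation of $\Theta_{\vP_3}$: the factor $\overline{P}_4$ has a nontrivial Alexander invariant, so unlike the purely abelian factors of Example~\ref{ex:chen ranks coprod} the first two terms of the displayed formula genuinely contribute. I would establish and apply that formula either by quoting \cite{PS-mathann} directly, or by feeding the presentation of the infinitesimal Alexander invariant from \cite{Papadima-Suciu04} into $\fh(\vP_3)=\fh(\overline{P}_4)\sqcup\C$ — legitimate because $\vP_3$ is $1$-formal, so $\gr(B(\vP_3))\cong\fB(\fh(\vP_3))$ by Corollary~\ref{cor:dps iso}. Verifying that the cross term is exactly the displayed Koszul-type expression is the delicate step, and I would corroborate it against the known coproducts $F_2\ast\Z=F_3$ and $F_2\ast F_2=F_4$ before trusting the $\overline{P}_4\ast\Z$ output.
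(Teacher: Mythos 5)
Your treatment of $\vP_3^+$ coincides with the paper's (both defer to Example~\ref{ex:chen ranks coprod}), but for $\vP_3$ you take a genuinely different route. The paper obtains $\Hilb(\gr(B(\vP_3)),t)$ by running the Fox-calculus/Gr\"obner-basis algorithm of Remark~\ref{remk:comp} on a presentation of $\vP_3$ in Macaulay2, and then observes that the difference with $\Hilb(\gr(B(F_6)),t)$, displayed in \eqref{eq:hilbgrbp3}, is not a polynomial; you instead feed the decomposition $\vP_3\cong\overline{P}_4\ast\Z$ into a closed coproduct formula for the Hilbert series of associated graded Alexander invariants and finish by comparing leading asymptotics of the poles at $t=1$. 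Your formula does reproduce the paper's answer $(9-20t+15t^2-4t^4+t^5)/(1-t)^6$ from \eqref{eq:chenvp3} exactly (I checked the arithmetic), and the asymptotics $\theta_k(\vP_3)\sim k^5/120$ versus $\theta_k(F_6)\sim k^5/24$ are right, so the conclusion is sound. The one step you correctly flag as delicate is the only real gap: the coproduct formula is used in the paper (via \cite{PS-mathann}) only in the situation of Example~\ref{ex:chen ranks coprod}, where both factors have trivial Alexander invariant, and extending it to $\overline{P}_4\ast\Z$ requires an actual argument --- e.g.\ the Mayer--Vietoris sequence for the universal abelian cover of a wedge, which yields an extension of $I_1\otimes I_2$ by the induced modules $\C[H]\otimes_{\C[H_i]}B(G_i)$, together with a strictness/exactness statement for the $I$-adic associated graded, or equivalently (using $1$-formality of $\vP_3$ and Corollary~\ref{cor:dps iso}) a computation with the presentation of $\fB(\fh_1\ast\fh_2)$ from \cite{Papadima-Suciu04}. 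What your approach buys is a conceptual, machine-free derivation of \eqref{eq:chenvp3} that explains where each term of the numerator comes from; what the paper's approach buys is independence from any coproduct formula, at the cost of a black-box symbolic computation.
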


\begin{proof}
Recall that $\vP_3^+\cong \Z^2* \Z$. Thus, the claim for $\vP_3^+$ 
is handled by the argument from Example \ref{ex:chen ranks coprod}.  

Next,  recall that $\vP_3 \cong \overline{P}_4*\Z$.  
We know from Lemma \ref{thm:PV3formal} that $\vP_3$ 
is $1$-formal. Furthermore, we know from Proposition \ref{prop:vP3}
that $\cR_1^1(\vP_3)=H^1(\vP_3,\C)$.  Clearly, this variety is 
projectively disjoint and reduced as a scheme. 
Using the algorithm described in Remark \ref{remk:comp}, we find that 
the Hilbert series of the associated graded Alexander invariants of 
$\vP_3$ is given by
\begin{align}
\label{eq:chenvp3}
\Hilb (\gr(B(\vP_3)),t)&= (9-20t+15t^2-4t^4+t^5)/(1-t)^6. 
\end{align}

On the other hand, as noted above,  $\cR^1_1(\vP_3)=\C^6$.
Using \eqref{eq:bfn} and \eqref{eq:chenvp3}, we compute
\begin{align}
\label{eq:hilbgrbp3}
\Hilb (\gr(B(\vP_3)),t)-\Hilb (\gr(B(F_6)),t)&=(-6 + 6 t^3 - 5 t^4 + t^5)/(1 - t)^6. 
\end{align}
Since this expression is {\em not}\/ a polynomial in $t$, we conclude that 
formula \eqref{eq:Chenranksformula} does not hold for $\vP_3$, 
and this ends the proof.
\end{proof}

\begin{remark}
\label{rem:isotropicity}
The resonance varieties of $\vP_3^+$ and $\vP_3$ are not isotropic, 
since both groups have non-vanishing cup products stemming from the 
subgroups $\Z^2$ and $\overline{P}_4$, respectively. Thus, once again, 
the groups $\vP_3^+$ and $\vP_3$ illustrate the necessity of the isotropicity 
hypothesis from Theorem \ref{thm:Chenrankformula}.
\end{remark}

\subsection{Holonomy Chen ranks of $vP_n$ and $\vP_n^+$}
\label{subsec:holo chen virtual}
We conclude with a summary of what else we know about the Chen ranks 
of the pure virtual braid groups, as well as the Chen ranks of the 
respective holonomy Lie algebras and associated graded Lie algebras. 

\begin{prop}
\label{prop:ChenranksvP}
The following equalities of Chen ranks hold.
\begin{enumerate}
\item \label{q1}
$\theta_k(\fh(\vP_n^+))=\theta_k(\gr(vP_n^+))$ and 
$\theta_k(\fh(\vP_n))=\theta_k(\gr(vP_n))$, for all $n$ and $k$.
\item \label{q2}
$\theta_k(\fh(\vP_n^+))=\theta_k(vP_n^+)$  for $n\leq 6$ and all $k$.
\item \label{q3}
$\theta_k(\fh(\vP_n))=\theta_k(vP_n)$  for $n\leq 3$ and all $k$. 
\end{enumerate}
\end{prop}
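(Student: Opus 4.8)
The plan is to treat the three assertions in increasing order of difficulty, reserving genuine computation for the one case where the formality machinery does not apply. Part \eqref{q1} is immediate from Theorem \ref{thm:beer lee}: that result supplies isomorphisms of graded Lie algebras $\fh(\vP_n)\cong \gr(\vP_n)$ and $\fh(\vP_n^+)\cong \gr(\vP_n^+)$, and any such isomorphism carries the second derived subalgebra onto the second derived subalgebra, hence induces an isomorphism of the maximal metabelian quotients and matches the graded dimensions that define the Chen ranks degree by degree. (Equivalently, these groups are graded-formal, so this is part \eqref{th1} of Corollary \ref{cor:thetatilde}.) For part \eqref{q3}, I would invoke $1$-formality: by Theorem \ref{thm:intro formal} the group $\vP_n$ is $1$-formal for $n\le 3$, hence filtered-formal, so part \eqref{th2} of Corollary \ref{cor:thetatilde} gives $\theta_k(\vP_n)=\theta_k(\gr(\vP_n))$ for all $k$; combining this with part \eqref{q1} yields $\theta_k(\fh(\vP_n))=\theta_k(\gr(\vP_n))=\theta_k(\vP_n)$.

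The substance is in part \eqref{q2}. For $n\le 3$ the group $\vP_n^+$ is $1$-formal, so the argument for part \eqref{q3} applies verbatim. The interesting range is $n\in\{4,5,6\}$, where $\vP_n^+$ is \emph{not} filtered-formal by Corollary \ref{cor:pvn}; thus $\theta_k(\vP_n^+)=\theta_k(\gr(\vP_n^+))$ cannot be read off from Corollary \ref{cor:thetatilde}, and a priori the surjection $\varphi\colon \fB(\gr(\vP_n^+))\surj \gr(B(\vP_n^+))$ assembled from Propositions \ref{prop:gamma compare} and \ref{thm:gradedinf} need not be injective. By part \eqref{q1} it suffices to prove that $\varphi$ is an isomorphism, equivalently that $\Hilb(\fB(\gr(\vP_n^+)),t)=\Hilb(\gr(B(\vP_n^+)),t)$. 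Since $\vP_n^+$ is graded-formal, Proposition \ref{prop:alexinv compare} identifies the left-hand module with $\fB(\fh(\vP_n^+))$, whose Hilbert series I would compute from the finite presentation of the infinitesimal Alexander invariant recorded in \cite{Papadima-Suciu04} (see Remark \ref{rem:comp-bis}); the right-hand series I would compute from the associated graded Alexander invariant via the Fox-calculus and Gr\"obner-basis algorithm of Remark \ref{remk:comp}. Carrying out both in Macaulay2 \cite{M2} and matching the resulting rational functions establishes the equality of Chen ranks for \emph{all} $k$, not merely in a finite range, completing part \eqref{q2}.

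The main obstacle is computational rather than conceptual: for $n=6$ one has $H=\Z^{15}$, so both the presentation of $B(\vP_6^+)$ and the passage to its associated graded module take place over a polynomial ring in $15$ variables, where the Gr\"obner-basis and Hilbert-series computations are expensive. The conceptual subtlety worth flagging is that, precisely because $\vP_n^+$ fails to be filtered-formal for $n\ge 4$, the coincidence of the two Hilbert series is not forced by the general theory and must be verified directly; its success for $n\le 6$ is exactly the kind of evidence bearing on Question \ref{que:eq}.
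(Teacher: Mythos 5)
Your proposal is correct and follows essentially the same route as the paper: parts \eqref{q1} and \eqref{q3} (and \eqref{q2} for $n\le 3$) via graded-/filtered-formality and Corollary \ref{cor:thetatilde}, and the remaining cases $4\le n\le 6$ of \eqref{q2} by computing and matching the closed-form Hilbert series of $\fB(\fh(\vP_n^+))$ and $\gr(B(\vP_n^+))$ in Macaulay2 using the algorithms of Remarks \ref{remk:comp} and \ref{rem:comp-bis}. The paper records the resulting rational functions explicitly in \eqref{eq:chencom}, but the argument is the one you describe.
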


\begin{proof}
\eqref{q1}
Recall from Theorem \ref{thm:beer lee} that the pure virtual braid 
groups $vP_n$ and $vP_n^+$ are graded-formal.  Therefore, 
by Corollary \ref{cor:thetatilde}, claim \eqref{q2} holds. 

For $n\le 3$, claims \eqref{q2} and \eqref{q3} follow from the $1$-formality 
of the groups $\vP^+_n$ and $\vP_n$ in that range, and Corollary \ref{cor:thetatilde}. 

Using now the algorithms described in Remarks \ref{remk:comp} 
and \ref{rem:comp-bis}, a Macaulay2 \cite{M2} computation reveals that 
\begin{align}
\label{eq:chencom}
\notag
\sum_{k\geq 2} \theta_k(\vP_4^+) t^{k-2} &=\sum_{k\geq 2} \theta_k(\fh(\vP_4^+)) t^{k-2}
= (8-3t+t^2)/(1-t)^4,\\
\sum_{k\geq 2} \theta_k(\vP_5^+) t^{k-2} &=\sum_{k\geq 2} \theta_k(\fh(\vP_5^+)) t^{k-2}
=(20+15t+5t^2)/(1-t)^4,\\ \notag
\sum_{k\geq 2} \theta_k(\vP_6^+) t^{k-2} &=\sum_{k\geq 2} \theta_k(\fh(\vP_6^+)) t^{k-2}
=(40+35t-40t^2-20t^3)/(1-t)^5.  
\end{align}
This establishes claim \eqref{q2}  for $4\le n\le 6$, thereby completing the proof. 
\end{proof}

It would be interesting to decide whether the equalities in parts \eqref{q2} 
and \eqref{q3} of the above proposition hold for all $n$ and all $k$.

Finally, let us address the validity of the Chen ranks formula 
\eqref{eq:Chenranksformula} for the pure virtual braid groups on $n\ge 4$ strings.  
We know from Lemma \ref{lem:resonancePV4+} that $\cR_1^1(vP_4^+)$ 
has a single irreducible component of dimension $4$.
A similar Macaulay2 \cite{M2} computation shows that 
$\cR_1^1(vP_5^+)$ has $15$ irreducible components, all of dimension $4$.
Using now \eqref{eq:chencom}, it is readily seen 
that the Chen ranks formula does not hold for either $\vP_4^+$ or $\vP_5^+$. 
Based on this evidence, and some further computations, we expect that 
the Chen ranks formula does not hold for any of the groups $vP_n$ and 
$vP_n^+$ with $n\geq 4$.

\newcommand{\arxiv}[1]
{\texttt{\href{http://arxiv.org/abs/#1}{arXiv:#1}}}
\newcommand{\doi}[1]
{\texttt{\href{http://dx.doi.org/#1}{doi:#1}}}
\renewcommand*\MR[1]{%
\StrBefore{#1 }{ }[\temp]%
\href{http://www.ams.org/mathscinet-getitem?mr=\temp}{MR#1}}

\end{document}